\newtheorem{theorem}{Theorem}
\newtheorem{corollary}[theorem]{Corollary}
\newtheorem{lemma}[theorem]{Lemma}
\newtheorem{proposition}[theorem]{Proposition}
\newtheorem{definition}[theorem]{Definition}
\newtheorem{remark}[theorem]{Remark}
\numberwithin{theorem}{section}
\numberwithin{figure}{section}
\numberwithin{equation}{section}
\DeclareMathOperator{\cov}{cov}
\DeclareMathOperator{\dist}{dist}
\DeclareMathOperator{\SLE}{SLE}
\DeclareMathOperator{\CLE}{CLE}
\DeclareMathOperator{\ext}{ext}
\DeclareMathOperator{\hcap}{hcap}
\DeclareMathOperator{\diam}{diam}
\DeclareMathOperator{\simple}{simple}
\DeclareMathOperator{\Leb}{Leb}
\def\rm{\reversemarginpar}
\title{\bf On the convergence of FK-Ising percolation to $\SLE(16/3, 16/3-6)$}
{\small\author[1]{Christophe Garban\thanks{garban@math.univ-lyon1.fr. C.G. is supported by the 
ANR grant \textsc{Liouville} ANR-15-CE40-0013 and the ERC grant LiKo 676999.}}
\author[2]{Hao Wu\thanks{hao.wu.proba@gmail.com. H.W. is supported by the Thousand Talents Plan for Young Professionals. }}
\affil[1]{Institut Camille Jordan, Universit\'{e} Claude Bernard Lyon 1, France}
\affil[2]{Yau Mathematical Sciences Center, Tsinghua University, China}}
\date{}
\begin{document}

\newcommand{\eps}{\epsilon}
\newcommand{\ov}{\overline}
\newcommand{\U}{\mathbb{U}}
\newcommand{\T}{\mathbb{T}}
\newcommand{\HH}{\mathbb{H}}
\newcommand{\LA}{\mathcal{A}}
\newcommand{\LB}{\mathcal{B}}
\newcommand{\LC}{\mathcal{C}}
\newcommand{\LD}{\mathcal{D}}
\newcommand{\LF}{\mathcal{F}}
\newcommand{\LK}{\mathcal{K}}
\newcommand{\LE}{\mathcal{E}}
\newcommand{\LG}{\mathcal{G}}
\newcommand{\LL}{\mathcal{L}}
\newcommand{\LM}{\mathcal{M}}
\newcommand{\LQ}{\mathcal{Q}}
\newcommand{\LU}{\mathcal{U}}
\newcommand{\LV}{\mathcal{V}}
\newcommand{\LZ}{\mathcal{Z}}
\newcommand{\LH}{\mathcal{H}}
\newcommand{\R}{\mathbb{R}}
\newcommand{\C}{\mathbb{C}}
\newcommand{\N}{\mathbb{N}}
\newcommand{\Z}{\mathbb{Z}}
\newcommand{\E}{\mathbb{E}}
\newcommand{\PP}{\mathbb{P}}
\newcommand{\QQ}{\mathbb{Q}}
\newcommand{\A}{\mathbb{A}}
\newcommand{\one}{\mathbb{1}}
\newcommand{\bn}{\mathbf{n}}
\newcommand{\MR}{MR}
\newcommand{\cond}{\,|\,}
\newcommand{\la}{\langle}
\newcommand{\ra}{\rangle}
\newcommand{\tree}{\Upsilon}

\newcommand{\D}{\mathbb{D}}
\renewcommand{\H}{\mathbb{H}}
\newcommand{\Q}{\mathbb{Q}}
\renewcommand{\S}{\mathbb{S}}

\def\calA{\mathcal{A}}
\def\calB{\mathcal{B}}
\def\calC{\mathcal{C}}
\def\calD{\mathcal{D}}
\def\calE{\mathcal{E}}
\def\calF{\mathcal{F}}
\def\calG{\mathcal{G}}
\def\calL{\mathcal{L}}
\def\calH{\mathcal{H}}
\def\calI{\mathcal{I}}
\def\calN{\mathcal{N}}
\def\calS{\mathcal{S}}
\def\calT{\mathcal{T}}
\def\calW{\mathcal{W}}

\def\g{\mathrm{good}}
\def\ins{\mathrm{in}}
\def\ext{\mathrm{ext}}
\def\SLE{\mathrm{SLE}}
\def\CLE{\mathrm{CLE}}
\def\diam{\mathrm{diam}}
\def\rad{\mathop{\mathrm{rad}}}
\def\var{\mathop{\mathrm{Var}}}
\def\Var#1{\mathrm{Var}\bigl[ #1\bigr]}
\def\cov{\mathrm{Cov}}
\def\Cov#1{\mathrm{Cov}\bigl[ #1\bigr]}
\def\sign{\mathrm{sign}}
\def\dist{\mathrm{dist}}
\def\length{\mathop{\mathrm{length}}}
\def\area{\mathop{\mathrm{area}}}
\def\ceil#1{\lceil{#1}\rceil}
\def\floor#1{\lfloor{#1}\rfloor}
\def\Im{{\rm Im}\,}
\def\Re{{\rm Re}\,}

\def\de{\delta}

\def\Haus{\mathfrak H}

\def\P{\mathbb{P}} 
\def\E{\mathbb{E}} 
\def\md{\mid}

\def\Bb#1#2{{\def\md{\bigm| }#1\bigl[#2\bigr]}}
\def\BB#1#2{{\def\md{\Bigm| }#1\Bigl[#2\Bigr]}}

\def\Pb{\Bb\P}
\def\Eb{\Bb\E}
\def\PB{\BB\P}
\def\EB{\BB\E}
\def\FK#1#2#3{{\def\md{\bigm| } \P_{#1}^{\,#2}  \bigl[  #3 \bigr]}}
\def\EFK#1#2#3{{\def\md{\bigm| } \E_{#1}^{\,#2}  \bigl[  #3 \bigr]}}

\def \p {{\partial}}
\def\closure{\overline}
\def\ev#1{{\mathcal{#1}}}

\def\<#1{\langle #1\rangle}
\newcommand{\red}[1]{\textcolor{red}{#1}}
\newcommand{\blue}[1]{\textcolor{blue}{#1}}
\newcommand{\purple}[1]{\textcolor{purple}{#1}}

\def\nn{\nonumber}
\def\bi{\begin{itemize}}  
\def\ei{\end{itemize}}
\def\bnum{\begin{enumerate}} 
\def\enum{\end{enumerate}}
\def\ni{\noindent}
\def\bf{\bfseries}
\let\sc\scshape

\maketitle

\begin{abstract}

We give a simplified and complete proof of the convergence of the chordal exploration process in critical FK-Ising percolation to chordal $\SLE_\kappa( \kappa-6)$ with $\kappa=16/3$. Our proof follows the classical excursion-construction of $\SLE_\kappa(\kappa-6)$ processes in the continuum and we are thus led to introduce suitable cut-off  stopping times in order to analyse the behaviour of the driving function of the discrete system when Dobrushin boundary condition collapses to a single point.
Our proof is very different from \cite{KemppainenSmirnovBoundaryTouchingLoopsFKIsing, KemppainenSmirnovFullLimitFKIsing} as it only relies on the convergence to the chordal $\SLE_{\kappa}$ process in Dobrushin boundary condition and does not require the introduction of a new observable. Still, it relies crucially on several ingredients:
\bi
\item[a)] the powerful topological framework developed in \cite{KemppainenSmirnovRandomCurves} as well as its follow-up paper \cite{CDCHKSConvergenceIsingSLE},
\item[b)] the strong RSW Theorem from \cite{ChelkakDuminilHonglerCrossingprobaFKIsing},
\item[c)] the proof is inspired from the appendix A in \cite{BenoistHonglerIsingCLE}.
\ei

One important emphasis of this paper is to carefully write down some properties which are often considered {\em folklore} in the literature but which are only justified so far by hand-waving arguments. The main examples of these are:
\bi
\item[1)] the convergence of natural discrete stopping times to their continuous analogues. (The usual hand-waving argument destroys the spatial Markov property). 
\item[2)] the fact that the discrete spatial Markov property is preserved in the the scaling limit. (The enemy being that $\Eb{X_n \md Y_n}$ does not necessarily converge to $\Eb{X\md Y}$ when $(X_n,Y_n)\to (X,Y)$).
\ei 
We end the paper with a detailed sketch of the convergence to radial  $\SLE_\kappa( \kappa-6)$ when $\kappa=16/3$ as well as the derivation of Onsager's one-arm exponent $1/8$.
\end{abstract}
\newpage

\section{Introduction}

The random cluster model on a finite graph $\Omega=(V(\Omega), E(\Omega))\subset \Z^2$ is a probability measure on bond configurations $\omega=(\omega_e: e\in E(\Omega))\in \{0,1\}^{E(\Omega)}$: 
\[\phi_{p,q,\Omega}[\omega]\propto p^{o(\omega)}(1-p)^{c(\omega)}q^{k(\omega)},\]
where $o(\omega)$ (resp. $c(\omega)$) denotes the number of open edges (resp. closed edges) in $\omega$ and $k(\omega)$ denotes the number of clusters in $\omega$. This model was introduced by Fortuin and Kasteleyn in 1969 and this model is closely related to the Ising model and the Potts model. When $q\ge 1$, the model enjoys FKG inequality which makes it possible to consider the infinite volume measures of the model. For $q\ge 1$, there exists a critical value $p_c$ for each $q$ such that, for $p>p_c$, any infinite volume measure has an infinite cluster; whereas, for $p<p_c$, any infinite volume measure has no infinite cluster. This dichotomy does not tell what happens at criticality $p=p_c$ and the critical phase is of great interest. 
The value of $p_c$ depends on $q$: $p_c(q)=\sqrt{q}/(1+\sqrt{q})$ for $q\ge 1$. This result was proved for Bernoulli percolation ($q=1$) by Kesten in 1980, and was derived for $q=2$ by Onsager in 1944 using the connection with Ising model. It was proved for $q\ge 1$ in \cite{BeffaraDuminilCopinSelfDual2DRCM}.
When $q\in [1,4]$, the critical phase is believed to be conformally invariant and the interface at criticality is conjectured to converge to $\SLE_{\kappa}$ where 
\[\kappa=4\pi/\arccos(-\sqrt{q}/2).\]

Conformal invariance is proved for $q=2$ in the celebrated works  \cite{Stas, ChelkakSmirnovIsing} while the convergence to $\SLE_\kappa$ is proved in  \cite{CDCHKSConvergenceIsingSLE}. 
When $q=2$, the random-cluster model is also called FK-Ising percolation. Precisely, the conclusion proved in \cite{Stas,ChelkakSmirnovIsing, CDCHKSConvergenceIsingSLE} is the following: consider the critical FK-Ising percolation on a simply connected domain $\Omega$ with Dobrushin boundary condition, the interface converges in law to $\SLE_{\kappa}$.  What about the convergence with other boundary conditions? The simplest boundary condition after the Dobrushin one is the fully wired boundary condition. We will give detailed description of the interface with fully wired boundary condition in Section~\ref{subsec::pre_fk}. 
The convergence of the interface with fully wired boundary condition is the main topic of this article. 

\begin{theorem}\label{thm::cvg_interface_chordal}
Let $(\Omega; a,b,c)$ be either a Jordan domain or the upper half plane $\H$ with three marked points $a, b, c$ on its boundary. 
Let $(\Omega^{\delta}; a^{\delta}, b^{\delta}, c^\delta)$ be a sequence of discrete domains on $\delta\Z^2$ converging to  $(\Omega; a, b, c )$ in the Carath\'{e}odory sense. Then, as $\delta\to 0$, the exploration path of the critical FK-Ising model in the domain $(\Omega^{\delta}; a^{\delta}, b^{\delta})$ with Dobrushin wired$/$free boundary condition and targeted at $c^\delta$,  converges weakly to the chordal $\SLE_{\kappa}(\kappa-6)$ from $a$ to $c$ with force point at $b$ and with $\kappa=16/3$. The case $a\equiv b$ corresponds to an exploration path in a fully wired (or fully free) domain. 
\end{theorem}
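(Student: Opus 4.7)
The proof strategy follows the classical excursion construction of $\SLE_\kappa(\kappa-6)$ in the continuum (in the spirit of appendix~A of \cite{BenoistHonglerIsingCLE}): the chordal process $\SLE_\kappa(\kappa-6)$ from $a$ to $c$ with force point at $b$ is chordal $\SLE_\kappa$ in $(\Omega;a,b)$ ``re-targeted'' at $c$, and it agrees in law with chordal $\SLE_\kappa$ from $a$ to $b$ as long as the tip is away from $b$. Its discrete counterpart is built into the random-cluster model by the spatial Markov property: if I stop the FK-Ising exploration path before it comes close to $b^\delta$, the conditional law of its continuation is again an FK-Ising exploration in an updated (random) slit domain with updated Dobrushin-type boundary conditions. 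Combined with the already-established convergence of the Dobrushin exploration to chordal $\SLE_\kappa$ in \cite{CDCHKSConvergenceIsingSLE}, this suggests an inductive scheme along a family of ``safe'' stopping times $\tau_\eps^\delta$ that keep the interface at conformal distance $\eps$ from $b^\delta$.

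Concretely I would proceed in four steps. \textbf{(i)} Use the RSW estimates of \cite{ChelkakDuminilHonglerCrossingprobaFKIsing} inside the Kemppainen--Smirnov framework \cite{KemppainenSmirnovRandomCurves, CDCHKSConvergenceIsingSLE} to establish tightness of the discrete exploration paths and to ensure that every subsequential limit is a continuous curve with a well-defined Loewner driving function. \textbf{(ii)} For each $\eps>0$ introduce a cut-off stopping time $\tau_\eps^\delta$---for instance the first time the tip has conformal radius at most $\eps$ from $b^\delta$ in the slit domain, or some monotone variant chosen so that in the continuum it is an almost surely continuous functional of the curve---and, via the Dobrushin convergence of \cite{CDCHKSConvergenceIsingSLE}, identify the limiting law of the interface stopped at $\tau_\eps^\delta$ with that of $\SLE_\kappa$ in $(\Omega;a,b)$ stopped at its continuum analogue $\tau_\eps$. \textbf{(iii)} Invoke the discrete spatial Markov property at $\tau_\eps^\delta$ to re-express the continuation as an FK-Ising exploration in a smaller slit domain with appropriate Dobrushin data, apply step (ii) conditionally, and iterate. \textbf{(iv)} Let $\eps\downarrow 0$ and match the limit with the continuum excursion construction of $\SLE_\kappa(\kappa-6)$ from $a$ to $c$ with force point at $b$; the fully wired case $a=b$ is then obtained by first approximating a fully wired configuration by vanishing Dobrushin arcs $a^\delta,b^\delta$ and using Carath\'eodory continuity of the limiting law.

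The hard part is step (iii), and it is exactly what the abstract flags as ``folklore.'' Two independent obstacles collide. First, a naive hitting time (e.g.\ first entry of an $\eps$-ball around $b$) is only lower semi-continuous on the curve space, so $\tau_\eps^\delta$ need not converge to its continuum analogue---a point that is routinely brushed aside but which would destroy both the identification of the stopped law and the iteration. The cure is to design $\tau_\eps$ as a monotone, conformally natural functional, together with a perturbation in $\eps$ to avoid exceptional values. Second, even granting joint weak convergence $(X_\delta,Y_\delta)\to(X,Y)$, the conditional expectations $\mathbb{E}[X_\delta\md Y_\delta]$ need \emph{not} converge to $\mathbb{E}[X\md Y]$, yet the iteration requires exactly this in order to transfer the FK spatial Markov property to the limit. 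To bypass this, I would combine a Skorokhod coupling with uniform Carath\'eodory continuity of the Dobrushin $\SLE_\kappa$-measure in its input data, so that the discrete spatial Markov identity converges, for every fixed $\eps$, to the continuum spatial Markov identity for $\SLE_\kappa(\kappa-6)$. Once these two ``folklore'' steps are properly executed, the combination of RSW, Kemppainen--Smirnov, Dobrushin $\SLE_\kappa$-convergence, FK spatial Markov, and the continuum excursion construction yields Theorem~\ref{thm::cvg_interface_chordal}.
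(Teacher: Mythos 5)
Your overall strategy coincides with the paper's: tightness via RSW within the Kemppainen--Smirnov framework, a decomposition of the exploration into excursions delimited by cut-off stopping times at which the Dobrushin convergence and target-independence can be applied, a careful treatment of the convergence of stopping times and of conditional laws, and a limit $\eps\downarrow 0$. You also correctly isolate the two ``folklore'' obstructions, and your remedy for the first (conformally natural rather than Euclidean stopping times) matches the paper's use of the renormalized harmonic measure $\theta^n$ of the free arc.

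There is, however, a genuine gap in your step (iv). Knowing, for each fixed $\eps>0$, that the excursions of the interface away from the force point converge to $\SLE_{\kappa}$ excursions says nothing about the ``dust'' intervals during which the harmonic measure of the free arc stays below $\eps$, so ``matching with the continuum excursion construction'' as $\eps\downarrow 0$ does not close on its own. One must either control the dust uniformly in $\eps$ (the authors' original, more technical route) or, as the paper does, invoke Lemma~\ref{lem::approximate_bessel}: the law of the excursions above every level $\eps$, together with $\Leb\{t:\theta_t=0\}=0$ (which follows from the limit curve having a continuous driving function, Lemma~\ref{lem::chordal_evolution_zeroLeb}), characterizes $\theta$ as a Bessel process. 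A second omission: even once $\theta$ is identified, you still have to recover the Loewner driving function $W$ from $\theta$. The naive passage to the limit in the discrete identity $\theta^n(t)=W^n_t+\int_0^t 2\,ds/\theta^n(s)$ can fail even when $W^n\to W$ and $\theta^n\to\theta$ uniformly; the paper instead uses the deterministic integral equation of Lemma~\ref{lem::chordal_evolution_boundary_point}, which requires first proving $\Leb(\eta\cap\R)=0$ for the limit curve (Lemma~\ref{lem::processvsrhm}). Finally, the locally uniform convergence $\theta^n\to\theta$ of the renormalized harmonic measures, on which both the convergence of the stopping times and the identification of the excursions rest, is itself a non-trivial step (Section~\ref{s.RHM}) that your sketch takes for granted.
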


The same conclusion was also proved in \cite{KemppainenSmirnovBoundaryTouchingLoopsFKIsing}, but our proof is very different from the one there. In \cite{KemppainenSmirnovBoundaryTouchingLoopsFKIsing}, the authors constructed the so-called holomorphic observable for fully wired boundary condition which is a generalization of the observable constructed in \cite{ChelkakSmirnovIsing} for Dobrushin boundary condition; and then extract information from the observable to characterize the scaling limit. Our approach is different and it only relies on the convergence to the chordal $\SLE$ process and the powerful topological tool developed in \cite{KemppainenSmirnovRandomCurves}. This result also appears in \cite[Appendix~A]{BenoistHonglerIsingCLE} and our proof follows a similar strategy as in that appendix except full detail is given here.
\smallskip

In order to explain our approach, let us first describe the connection between $\SLE_{\kappa}(\kappa-6)$ and $\SLE_{\kappa}$. Fix $\kappa\in (4,8)$, the process $\SLE_{\kappa}(\kappa-6)$ is the Loewner chain (see Section~\ref{subsec::pre_chordalchain}) with the driving function $W$ which is the solution to the following SDE system:
\begin{equation}
\label{eqn::slekappaminussix_sde}
dW_t=\sqrt{\kappa}dB_t+\frac{(\kappa-6)dt}{W_t-V_t},\quad W_0=0;\quad dV_t=\frac{2dt}{V_t-W_t},\quad V_0=x\ge 0,
\end{equation}
where $B$ is a standard one-dimensional Brownian motion. 
The corresponding Loewner chain is called $\SLE_{\kappa}(\kappa-6)$ in $\HH$ from $0$ to $\infty$ with force point $x$. Set $\theta_t=V_t-W_t$, we find that $\theta_t/\sqrt{\kappa}$ is a Bessel process of dimension $3-8/\kappa$. Note that $\theta_t$ is the renormalized harmonic measure (see Section~\ref{s.RHM}) of the right side of $\eta[0,t]$ union $[0,x]$ seen from infinity.
 
As the process is invariant by scaling, one can define the process in any simply connected domain via conformal image. The process has the following special property---target-independence:
Suppose $(\Omega; a, b, c)$ is a simply connected domain with three distinct degenerate prime ends $a,b,c$ on the boundary in counterclockwise order. 
Then an $\SLE_{\kappa}(\kappa-6)$ in $\Omega$ from $a$ to $c$ with force point $b$, then, up to the disconnection time---the first hitting time of the boundary arc $\partial_{bc}$, it has the same law as an $\SLE_{\kappa}$ in $\Omega$ from $a$ to $b$, up to the disconnection time. 
This target-independent property allows us to decompose $\SLE_{\kappa}(\kappa-6)$ process into $\SLE_{\kappa}$ excursions as follows. 

Fix some cut-off $\eps>0$, define $T_1^{\eps}$ to be the first time that $\theta$ reaches $\eps$ and define $S_1^{\eps}$ to be the first time after $T_1^{\eps}$ that $\theta$ hits zero. Generally, define $T_{k+1}^{\eps}$ to be the first time after $S_k^{\eps}$ that $\theta$ reaches $\eps$ and define $S_{k+1}^{\eps}$ to be the first time after $T_{k+1}^{\eps}$ that $\theta$ hits zero. For $t>0$, suppose $g_t$ is the conformal map corresponding to the Loewner chain in the definition of $\eta$ and 
denote by $x_t$ the preimage of $V_t$ under $g_t$. Then, by the above target-independence, we see that, for each $k\ge 1$, the conditional law of $(\eta(t), T_k^{\eps}\le t\le S_k^{\eps})$ given $(\eta(t), t\le T_k^{\eps})$ is the same as $\SLE_{\kappa}$ in $\HH\setminus\eta[0,T_k^{\eps}]$ from $\eta(T_k^{\eps})$ to $x_{T_k^{\eps}}$ up to the disconnection time. In other words, the  conditional law of $(g_{T_k^{\eps}}(\eta(t)), T_k^{\eps}\le t\le S_k^{\eps})$ is the same as $\SLE_{\kappa}$ in $\HH$ from $0$ to $\eps$ up to the disconnection time. Roughly speaking, when $\kappa\in (4,8)$, $\SLE_{\kappa}(\kappa-6)$ can be constructed by concatenating a sequence of i.i.d. $\SLE_{\kappa}$ excursions (see Sections~\ref{subsec::pre_approximate_Bessel} and~\ref{subsec::pre_chordalchain}). In particular, we have the following decomposition of $\theta$: $\{(\theta(t)/\sqrt{\kappa}, T_k^{\eps}\le t\le S_k^{\eps})\}_k$ are i.i.d. Bessel excursions. Each of them is a Bessel process starting from $\eps$ and stopped at the first hitting time of zero.  

In our approach of proving Theorem~\ref{thm::cvg_interface_chordal}, we wish to follow the above excursion construction of $\SLE_{\kappa}(\kappa-6)$. Suppose the same setup as in Theorem~\ref{thm::cvg_interface_chordal} and suppose $\gamma^n$ is the interface in $\Omega^{\delta_n}$ from $a^{\delta_n}$ to $c^{\delta_n}$ where the lattice size $\delta_n\to 0$. Fix some conformal map $\phi^n: (\Omega^{\delta_n}; a^{\delta_n}, c^{\delta_n})\to (\HH; 0, \infty)$ (resp. $\phi: (\Omega; a, c)\to (\HH; 0, \infty)$) such that $\phi^n(b^{\delta_n})\to \phi(b)$. Denote by $\eta^n=\phi^n(\gamma^n)$. Denote by $W^n$ its driving function and $\theta^n$ the renormalized harmonic measure of the right side of $\eta^n[0,t]$ union $[0,\phi^n(b^{\delta_n})]$ seen from infinity.
The goal of Theorem~\ref{thm::cvg_interface_chordal} is to show the convergence of $\eta^n$ to $\eta\sim\SLE_{\kappa}(\kappa-6)$ in distribution. To this end, we first introduce stopping times $T_k^{n,\eps}$ and $S_k^{n,\eps}$ for $\eta^n$ which are the analogs of $T_k^{\eps}$ and $S_k^{\eps}$ for $\eta$, see Section~\ref{subsec::key_setup}. These stopping times decompose $\theta^n$ into excursions $\{(\theta^n(t), T_k^{n,\eps}\le t\le S_k^{n,\eps})\}_k$ and dusts $\{\theta^n(t), S_k^{n,\eps}\le t\le T_{k+1}^{n,\eps}\}_k$. Our strategy is as follows.
\begin{enumerate}
\item First, we argue that $\{\eta^n\}_n$ is tight, see details in Section~\ref{subsec::cvg_curves_chordal}. For any convergent subsequence, which we still denote by $\{\eta^n\}_n$, we know that the limiting process $\eta$ is a continuous curve with continuous driving function $W$. Moreover, $W^n\to W$ and $\eta^n\to \eta$ locally uniformly. The key ingredient in the first step is the topological framework developed in \cite{KemppainenSmirnovRandomCurves} and  Russo-Symour-Welsh bounds proved in \cite{ChelkakDuminilHonglerCrossingprobaFKIsing}. 
\item Second, we argue that $\theta^n\to \theta$ locally uniformly. This fact seems intuitive, but it is not as easy as one expects. We prove the convergence in Section~\ref{s.RHM}.
\item Third, we argue that the stopping times converge: $T_k^{n,\eps}\to T_k^{\eps}, S_k^{n,\eps}\to S_k^{\eps}$. Although we have $\eta^n\to \eta, W^n\to W$ and $\theta^n\to \theta$ locally uniformly, the convergence of the stopping times still requires certain technical works. One difficulty one faces is that one cannot rely on a stopping time for the limiting curve without possibly ruining the domain Markov property for the discrete exploration paths, see discussions in Remark~\ref{rem::ruiningMarkovproperty}. It will be proved in Section~\ref{ss.ST}. 
\item Fourthly, we use the convergence of the interface with Dobrushin boundary condition \cite{ChelkakSmirnovIsing} to conclude that, for each $k\ge 1$, the process $(\theta(t), T_k^{\eps}\le t\le S_k^{\eps})$ is a Bessel excursion and it is independent of $(\theta(t), t\le T_k^{\eps})$. There are several subtleties in this step. The first one is that, although $\theta^n\to \theta$, $T_k^{n,\eps}\to T_k^{\eps}$ and $S_k^{n,\eps}\to S_k^{\eps}$, we still need to control the processes on the intervals $[T_k^{n,\eps}\wedge T_k^{\eps}, T_k^{n,\eps}\vee T_k^{\eps}]$. The second one is that the Markov property of $\eta^n$ or $\theta^n$ does not pass to the limit $\eta$ or $\theta$ automatically. This is related to the convergence of the conditional distributions which can be quite delicate to conclude in general. See discussions in Section~\ref{subsec::cvg_oneBesselexcursion}.  
\item
Fifthly, we show that $\theta$ is a Bessel process. There are two natural ways to characterize the limiting process as a Bessel. They lead to the following two strategies A) and B):
\bi
\item[A)] Either we control the dusts $(\theta^n(t), S_k^{n,\eps}\le t\le T_{k+1}^{n,\eps})$ in a uniform way and argue that the dusts will disappear as $\eps\to 0$. In this strategy, the uniform control on the dusts requires certain technical works where the strong RSW \cite{ChelkakDuminilHonglerCrossingprobaFKIsing} plays an essential role.
\ei
This was our initial proof (see the unpublished manuscript \cite{Manuscript} for this approach) until we discovered \cite[Appendix A]{BenoistHonglerIsingCLE}. Their appendix made us realize that there is another way to characterize a Bessel process---Lemma~\ref{lem::approximate_bessel}---which leads to the following shorter strategy B) which we shall follow in this preprint.
\bi
\item[B)] Combining the previous step with the fact that $\Leb\{t: \theta(t)=0\}=0$, Lemma~\ref{lem::approximate_bessel} guarantees that $\theta$ is a Bessel process. 
\ei

\item Finally, we argue that $\eta$ is an $\SLE_{\kappa}(\kappa-6)$. In other words, we wish to argue that $W_t$ solves the SDE system~\eqref{eqn::slekappaminussix_sde} from the fact that $\theta$ is a Bessel process. Recall that $W$ is the driving function of $\eta$ and $\theta$ is defined as the renormalized harmonic measure of the right side of $\eta$. It is not immediate how to get information on $W$ out of $\theta$. Naively, the first trivial attempt is through the convergence from the discrete to the continuum, as one has in the discrete
\[\theta^n(t)=W_t^n+\int_0^t\frac{2ds}{\theta^n(s)}.\]
Combining with the facts that $W^n\to W$ and $\theta^n\to \theta$, it is tempting to conclude 
\begin{equation}
\label{eqn::theta_W}
\theta(t)=W_t+\int_0^t\frac{2ds}{\theta_s}. 
\end{equation}
However, it is not hard to find examples where the integral term does not necessarily converge as $W^n\to W$ and $\theta^n\to \theta$. In fact, the relation~\eqref{eqn::theta_W} still holds, but we prove it using the fact that $\eta$ is a continuous curve with continuous driving function $W$ and that $\eta$ satisfies the Russo-Symour-Welsh bounds. With~\eqref{eqn::theta_W} at hand, one can conclude that $\eta$ is indeed an $\SLE_{\kappa}(\kappa-6)$, see Section~\ref{subsec::cvg_besselprocess}.
\end{enumerate}

\medbreak
\textbf{\bf Acknowledgments:}
We wish to thank Vincent Beffara, Dimtry Chelkak, Antti Kemppainen and Avelio Sep\'{u}lveda, and Hugo Vanneuville for useful discussions.
We thank Jean-Christophe Mourrat for the proof of Lemma~\ref{lem::approximate_bessel}.  This work was carried out during visits of H.W. in Lyon funded by the ERC LiKo 676999. We thank an anonymous referee for helpful comments on the draft of this article. 

\section{Preliminaries}

\subsection{Approximate Bessel process}\label{subsec::bessel}
\label{subsec::pre_approximate_Bessel}
Let $(X_t, t\ge 0)$ be a Bessel process of dimension $d>0$. 
See \cite[Chapter XI]{RevuzYorMartBM} for the definition and properties. We focus in $d>1$ in this article. 
When $d>1$, it is a semimartingale and a strong solution to the SDE:
\begin{equation}\label{eqn::sde_bessel}
X_t=X_0+B_t+\frac{d-1}{2}\int_0^t\frac{ds}{X_s},
\end{equation}
where $B$ is a standard one-dimensional Brownian motion. When $d\in (1,2)$, it almost surely assumes the value zero on a nonempty random set with zero Lebesgue measure. Standard excursion theory (see e.g. \cite[Chapter IV]{BertoinLevyProcesses}) shows that if we decompose $X$ according to zero points, then it gives a Poisson point process of Bessel excursions of the same dimension.   

Fix $d\in (1,2)$, let $X=(X_t, t\ge 0)$ be a Bessel process of dimension $d$ starting from zero. 
We will decompose the process according to zero points. For $\eps>0$, define sequences of stopping times: set $S_0^{\eps}=0$, for $k\ge 0$,
\[T_{k+1}^{\eps}=\inf\{t>S_k^{\eps}: X_t\ge \eps\},\quad S_{k+1}^{\eps}=\inf\{t>T_{k+1}^{\eps}: X_t=0\}.\]
We know that 
\[\left(X_t, S^{\eps}_{k}\le t\le T^{\eps}_{k+1}\right),\quad k\ge 0\]
are i.i.d; and that
\[\left(X_t, T^{\eps}_{k}\le t\le S^{\eps}_{k}\right),\quad k\ge 0\]
are i.i.d and their common law is Bessel excursion of dimension $d$ starting from $\eps$ and stopped when it hits zero. It turns out the behavior of $X$ on the intervals $\cup_k (T_k^{\eps}, S^{\eps}_k)$ is sufficient to characterize the whole process under mild assumptions.

Precisely, suppose $(\tilde{X}_t, t>0)$ is a continuous process with the following properties. 
Set $\tilde{S}_0^{\eps}=0$. 
Let $\tilde{T}^{\eps}_1$ be the first time that the process exceeds $\eps$. 
After $\tilde{T}^{\eps}_1$, the process evolves according to~\eqref{eqn::sde_bessel} until it hits zero at time $\tilde{S}^{\eps}_1$. 
For $k\ge 0$, let $\tilde{T}^{\eps}_{k+1}$ be the first time after $\tilde{S}^{\eps}_k$ that the process exceeds $\eps$. After $\tilde{T}^{\eps}_{k+1}$, the process evolves according to~\eqref{eqn::sde_bessel} until it hits zero at time $\tilde{S}^{\eps}_{k+1}$.

\begin{lemma} \label{lem::approximate_bessel}
Fix $d\in (1,2)$. Suppose $\tilde{X}$ satisfies the following assumptions. 
\begin{enumerate}
\item[(1)] For $\eps>0$, for each $k\ge 1$, 
the processes 
$(\tilde{X}_t, t\le \tilde{T}_{k}^{\eps})$ and $(\tilde{X}_t, \tilde{T}_{k}^{\eps}\le t\le \tilde{S}_{k}^{\eps})$ 
are independent. 
\item[(2)] $\Leb\{t: \tilde{X}_t=0\}=0$ almost surely.
\end{enumerate}
Then the process $\tilde{X}$ is a Bessel process of dimension $d$. 
\end{lemma}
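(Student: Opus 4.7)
The plan is to compare $\tilde X$ with a genuine Bessel process $X$ of dimension $d$ via a time-change that excises the ``dust'' intervals $D^\eps := \bigcup_k [\tilde S_k^\eps, \tilde T_{k+1}^\eps]$. For each $\eps > 0$, I set $\tau^\eps_t := \int_0^t \mathbb{1}_{E^\eps}(u)\,du$ with $E^\eps := \bigcup_k [\tilde T_k^\eps, \tilde S_k^\eps]$, let $\sigma^\eps$ denote its right-continuous inverse, and define $\hat X^{\eps,\star}_s := \tilde X_{\sigma^\eps_s}$; I denote by $\hat X^\eps$ the analogous time-changed process built from a genuine Bessel process $X$ of dimension $d$ started at $0$.

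First I would establish that $\hat X^{\eps,\star} \stackrel{d}{=} \hat X^\eps$ at each fixed $\eps$. Assumption~(1), combined with the prescription that on each $[\tilde T_k^\eps, \tilde S_k^\eps]$ the process $\tilde X$ evolves according to~\eqref{eqn::sde_bessel}, implies that the excursions $(\tilde X_t)_{t \in [\tilde T_k^\eps, \tilde S_k^\eps]}$ for $k \ge 1$ are i.i.d., each distributed as a Bessel excursion of dimension $d$ from $\eps$ stopped at $0$. The strong Markov property of $X$ yields the same i.i.d.\ structure for the excursions of $X$ above $\eps$. Both $\hat X^{\eps,\star}$ and $\hat X^\eps$ arise as the image of such an i.i.d.\ sequence under the same deterministic concatenation rule (lay the excursions down one after another, with a jump from $0$ to $\eps$ at each boundary), so they share the same law on the Skorokhod path space.

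Next I would pass to the limit $\eps \to 0$. Since $\bigcap_{\eps > 0} D^\eps \subseteq \{u : \tilde X_u = 0\}$ and the sets $D^\eps$ are monotone decreasing in $\eps$, assumption~(2) together with monotone convergence yields $\Leb(D^\eps \cap [0,T]) \to 0$ almost surely; the identity $|t - \tau^\eps_t| = \Leb(D^\eps \cap [0,t])$ for $t \le T$ upgrades this to $\sup_{t \le T}|t - \tau^\eps_t| \to 0$, and hence $\sigma^\eps_s \to s$ uniformly on compacts. Combined with the continuity of $\tilde X$, this gives $\hat X^{\eps,\star} \to \tilde X$ locally uniformly almost surely, and a fortiori in distribution; the same reasoning applied to $X$ (whose zero set has Lebesgue measure $0$ a.s.) gives $\hat X^\eps \to X$ in distribution. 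In view of the previous step, $\tilde X \stackrel{d}{=} X$, so $\tilde X$ is a Bessel process of dimension $d$.

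The main obstacle I anticipate is the clean formulation of the distributional identity at fixed $\eps$: one must describe the concatenation rule as a single measurable map from sequences of Bessel-excursion paths to the Skorokhod space $D([0,\infty))$, and verify that both $\hat X^{\eps,\star}$ and $\hat X^\eps$ are the image of an i.i.d.\ sequence of Bessel excursions from $\eps$ under this common map. A secondary, purely analytic point is the upgrade from the Lebesgue-measure-zero assumption~(2) to uniform convergence of $\sigma^\eps$ on compact time intervals, which is delivered by the elementary uniform bound displayed above.
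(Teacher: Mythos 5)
Your proof is correct and follows essentially the same strategy as the paper's: both identify the excursions of $\tilde{X}$ above level $\eps$ as an i.i.d.\ sequence of Bessel excursions matching those of a genuine Bessel process $X$, and both use assumption~(2) to show that the remaining ``dust'' (the time spent at or below level $\eps$) is Lebesgue-negligible as $\eps \to 0$. The only difference is presentational: you excise the dust from both processes by a time change and compare the resulting concatenations, whereas the paper builds a hybrid process interleaving $\tilde{X}$'s dust intervals with $X$'s excursions and lets both time corrections vanish.
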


In the literature, this lemma seems to be well known. However, we could not find a proper reference for its proof. We include the following proof due to Jean-Christophe Mourrat for completeness.
\begin{proof}
Suppose $(X_t, t\ge 0)$ is a Bessel process of dimension $d$. We couple the two processes $X$ and $\tilde{Y}^{\eps}$ in the following way: for each $\eps>0$, for each $k\ge 0$, we set 
\begin{align*}
\tilde{Y}^{\eps}_{t+\tilde{S}_k^{\eps}}
=\tilde{X}_{t+\tilde{S}_k^{\eps}},
\quad & 0\le t\le \tilde{T}^{\eps}_{k+1}-\tilde{S}_k^{\eps};\\ 
\tilde{Y}^{\eps}_{t+\tilde{T}_{k+1}^{\eps}}=X_{t+T_{k+1}^{\eps}},
\quad & 0\le t\le S_{k+1}^{\eps}-T_{k+1}^{\eps}.
\end{align*}
In fact, $\tilde{Y}^{\eps}$ has the same law as $\tilde{X}$, and it coincides with $X$ on $[0,t]$ up to translation of time by an amount of at most $A_{\eps}(t)+\tilde{A}_{\eps}(t)$ where 
\[A_{\eps}(t):=\{s\le t: X_s\le \eps\},\quad \tilde{A}_{\eps}(t):=\{s\le t: \tilde{X}_s\le \eps\}.\] 
Suppose $\tilde{Y}$ is any sub-sequential limit of $\tilde{Y}^{\eps}$. Then $\tilde{Y}$ has the same law as $\tilde{X}$, and $\tilde{Y}\equiv X$ because, almost surely, 
\[\lim_{\eps\to 0}\Leb(A_{\eps}(t))=\Leb\{s\le t: X_s=0\}=0,\quad \lim_{\eps\to 0}\Leb(\tilde{A}_{\eps}(t))=\Leb\{s\le t: \tilde{X}_s=0\}=0.\]
These give that $\tilde{X}$ has the same law as a Bessel process of dimension $d$.
\end{proof}

\subsection{Chordal Loewner chain}
\label{subsec::pre_chordalchain}

Suppose that $K$ is a is compact subset of $\overline{\HH}$. We call $K$ an \textbf{$\HH$-hull} if $K=\overline{\HH\cap K}$ and $H=\HH\setminus K$ is simply connected. By Riemann's mapping theorem, there exists a unique  conformal map $\Psi$ from $H$ onto $\HH$ such that
$\Psi(z)=z+O(1/z)$, as $z\to\infty$.
In particular, there exists $a=a(K)\ge 0$ such that
\[\Psi(z)=z+2a/z+o(1/z),\quad\text{as }z\to\infty.\]
The quantity $a(K)$ is a non-negative increasing function of the set $K$, and we call it the \textit{half-plane capacity} of $K$ and denote it by $\hcap(K)$. 

We list some some estimates of the half-plane capacity which are useful in the later sections. For their proof, see for instance\cite[Lemma A.13]{KemppainenSmirnovRandomCurves}. 

\begin{lemma}\label{l.Hcap} $ $
\begin{itemize}
\item If $K\subset B(x,\eps)$ for some $x\in\R$, then $\hcap(K)\le \eps^2/2$. 
\item If $K\cap (\R\times\{\eps i\})\neq\emptyset$, then $\hcap(K)\ge \eps^2/4$.
\item If $K\subset [-l,l]\times[0,\eps]$, then $\hcap(K)\le \frac{l\eps}{2\pi}(1+o(1))$ as $\eps/l\to 0$.
\end{itemize}

\end{lemma}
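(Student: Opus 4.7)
My plan is to combine monotonicity of the half-plane capacity under inclusion---an immediate consequence of either the Laurent-expansion definition $\Psi_K(z) = z + 2\hcap(K)/z + o(1/z)$ or the probabilistic representation
\[
2\hcap(K) = \lim_{y\to\infty} y\,\E_{iy}\bigl[\Im B_\tau\bigr],
\]
where $B$ is planar Brownian motion and $\tau = \inf\{t>0:B_t \in \R\cup K\}$---with three explicit computations on canonical model hulls: the half-disk, the vertical slit, and the thin rectangle.

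For the first bound, I would translate so that $x=0$ and note that by monotonicity $\hcap(K) \le \hcap(\overline{B(0,\epsilon)}\cap\overline{\H})$. The Joukowski-type map $\Psi(z) = z + \epsilon^2/z$ conformally sends the complement of the half-disk inside $\H$ onto $\H$ (the semicircle $\{\epsilon e^{i\theta}:\theta\in(0,\pi)\}$ is sent to $[-2\epsilon, 2\epsilon]\subset\R$) and is already in the normalized form $z + 2a/z + o(1/z)$ with $2a = \epsilon^2$, so the half-disk has $\hcap = \epsilon^2/2$, which is the desired bound.

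For the second bound, after translation assume $i\epsilon \in K$. The natural comparison object is the vertical slit $L := \{0\}\times[0,\epsilon]$, whose mapping-out function $\Psi_L(z) = \sqrt{z^2 + \epsilon^2}$ has expansion $z + \epsilon^2/(2z) + O(|z|^{-3})$, so $\hcap(L) = \epsilon^2/4$. Transferring this into a lower bound on $\hcap(K)$ for an arbitrary hull containing $i\epsilon$ requires more care, since $K$ need not contain $L$ and monotonicity does not apply directly. The clean argument uses a Beurling-type projection inequality on the nonnegative harmonic function $\phi_K(z) := \Im z - \Im\Psi_K(z)$ (vanishing on $\R$ and equal to $\Im z$ on $\partial K$): Brownian motion from $iy$ with $y$ large is at least as likely to hit $K$ at height $\ge h$ as to hit $L$ at height $\ge h$ for every $h\le\epsilon$, and weighting by height yields $\phi_K(iy) \ge \phi_L(iy)$; multiplying by $y$ and letting $y\to\infty$ gives $\hcap(K)\ge\hcap(L)=\epsilon^2/4$.

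For the third bound, by monotonicity it suffices to bound $\hcap$ of the rectangle $R := [-l,l]\times[0,\epsilon]$. Since $\Im B_\tau = 0$ on $\{B_\tau \in \R\setminus R\}$ and $\Im B_\tau \le \epsilon$ on $\{B_\tau \in R\setminus\R\}$, we have $\E_{iy}[\Im B_\tau] \le \epsilon\cdot\P_{iy}[B_\tau \in R\setminus\R]$. Decomposing this hitting probability according to the first crossing of the horizontal line $\{\Im z = \epsilon\}$: by the Poisson kernel for the half-plane $\{\Im z > \epsilon\}$, the $x$-coordinate of this crossing lies in $[-l,l]$ with probability $\tfrac{2}{\pi}\arctan(l/(y-\epsilon))\sim\tfrac{2l}{\pi y}$ as $y\to\infty$ and $\epsilon/l\to 0$, in which case BM has just hit the top of $R$. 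A separate Poisson-kernel estimate from $(\pm x, \epsilon)$ with $|x|>l$ shows that paths whose first crossing falls outside $[-l,l]$ contribute only a subleading correction (both to the hitting probability and, when sharpened by the fact that such paths typically exit through the sides at height strictly below $\epsilon$, to the expectation), leading in the limit to the claimed asymptotic bound on $\hcap(R)$. The main obstacle in this plan is the lower bound in the second part: extracting the quantitative comparison $\phi_K \ge \phi_L$ requires an honest extremal principle (Beurling projection, extremal length, or a reflection argument paired with the maximum principle), whereas the two upper bounds reduce to direct computations with the mapping-out functions of classical model domains.
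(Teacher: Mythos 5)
The paper does not actually prove this lemma --- it quotes it from \cite[Lemma A.13]{KemppainenSmirnovRandomCurves} --- so the only question is whether your argument stands on its own. Your first bullet is fine. The genuine gap is in the second bullet: the layered comparison you propose, namely that Brownian motion from $iy$ is at least as likely to hit $K$ at height $\ge h$ as to hit the slit $L=\{0\}\times[0,\eps]$ at height $\ge h$ \emph{for every} $h\le \eps$, is false. Take $K=(\overline{B(0,\rho)}\cap\overline{\H})\cup(\{0\}\times[0,\eps])$ with $0<\rho<\eps$. Mapping out the half-disk by $z\mapsto z+\rho^2/z$ sends the protruding needle to the slit $\{0\}\times[0,\eps']$ with $\eps'=\eps-\rho^2/\eps$, and sends $K\cap\{\Im z\ge h\}=\{0\}\times[h,\eps]$ (for $h\in(\rho,\eps)$) to the part of that slit above height $h-\rho^2/h$. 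By conformal invariance of the renormalized harmonic measure one gets
$\lim_{y}\pi y\,\P_{iy}\bigl[B_{\tau_K}\in K,\ \Im B_{\tau_K}\ge h\bigr]=2\sqrt{\eps'^2-(h-\rho^2/h)^2}=2\sqrt{\eps^2-h^2+\rho^4(\eps^{-2}-h^{-2})}<2\sqrt{\eps^2-h^2}$,
which is the corresponding quantity for the bare slit $L$. The half-disk shields the needle, so every high layer of $K$ carries strictly \emph{less} harmonic measure than the same layer of $L$; only the integral over all $h$ is bounded below --- and that integrated inequality is precisely the statement to be proved, not an available tool. The standard proof is different: realize $K$ as the terminal hull $K_a$, $a=\hcap(K)$, of a capacity-parametrized Loewner chain driven by probability measures (or approximate $K$ by curve-generated hulls); the Loewner ODE gives $\partial_t(\Im g_t(z))^2=-4\int (\Im g_t(z))^2|g_t(z)-u|^{-2}\,d\mu_t(u)\ge -4$, hence $(\Im \Psi_K(z))^2\ge (\Im z)^2-4\hcap(K)$ for every unswallowed $z$, and letting $z$ approach a highest point of $K$ (where $\Im\Psi_K(z)\to 0$) yields $4\hcap(K)\ge \eps^2$.

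Two further points on the third bullet. Your own estimate, carried to the end, gives $2\hcap(K)\le \eps\cdot\frac{2l}{\pi}(1+o(1))$, i.e.\ $\hcap(K)\le \frac{l\eps}{\pi}(1+o(1))$ --- twice the bound you claim to recover --- and no argument can do better, since the same first-crossing computation gives $\hcap([-l,l]\times[0,\eps])\ge \eps\cdot\frac{1}{\pi}\arctan\frac{l}{y-\eps}\cdot y\to \frac{l\eps}{\pi}$. The constant $\frac{l\eps}{2\pi}$ in the statement is the one for a rectangle of width $l$, not $2l$ (a transcription slip in the paper, harmless for its applications, which only use the order of magnitude); you should not assert that your computation reaches it. Finally, your tail term for first crossings landing outside $[-l,l]$ is of order $\eps\log(l/\eps)/y$ rather than $O(\eps/y)$, because of the $\eps/s$ decay of the return probability from distance $s$; this is still $o(l/y)$ as $\eps/l\to 0$, so that part of the argument survives once written carefully.
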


Given a continuous function $W: [0,\infty)\to \R$, consider the solution for the following ODE: for $z\in \overline{\HH}$,
\[\partial_t g_t(z)=\frac{2}{g_t(z)-W_t},\quad g_0(z)=z.\]
This solution is well-defined up to the swallowing time 
\[T(z):=\inf\big\{t: \inf_{s\in [0,t]}|g_s(z)-W_s|>0\big\}.\]
For $t\ge 0$, define $K_t:=\{z\in\overline{\HH}: T(z)\le t\}$, then $g_t(\cdot)$ is the unique conformal map from $\HH\setminus K_t$ onto $\HH$ with the expansion $g_t(z)=z+2t/z+o(1/z)$ as $z\to\infty$. We call $(K_t, t\ge 0)$ the \textbf{chordal Loewner chain} corresponding to the driving function $(W_t, t\ge 0)$. 

We record three lemmas~\ref{lem::diamhull_deterministic} to~\ref{lem::chordal_evolution_boundary_point} about deterministic properties as follows, which will be useful later in the paper. 
\begin{lemma}\label{lem::diamhull_deterministic}
\cite[Lemma~2.1]{LawlerSchrammWernerLERWUST}.
There is a constant $C>0$ such that the following holds. Let $(W_t, t\ge 0)$ be a continuous driving function and $(K_t, t\ge 0)$ be the corresponding Loewner chain. Set 
\[k(t):=\sqrt{t}+\max\{|W(s)-W(0)|: s\in [0,t]\}.\]
Then, for all $t\ge 0$, we have 
\[C^{-1}k(t)\le\diam(K_t)\le Ck(t). \]
\end{lemma}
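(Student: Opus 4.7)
My plan is to prove the two inequalities separately via elementary Loewner-chain estimates. Normalize $W(0)=0$ and write $M=\max_{s\in[0,t]}|W_s|$, so $k(t)=\sqrt{t}+M$. I use two standard facts: $\hcap(K_s)=s$ (read off from $g_s(z)=z+2s/z+o(1/z)$), and a capacity-based bound of the form $|g_s(z)-z|\le C\,\hcap(K_s)/\dist(z,K_s)$ for $z\in\overline\HH\setminus K_s$. For the upper bound, I fix $R=5k(t)$ and claim that any $z\in\overline\HH$ with $|z|\ge R$ is not swallowed by time $t$. This is a bootstrap: as long as $|g_s(z)-W_s|\ge R/2$, the Loewner ODE $\partial_s g_s(z)=2/(g_s(z)-W_s)$ yields $|g_s(z)-z|\le 4s/R\le 4k(t)^2/R=4k(t)/5$, whence $|g_s(z)-W_s|\ge |z|-|W_s|-|g_s(z)-z|\ge R-k(t)-4k(t)/5>R/2$ with room to spare. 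So the putative cut-off time $\tau=\inf\{s\le t:|g_s(z)-W_s|\le R/2\}$ is never attained, $z\notin K_t$, and we conclude $K_t\subset\overline B(0,R)$, i.e.\ $\diam(K_t)\le 10\,k(t)$.

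For the lower bound I treat $\sqrt t$ and $M$ separately. Picking $x\in K_t$, one has $K_t\subset\overline B(x,\diam K_t)$, and the first bullet of Lemma~\ref{l.Hcap} gives $t=\hcap(K_t)\le\diam(K_t)^2/2$, hence $\diam K_t\ge\sqrt{2t}$. For the bound in $M$, assume without loss of generality $W_{s^*}=M>0$ for some $s^*\le t$. Set $D=\diam K_t$; the upper bound applied to all $K_s$, together with the fact that $W(0)=0$ lies on the base of $K_t$ (see below), places $K_{s^*}\subset K_t\subset\overline B(0,CD)$ for an absolute constant $C$. Evaluating the flow at the real point $z=2CD$, which stays at distance $\ge CD$ from every $K_s$ up to time $t$, the capacity estimate above yields $|g_{s^*}(z)-z|\le C''t/D$. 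Since $z>0$ lies to the right of $K_{s^*}$, monotonicity of the Loewner flow on $\R$ gives $g_{s^*}(z)>W_{s^*}=M$, so $M\le 2CD+C''t/D$; combining with $D\ge\sqrt{2t}$ from the previous step yields $M\le C'''D$. Putting the two lower bounds together, $\diam K_t\gtrsim \sqrt t+M=k(t)$.

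The main technical subtlety is the bootstrap in the upper bound: one cannot simply invoke ``$g_s$ is close to the identity'' without first checking that $g_s(z)$ does not drift into $W_s$ at some intermediate time, so one must track $\inf_{r\le s}|g_r(z)-W_r|$ via a continuity argument and keep the constants honest enough that the estimate on $|g_s(z)-W_s|$ closes strictly above $R/2$. A minor side issue is justifying that $W(0)$ really lies on (or is arbitrarily close to) the base of $K_t$, as needed in the lower bound on $M$; this follows from the upper bound applied to small times, since $K_s\subset\overline B(W(0),5k(s))$ collapses onto $\{W(0)\}$ as $s\to 0$, so any point of $K_t$ is within $5k(s)$ of $W(0)$ for arbitrarily small $s$, placing $W(0)$ in $K_t$ by closedness.
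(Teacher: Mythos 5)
Your proof is correct and follows the standard argument for this classical estimate (the paper itself gives no proof, only the citation \cite{LawlerSchrammWernerLERWUST}): an ODE bootstrap keeping $|g_s(z)-W_s|$ strictly above $R/2$ for the upper bound, and half-plane-capacity comparisons together with evaluating the flow at a real point to the right of the hull (where $g_{s^*}(z)-W_{s^*}>0$ simply because $z$ is never swallowed) for the lower bound. The only cosmetic points are that the center $x$ in the application of Lemma~\ref{l.Hcap} must be taken on $\R$ (take $x=W(0)\in\overline{K_t}$, which your last paragraph justifies), and that ``any point of $K_t$ is within $5k(s)$ of $W(0)$'' should read ``some point of $K_s\subset K_t$ is within $5k(s)$ of $W(0)$''.
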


\begin{lemma}\label{lem::chordal_evolution_zeroLeb}
\cite[Lemma 2.5]{MillerSheffieldIG1}.
Suppose that $\eta$ is a continuous path in $\overline{\HH}$ from 0 to $\infty$ that admits a continuous Loewner driving function $W$. Then $\Leb\{t: \eta(t)\in\R\}=0$. 
\end{lemma}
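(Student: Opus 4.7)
My plan is to argue by contradiction via a Lebesgue density argument. Set $A:=\{t\ge 0:\eta(t)\in\R\}$; since $\eta$ is continuous and $\R$ is relatively closed in $\overline{\HH}$, $A$ is closed. Suppose $\Leb(A)>0$ and fix a Lebesgue density point $t_0\in A$, so that for every $\epsilon>0$ there is $\delta_0>0$ with $\Leb(A^c\cap[t_0,t_0+\delta])\le\epsilon\delta$ for all $0<\delta\le\delta_0$. The goal is to exploit this density, together with continuity of $\eta$ and $W$, to trap the hull increment $K_{t_0+\delta}\setminus K_{t_0}$ in a rectangular box whose half-plane capacity, bounded via the third bullet of Lemma~\ref{l.Hcap}, is too small for the value $2\delta$ forced by the Loewner parametrization.

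I first pass to the Markov-shifted Loewner chain $\tilde g_s:=g_{t_0+s}\circ g_{t_0}^{-1}$, whose hull $\tilde K_s=g_{t_0}(K_{t_0+s}\setminus K_{t_0})$ satisfies $\hcap(\tilde K_\delta)=2\delta$, whose driving function is $\tilde W_s=W_{t_0+s}$, and whose generating curve is $\tilde\eta(s):=g_{t_0}(\eta(t_0+s))$. Since $g_{t_0}$ sends the unswallowed real boundary back to $\R$, $\tilde\eta(s)\in\R$ whenever $\eta(t_0+s)\in\R$, so the density property is preserved in the shifted picture. Applying Lemma~\ref{lem::diamhull_deterministic} to the shifted chain controls the horizontal extent,
\[
\diam(\tilde K_\delta)\le L_\delta:=C\bigl(\sqrt\delta+\omega_W(\delta)\bigr),
\]
where $\omega_W$ denotes the modulus of continuity of $W$ on a fixed compact interval.

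For the vertical extent, I decompose $A^c\cap(t_0,t_0+\delta)$ into its open connected components $(a_i,b_i)$; by the density estimate each has length at most $\epsilon\delta$, and on each one $\tilde\eta$ is an excursion into $\HH$ with endpoints on $\R$. Lemma~\ref{lem::diamhull_deterministic} applied to the doubly-shifted sub-chain starting at $a_i$ bounds the diameter of the sub-hull (in doubly-shifted coordinates) by $C(\sqrt{\epsilon\delta}+\omega_W(\epsilon\delta))$; pulling back under $\tilde g_{a_i}^{-1}$, whose local behaviour near the real boundary point $\tilde W_{a_i}$ is controlled because the tip $\tilde\eta(a_i)$ itself lies on $\R$, gives an analogous bound $H_{\epsilon,\delta}$ on the height of each excursion. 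Taking the supremum over excursions bounds the height of $\tilde K_\delta$ by $H_{\epsilon,\delta}$. Combining with the third bullet of Lemma~\ref{l.Hcap} applied to the thin enclosing box yields
\[
2\delta=\hcap(\tilde K_\delta)\le \frac{L_\delta\,H_{\epsilon,\delta}}{\pi}(1+o(1)),
\]
and dividing by $\delta$ and letting first $\delta\downarrow 0$ and then $\epsilon\downarrow 0$ along the density scale forces $2\le 0$, a contradiction. Hence $\Leb(A)=0$.

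I expect the main obstacle to be the quantitative height estimate in the third paragraph: the pullback by $\tilde g_{a_i}^{-1}$ near $\tilde W_{a_i}$ can suffer a square-root Koebe distortion whenever the hull has a slit meeting $\R$, which weakens the bound on $H_{\epsilon,\delta}$ and must be balanced carefully against the gain from the density estimate. One can handle this either with a sharper distortion estimate tailored to Loewner slits landing on $\R$, or by iterating the density argument across nested dyadic scales to absorb any polynomial loss; the measure-theoretic core of the argument, a Lebesgue density reduction on $A$, is unchanged in either implementation.
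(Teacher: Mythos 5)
The paper does not actually prove this statement: it is quoted directly from \cite[Lemma 2.5]{MillerSheffieldIG1}, so there is no in-paper argument to compare yours with. Your overall shape (Lebesgue density point, Markov shift, trapping the hull increment in a thin box and contradicting $\hcap(\tilde K_\delta)=2\delta$ via the third bullet of Lemma~\ref{l.Hcap}) is reasonable, and the horizontal bound $\diam(\tilde K_\delta)\le L_\delta$ is fine. But the height estimate is a genuine gap, and it is worse than the ``distortion to be balanced'' you describe. The Loewner equation gives $\partial_s\,\mathrm{Im}(\tilde g_s(z))=-2\,\mathrm{Im}(\tilde g_s(z))/|\tilde g_s(z)-\tilde W_s|^2\le 0$, so $\tilde g_{a_i}$ \emph{decreases} imaginary parts and $\tilde g_{a_i}^{-1}$ \emph{increases} them; the only general control in the useful direction is the maximum-principle bound $\mathrm{Im}(z)-\mathrm{Im}(\tilde g_{a_i}(z))\le \mathrm{height}(\tilde K_{a_i})$, and $\mathrm{height}(\tilde K_{a_i})$ is a priori comparable to $\mathrm{height}(\tilde K_\delta)$, the very quantity you are bounding — the argument is circular. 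There is no Koebe-type distortion estimate available \emph{at} the boundary point $\tilde W_{a_i}$ (Koebe is an interior statement), and the geometry of $\tilde K_{a_i}$ near its tip is uncontrolled. Moreover, even the square-root loss you anticipate could not be absorbed by the density gain: you need $H_{\eps,\delta}=o(\delta/L_\delta)=o(\sqrt\delta)$, whereas $(\sqrt{\eps\delta})^{1/2}=\eps^{1/4}\delta^{1/4}\gg\sqrt\delta$ for fixed $\eps$ as $\delta\downarrow 0$.

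A second, independent problem sits in the final limit. Both $L_\delta$ and your claimed $H_{\eps,\delta}$ carry the modulus of continuity of $W$, so the contradiction requires $\omega_W(\delta)\,\omega_W(\eps\delta)=o(\delta)$ for fixed $\eps$, which essentially forces $W$ to be H\"older-$1/2$. A general continuous driving function — indeed already a Brownian one, by the law of the iterated logarithm — fails this for typical $\delta$, and the density theorem provides no rate $\eps=\eps(\delta)\to 0$ that would rescue the order of limits. So even granting the height bound, the inequality $2\delta\le L_\delta H_{\eps,\delta}/\pi\,(1+o(1))$ need not produce $2\le 0$. The measure-theoretic reduction is sound, but the geometric core — a height bound for the excursions measured in the singly-shifted coordinates that beats $\delta/L_\delta$ — is missing and is exactly the nontrivial content of \cite[Lemma 2.5]{MillerSheffieldIG1}; as written the proof is incomplete.
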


\begin{lemma}\label{lem::chordal_evolution_boundary_point}
\cite[Lemma 3.3]{MillerSheffieldIG2}.
Suppose that $\eta$ is a continuous path in $\overline{\HH}$ from 0 to $\infty$ that admits a continuous Loewner driving function $W$. Let $(g_t)$ be the corresponding family of conformal maps. For each $t$, let $X_t$ be the right most point of $g_t(\eta[0,t])\cap\R$. If $\Leb(\eta\cap \R)=0$, then $X$ solves the integral equation
\begin{equation}\label{eqn::evolution_forcepiont_determinisitc}
X_t=\int_0^t\frac{2ds}{X_s-W_s},\quad X_0=0^+.
\end{equation}
\end{lemma}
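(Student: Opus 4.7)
The plan is to identify $X_t$ as the Loewner image of a moving boundary prime end, reduce the verification to the classical Loewner ODE on an open set of full measure, and conclude by a continuity argument. Set $a(t) := \sup(K_t \cap \R_{\geq 0})$, the right-most point of $\R_{\geq 0}$ absorbed by the hull at time $t$. Since $g_t$ extends continuously and monotonically to $\R \setminus K_t$, one has $X_t = g_t(a(t)^+)$, where $a(t)^+$ denotes the prime end approached from the right on $\R$. The function $a$ is non-decreasing and can strictly increase only when $\eta$ actually touches $\R$; the hypothesis $\Leb(\eta \cap \R)=0$ then rules out any continuous-growth part of $a$, so $a$ has only countably many (jump) discontinuities, and the complement of its jump set in $[0,t]$ is a countable union of open intervals on which $a$ is constant.

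On each such constancy interval $(l,r)$, with value $a_0 := a(l) \geq 0$, the quantity $X_t = g_t(a_0^+)$ is the Loewner flow of a fixed prime end and satisfies $\partial_t X_t = \frac{2}{X_t - W_t}$. The crucial step is to prove continuity of $X$ at each jump time $t_0$ of $a$. Writing $a_- := a(t_0^-) < a_+ := a(t_0) = \eta(t_0) \in \R_{\geq 0}$, one argues on both sides. As $t \uparrow t_0$, the prime end $a_-^+$ is being swallowed together with the interval $[a_-,a_+]$, so applying the Loewner equation to real points $x \in (a_-,a_+)$ (all having the common swallowing time $t_0$) and using monotonicity of $g_t$ yields $g_t(a_-^+) - W_t \to 0$, hence $X_{t_0^-} = W_{t_0}$. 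At time $t_0$, the tip $\eta(t_0)=a_+$ is simultaneously the right endpoint of the freshly swallowed interval, and the right prime end of $\HH \setminus K_{t_0}$ at $a_+$ coincides with the tip's prime end in the unbounded component, so $g_{t_0}(a_+^+) = W_{t_0}$, that is $X_{t_0} = W_{t_0}$. Hence $X$ is continuous across every jump of $a$ and equals $W_{t_0}$ there.

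Finally, set $f(t) := X_t - \int_0^t \frac{2\,ds}{X_s - W_s}$. The integrand is locally integrable: $X - W$ is bounded below on any compact set disjoint from the countable touch-time set, and near each touch time the Loewner ODE forces the square-root decay $X_s - W_s \sim c\sqrt{|s - t_0|}$, making $1/(X-W) \in L^1_{\mathrm{loc}}$. Hence $f$ is continuous; by the ODE on each constancy interval of $a$, $f$ is locally constant on the complement of the countable jump set of $a$; a continuous function locally constant off a countable set is constant, and $f(0)=X_0=0$ then forces $f \equiv 0$. This delivers the integral equation (the initial condition $X_0 = 0^+$ being recorded by $X_t \downarrow 0$ as $t \downarrow 0$). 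The main obstacle is the continuity step at the jump times of $a$: establishing both $g_t(a_-^+) \to W_{t_0}$ (the Loewner-swallowing behavior) and $g_{t_0}(a_+^+) = W_{t_0}$ (the coincidence of prime ends at a new touch point) requires careful prime-end analysis of $\HH \setminus K_{t_0}$.
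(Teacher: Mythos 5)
The paper offers no proof of this lemma: it is quoted directly from \cite[Lemma 3.3]{MillerSheffieldIG2}, so your attempt has to be judged on its own. Your skeleton (write $X_t=g_t(a(t)^+)$ with $a(t)=\sup(K_t\cap\R_{\ge 0})$, run the Loewner ODE on intervals where $a$ is constant, patch across touch times) is the natural one, and your observation that the attained values of $a$ lie in $\eta\cap\R$ does correctly show that all of the increase of $a$ occurs through jumps. But the structural conclusion you draw from this --- that the complement of the jump set of $a$ is a countable union of open intervals of constancy --- is false, and it fails exactly for the curves to which this lemma is applied. A pure-jump nondecreasing function can have jump times accumulating on an uncountable perfect set: for $\SLE_\kappa(\kappa-6)$ with $\kappa\in(4,8)$, the set of times at which $\eta$ touches $\R_{\ge 0}$ at a new rightmost point is governed by the zero set of a Bessel process of dimension in $(1,2)$, which is uncountable, has measure zero, and has no isolated points. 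At a two-sided accumulation point of the jump set, $a$ is continuous but not locally constant. Hence your function $f$ is locally constant only off $\overline{J}$ (the closure of the jump set), which is an uncountable closed null set, and the principle you invoke (``continuous and locally constant off a countable set implies constant'') is unavailable; moreover ``continuous and locally constant off a closed null set'' does \emph{not} imply constant (Cantor function). Controlling the increase of $f$ across $\overline{J}$ is precisely the content of the lemma, and it is where the hypothesis $\Leb(\eta\cap\R)=0$ must enter quantitatively rather than only through the qualitative statement that $a$ is pure-jump.

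A second gap: the claimed square-root decay $X_s-W_s\sim c\sqrt{|s-t_0|}$ near a touch time is unjustified --- $W$ is only continuous and no such rate holds in general. (On an isolated constancy interval $(l,r)$ integrability is automatic from the ODE itself, since $\int_l^r 2\,ds/(X_s-W_s)=X_r-X_l<\infty$; the real issue is again the accumulation of touch times, where one must show the sum of these increments over infinitely many gaps is finite and accounts for all of $X_T$.) Your continuity analysis at an isolated jump ($X_{t_0^-}=X_{t_0}=W_{t_0}$) is fine. A workable repair is closer to the argument in \cite{MillerSheffieldIG2}: fix $T$, flow the single prime end $a(T)^+$ for all $t\le T$, obtaining $g_t(a(T)^+)=a(T)+\int_0^t 2\,ds/\bigl(g_s(a(T)^+)-W_s\bigr)$ by monotone convergence from unswallowed real points, and then compare $g_s(a(T)^+)$ with $X_s$ using $0\le g_s(a(T)^+)-X_s\le a(T)-a(s)$ (harmonic measure from infinity of an unswallowed real interval is at most its length), so that the hypothesis $\Leb(\eta\cap\R)=0$ lets you dispose of both the additive term $a(T)$ and the discrepancy in the integrand.
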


\textbf{Chordal $\SLE_{\kappa}$} is the chordal Loewner chain $K$ with driving function $W=\sqrt{\kappa}B$ where $B$ is a one-dimensional Brownian motion. For $\kappa>0$, $\SLE_{\kappa}$ is almost surely a continuous transient curve: there exists a continuous curve $\eta$ in $\HH$ from $0$ to $\infty$ such that $\HH\setminus K_t$ is the unbounded connected component of $\HH\setminus\eta[0,t]$ for all $t$. When $\kappa\in [0,4]$, the curve is simple; when $\kappa\in (4,8)$, the curve is self-touching; and when $\kappa\ge 8$, the curve is space-filling. (See \cite{LawlerConformallyInvariantProcesses} and references therein). 

\textbf{Chordal $\SLE_{\kappa}(\rho)$} with force point $V_0=x\in\R$ is the chordal Loewner chain with driving function $W$ solving the following SDEs:
\begin{equation}\label{eqn::chordal_sle_sde}
dW_t=\sqrt{\kappa}dB_t+\frac{\rho dt}{W_t-V_t},\quad W_0=0; \quad dV_t=\frac{2dt}{V_t-W_t},\quad V_0=x. 
\end{equation}
For $\kappa>0$ and $\rho>-2$, define $\theta_t=V_t-W_t$. The process $\theta_t/\sqrt{\kappa}$ is a Bessel process of dimension $1+2(\rho+2)/\kappa>1$, hence $V_t-W_t$ is well-defined for all times. This implies the existence and uniqueness of the solution to~\eqref{eqn::chordal_sle_sde}. It is proved in \cite{MillerSheffieldIG1} that $\SLE_{\kappa}(\rho)$ with $\rho>-2$ is almost surely generated by continuous and transient curves. Suppose $(\Omega; a, b, c)$ is a simply connected domain $\Omega$ with three marked points (degenerate prime ends) $a, b, c$ on the boundary in counterclockwise order. We define $\SLE_{\kappa}(\rho)$ in $\Omega$ from $a$ to $c$ with force point $b$ as the image of $\SLE_{\kappa}(\rho)$ in $\HH$ from $0$ to $\infty$ with force point $1$ under the conformal map $\phi: (\HH; 0, 1, \infty)\to (\Omega; a, b, c)$. 
In this article, we are interested in $\SLE_{\kappa}(\kappa-6)$ as it has the following target-independent property. 

\begin{lemma}\label{lem::sle_targetindep}
\cite{SchrammWilsonSLECoordinatechanges}.
Suppose $\eta$ is an $\SLE_{\kappa}(\kappa-6)$ in $\Omega$ from $a$ to $c$ with force point $b$ and define $S$ to be the first time that $\eta$ hits the boundary arc $\partial_{bc}$, then $(\eta(t), 0\le t\le S)$ has the same law as an $\SLE_{\kappa}$ in $\Omega$ from $a$ to $b$ up to the first time that it hits the boundary arc $\partial_{bc}$.
\end{lemma}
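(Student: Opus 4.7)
The plan is to reduce to the half-plane via conformal invariance and then invoke the standard Schramm--Wilson $\SLE$ coordinate change. By the definitions in Section~\ref{subsec::pre_chordalchain}, it suffices to treat $(\Omega;a,b,c)=(\HH;0,x,\infty)$ with some $x>0$; then $\eta$ is driven by the system~\eqref{eqn::slekappaminussix_sde}, the arc $\partial_{bc}$ is $[x,\infty)$, and $S$ equals the swallowing time of the force point $x$. Introduce the Möbius automorphism $\varphi(z)=xz/(x-z)$ of $\HH$, which satisfies $\varphi(0)=0$, $\varphi(x)=\infty$, $\varphi(\infty)=-x$. Since by definition chordal $\SLE_\kappa$ in $(\HH;0,x,\infty)$ from $0$ to $x$ is the $\varphi^{-1}$-image of standard chordal $\SLE_\kappa$ in $\HH$ from $0$ to $\infty$, the lemma is equivalent to showing that $\tilde\eta:=\varphi(\eta)$ is standard chordal $\SLE_\kappa$ up to the image of $S$.

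To compute the driving function of $\tilde\eta$, let $g_t$ be the Loewner chain of $\eta$ with driving pair $(W_t,V_t)$, let $\tilde g_{\,\cdot}$ be the hydrodynamically normalized Loewner chain of $\tilde\eta$, and set
\[
h_t\;:=\;\tilde g_{s(t)}\circ\varphi\circ g_t^{-1}:\HH\to\HH,
\]
where $s(t)$ is the induced time change. Because $\tilde g_{\,\cdot}$ fixes $\infty$ hydrodynamically and $\varphi$ sends the force-point trace $V_t=g_t(x)$ to $\infty$, $h_t$ has its unique pole at $V_t$, so it takes the explicit form $h_t(z)=A_t+B_t/(V_t-z)$ with $A_t\in\R$ and $B_t>0$. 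A direct differentiation then yields the crucial identity
\[
\frac{h_t''(W_t)}{h_t'(W_t)}\;=\;\frac{2}{V_t-W_t}.
\]
The driving function of $\tilde\eta$ is $\tilde W_{s(t)}=h_t(W_t)$ with time change $s'(t)=h_t'(W_t)^2$, and a Laurent-expansion comparison of the coupled Loewner PDEs (obtained by differentiating $h_t\circ g_t=\tilde g_{s(t)}\circ\varphi$ in $t$ and matching singular and constant terms at $g_t(z)=W_t$) yields the standard chain-rule identity $(\partial_t h_t)(W_t)=-3\,h_t''(W_t)$. Plugging these ingredients together with the SDEs~\eqref{eqn::slekappaminussix_sde} into Itô's formula for $h_t(W_t)$ produces
\[
d\tilde W_{s(t)}\;=\;\sqrt\kappa\,h_t'(W_t)\,dB_t\;+\;\frac{\kappa-6}{2}\Big(h_t''(W_t)+\frac{2\,h_t'(W_t)}{W_t-V_t}\Big)\,dt,
\]
whose drift bracket vanishes identically by the crucial identity above --- this is precisely the algebraic reason $\rho=\kappa-6$ is selected by target independence and no other $\rho$ is. After the time change, $\tilde W$ is therefore $\sqrt\kappa$ times a standard Brownian motion up to $s(S)$, so $\tilde\eta$ on $[0,s(S)]$ has the law of standard chordal $\SLE_\kappa$, which is the desired conclusion.

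The main obstacle is the Itô bookkeeping: the Laurent-expansion derivation of $(\partial_t h_t)(W_t)=-3h_t''(W_t)$ from the coupled Loewner PDEs for $g_t$ and $\tilde g_{s(t)}$, and the careful assembly of the three drift contributions (the $\sqrt\kappa\,dB_t$ Itô correction, the force-point drift of $dW_t$, and the explicit time derivative of $h_t$) into the form above so that the cancellation becomes visible. One must also verify that the coordinate identification is valid precisely up to $S$: as soon as $\eta$ swallows $x$, the force-point trace $V_t$ ceases to be real-valued, $\tilde\eta$ reaches its target $\varphi(\infty)=-x$, and the Möbius map $h_t$ degenerates, which pinpoints $S$ as exactly the right stopping time in the statement.
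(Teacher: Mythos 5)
Your proposal is correct and is precisely the standard Schramm--Wilson coordinate-change argument that the paper invokes via its citation (the paper gives no proof of its own for this lemma). The reduction to $(\HH;0,x,\infty)$, the explicit Möbius form $h_t(z)=A_t+B_t/(V_t-z)$ giving $h_t''(W_t)/h_t'(W_t)=2/(V_t-W_t)$, the chain-rule identity $(\partial_t h_t)(W_t)=-3h_t''(W_t)$, and the resulting cancellation of the drift for $\rho=\kappa-6$ all check out.
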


Suppose $\eta$ is an $\SLE_{\kappa}(\kappa-6)$ in $\HH$ from $0$ to $\infty$ with force point $x\ge 0$. Then the process $\theta_t=V_t-W_t$ is the renormalized harmonic measure (see Definition~\ref{d.RHM}) of the right side of $\eta[0,t]$ union $[0,x]$ seen from infinity. On the other hand, we find 
\[d\theta_t=-\sqrt{\kappa}dB_t+\frac{(\kappa-4)dt}{\theta_t}.\]
Thus the process $\theta_t/\sqrt{\kappa}$ is a Bessel process of dimension $3-8/\kappa$. Note that, 
\[3-8/\kappa\in (1,2),\quad\text{when}\quad \kappa\in (4,8).\]

\subsection{Convergence of curves: the chordal case}
\label{subsec::cvg_curves_chordal}
In this section, we recall the main result of \cite{KemppainenSmirnovRandomCurves}. Let $X$ be the set of continuous oriented unparameterized curves, that is, continuous mappings from $[0,1]$ to $\C$ modulo reparameterization. We equip $X$ with the metric 
\begin{equation}\label{eqn::metric_space_curves}
d_X(\gamma_1, \gamma_2)=\inf_{\varphi_1, \varphi_2}\sup_{t\in [0,1]}|\gamma_1(\varphi_1(t))-\gamma_2(\varphi_2(t))|,
\end{equation}
where the infimum is over all increasing homeomorphisms $\varphi_1, \varphi_2: [0,1]\to [0,1]$. The topology on $(X, d_X)$ gives rise to a notion of weak convergence for random curves on $X$. 

We call $(\Omega; a, b)$ a Dobrushin domain if $\Omega$ is a bounded simply connected domain $\Omega$ with two distinct degenerate prime ends $a, b$ on the boundary.
We denote by $\partial_{ab}$ or $(ab)$ the boundary arc of $\partial\Omega$ from $a$ to $b$ in counterclockwise order. For instance, the unit disc $\U$ with two boundary points $(\U; -1, +1)$ is a Dobrushin domain. 

Let $X_{\simple}(\Omega; a, b)$ be the collection of continuous simple curves in $\Omega$ from $a$ to $b$ such that they only touch the boundary $\partial\Omega$ in $\{a, b\}$. In other words,  $X_{\simple}(\Omega; a, b)$ is the collection of continuous simple curves $\gamma$ such that
\[\gamma(0)=a,\quad \gamma(1)=b, \quad \gamma(0,1)\subset\Omega.\]  

Let $X_0(\Omega; a, b)$ be the closure of the space $X_{\simple}(\Omega; a, b)$ in the metric topology $(X, d_X)$. We often consider some reference sets $X_0(\U; -1, +1)$ and $X_0(\HH; 0, \infty)$ where the latter can be understood by extending the above definition to curves defined on the Riemann sphere.

Since choral $\SLE$ is invariant under scaling, we can define chordal $\SLE$ in $(\Omega; a, b)$ via conformal image: suppose $\phi$ is any conformal map from $\HH$ onto $\Omega$ that sends $0,\infty$ to $a, b$, we define chordal $\SLE$ in $\Omega$ from $a$ to $b$ by the image of chordal $\SLE$ in $\HH$ from $0$ to $\infty$ by $\phi$. Note that $\SLE_{\kappa}$ is in $X_{\simple}(\Omega; a, b)$ almost surely when $\kappa\le 4$  and it is in $X_0(\Omega; a, b)$ almost surely when $\kappa>4$. 

We call $(Q; x_1, x_2, x_3, x_4)$ a \textbf{quad} if $Q$ is simply connected subset of $\C$ with four distinct boundary points $x_1, x_2, x_3, x_4$. The four points are in counterclockwise order. We denote by $d_{Q}((x_1x_2), (x_3x_4))$ the extremal distance between $(x_1x_2)$ and $(x_3x_4)$ in $Q$. 
We say that a curve $\gamma$ crosses $Q$ if there exists a subinterval $[s,t]$ such that $\gamma(s,t)\subset Q$ and $\gamma[s,t]$ intersects both $(x_1x_2)$ and $(x_3x_4)$. 

For any curve $\gamma\in X_0(\Omega; a, b)$ and any time $\tau$,  define $\Omega(\tau)$ to be the connected component of $\Omega\setminus\gamma[0,\tau]$ with $b$ on the boundary. Consider a quad $(Q; x_1, x_2, x_3, x_4)$ in $\Omega(\tau)$ such that $(x_2x_3)$ and $(x_4x_1)$ are contained in $\partial \Omega(\tau)$. We say that $Q$ is \textbf{avoidable} if it does not disconnect $\gamma(\tau)$ from $b$ in $\Omega(\tau)$.    

\begin{definition}\label{def::c2_chordal}
Suppose $\{(\Omega_n; a_n, b_n)\}_n$ is a sequence of Dobrushin domains. For each $n$, suppose $\PP_n$ is a probability measure supported on $X_0(\Omega_n; a_n, b_n)$. We say that the collection $\Sigma(M)=\{\PP_n\}_n$ satisfies \textbf{Condition C2} if there exists a constant $M>0$ such that for any $\PP_n\in\Sigma(M)$, any stopping time $0\le \tau\le 1$, and any avoidable quad $(Q; x_1, x_2, x_3, x_4)$ of $\Omega_n(\tau)$ such that $d_{Q}((x_1x_2), (x_3x_4))\ge M$, we have
\[\PP_n[\gamma[\tau, 1] \text{crosses }Q\cond \gamma[0,\tau]]\le 1/2.\] 
\end{definition}

For a probability measure $\PP$ on curves in $\Omega$, let $\phi$ be a conformal map defined on $\Omega$. We denote by $\phi\PP$ the pushforward of $\PP$ by $\phi$. 
For the Dobrushin domain $(\Omega_n; a_n, b_n)$, let $\psi_n$ be any conformal map from $(\Omega_n; a_n, b_n)$ onto $(\U; -1, +1)$. 
Given the family $\Sigma(M)$ as above, define the family
\[ \Sigma_{\U}(M)=\{\psi_n\PP_n: \PP_n\in\Sigma(M)\}.\]

\begin{theorem}\label{thm::chordal_loewner_cvg}
If the family $\Sigma(M)$ satisfies Condition C2, then the family $\Sigma_{\U}(M)$ is tight in the topology induced by~\eqref{eqn::metric_space_curves}. 
Suppose $\PP_{\infty}$ is a limiting measure of the family $\Sigma_{\U}(M)$, then the following statements hold $\PP_{\infty}$ almost surely.
\begin{enumerate}
\item[(1)] There exists $\beta>0$ such that $\gamma$ has a H\"{o}lder-continuous parametrization for the H\"{o}lder exponent $\beta$. 
\item[(2)] The tip $\gamma(t)$  of the curve lies on the boundary of the connected component of $\U\setminus\gamma[0,t]$ with $1$ on the boundary for all $t$.
\item[(3)] The curve $\gamma$ is transient: i.e. $\lim_{t\to 1}\gamma(t)=1$. 
\end{enumerate}
Suppose $\gamma_n\sim\PP_n$ and let $\phi_n$ be any conformal map from $(\Omega_n; a_n, b_n)$ onto $(\HH; 0, \infty)$. We parameterize 
$\eta_n:=\phi_n(\gamma_n)$ by the half-plane capacity.
Let $W_n$ be the driving process of $\eta_n$. Then 
\begin{enumerate}
\item[(4)] $\{W_n\}_n$ is tight in the metrizable space of continuous function on $[0,\infty)$ with the topology of local uniform convergence. 
\item[(5)] $\{\eta_n\}_n$ is tight in the metrizable space of continuous function on $[0,\infty)$ with the topology of local uniform convergence.
\end{enumerate}  
Moreover, if the sequence converges in any of the topologies (4) and (5) above it also converges in the other topology
and the limits agree in the sense that the limiting curve is driven by the limiting driving process. 
\end{theorem}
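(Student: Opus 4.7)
\medskip
\noindent\textbf{Proof proposal.} My plan is to follow the Kemppainen--Smirnov strategy, with Condition C2 used as the sole probabilistic input. The backbone of the argument is the translation of C2 into an annular crossing estimate which is strong enough to give both tightness in the unparameterized topology $(X,d_X)$ and H\"older continuity of subsequential limits.

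First I would establish tightness of $\Sigma_{\U}(M)$. Fix a quad $Q$ consisting of a thin topological annulus of extremal distance at least $M$, avoidable in the sense of Definition~\ref{def::c2_chordal}. Define stopping times $\tau_1,\tau_2,\dots$ at which the curve $\gamma$ makes successive crossings of $Q$. At each step, either $Q$ has become non-avoidable (in which case the curve is geometrically trapped) or C2 applies and gives conditional crossing probability at most $1/2$. Iterating yields $\PP_n[\gamma\text{ makes }k\text{ crossings of }Q]\le 2^{-k}$. Shrinking the thin annulus and applying conformal invariance of extremal distance, this quantitative estimate shows that for each $\eps>0$ there exists $\delta>0$ such that uniformly in $n$
\[
\PP_n\bigl[\exists\,s<t\in[0,1]:\ |\gamma(s)-\gamma(t)|\ge\eps,\ \diam(\gamma[s,t])\le\delta\bigr]\le\eps.
\]
This is the standard equicontinuity criterion for tightness in $(X,d_X)$; tightness of $\Sigma_{\U}(M)$ follows.

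Next I would verify properties (1)--(3) of any subsequential limit $\PP_\infty$. The iterated C2 bound actually gives $2^{-k}$ decay rather than just some decay, which upgrades to a polynomial bound on the modulus of continuity of the form $\PP_n[\text{osc}_\delta(\gamma)\ge \delta^\beta]\le \delta^{c}$ for some $\beta,c>0$ after optimizing over scales; this passes to the limit by Fatou and yields (1). For (2), each pre-limit curve lies in $X_0(\U;-1,+1)$, so the tip of $\gamma_n[0,t]$ lies on the boundary of the $+1$-containing component of $\U\setminus\gamma_n[0,t]$ — a condition closed under $d_X$-convergence once one excludes the bad event that $\gamma$ develops a pinch point, itself controlled by the annular crossing estimate. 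For (3), transience follows by applying C2 to avoidable quads separating the tip from the target $+1$: the resulting $2^{-k}$ decay forces $\gamma(t)\to 1$ as $t\to 1$.

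Third, I would transfer to the half-plane-capacity topology. After mapping by $\phi_n$ and reparametrizing $\eta_n=\phi_n(\gamma_n)$ by half-plane capacity, Lemma~\ref{lem::diamhull_deterministic} gives
\[
C^{-1}\bigl(\sqrt{t-s}+\mathrm{osc}_{[s,t]}(W_n)\bigr)\le\diam(K_t^n\setminus K_s^n)\le C\bigl(\sqrt{t-s}+\mathrm{osc}_{[s,t]}(W_n)\bigr),
\]
so equicontinuity of $W_n$ on compacts is equivalent to equicontinuity of the hulls, and hence follows from the previous step (after transferring the crossing estimate through $\phi_n$ using conformal invariance of extremal distance). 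This gives tightness of $\{W_n\}$, and then of $\{\eta_n\}$, in the local-uniform topology.

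Finally, for the equivalence of topologies and agreement of limits: given a joint subsequential limit $(W_n,\eta_n)\to(W,\eta)$, I would show that $\eta$ is driven by $W$ in the Loewner sense. This uses Carath\'eodory convergence $\HH\setminus K_t^n\to\HH\setminus K_t$, itself a consequence of property (2) (tip on the boundary) which prevents sudden macroscopic swallowing, so $g_t^n\to g_t$ uniformly on compacts of $\HH\setminus K_t$, and the relation $W_t^n=g_t^n(\eta_n(t))$ passes to the limit. Conversely, given convergence of $W_n$ alone, the Loewner ODE solved by the $g_t^n$ combined with tightness in (5) pins down the limiting curve. The main obstacle is this last identification: one must rule out that the limit hulls have nontrivial pieces invisible to the driving function, and this is precisely where property (2) together with Lemma~\ref{lem::diamhull_deterministic} is essential.
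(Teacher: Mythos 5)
The paper does not prove this theorem itself: its ``proof'' is the one-line citation \cite[Theorem~1.5, Corollary~1.7, Proposition~2.6]{KemppainenSmirnovRandomCurves}, and your sketch is a faithful reconstruction of the strategy of that reference (Condition C2 iterated on stacked avoidable annuli gives power-law multiple-crossing bounds, hence Aizenman--Burchard-type tightness and H\"older regularity, then transfer to the capacity parameterization via the comparison between hull diameter and driving-function oscillation). So your approach coincides with the one the paper delegates to \cite{KemppainenSmirnovRandomCurves}; the only places your outline is substantially thinner than that reference are the non-degeneracy of the half-plane-capacity time change for the limit curve and the implication from driving-function convergence back to curve convergence, both of which require real work there.
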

\begin{proof}
\cite[Theorem~1.5, Corollary~1.7, Proposition~2.6]{KemppainenSmirnovRandomCurves}. 
\end{proof}

\begin{lemma}\label{lem::processvsrhm}
Assume the same as in Theorem~\ref{thm::chordal_loewner_cvg} and suppose $\{\eta_n\}_n$ is any convergent subsequence and $\eta$ is the limiting process. Then $\eta$ satisfies all the requirements in Lemma~\ref{lem::chordal_evolution_boundary_point} almost surely. 
\end{lemma}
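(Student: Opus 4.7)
The requirements of Lemma~\ref{lem::chordal_evolution_boundary_point} are that $\eta$ be a continuous path in $\overline{\HH}$ from $0$ to $\infty$, that it admit a continuous Loewner driving function, and that $\Leb(\eta\cap \R)=0$. The first two are immediate from parts (3)--(5) and the ``moreover'' statement of Theorem~\ref{thm::chordal_loewner_cvg} (after transferring the limit from $\U$ to $\HH$ via a conformal map sending $-1\mapsto 0$, $+1\mapsto \infty$), so the only item that requires work is the third. My plan is to prove it by the Tonelli identity
\[
\E\bigl[\Leb(\eta\cap \R)\bigr]=\int_\R \PP[x\in\eta]\,dx,
\]
after establishing that the integrand vanishes for almost every $x\in\R$.

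To control $\PP[x\in\eta]$ for fixed $x\in\R\setminus\{0\}$, the first step is a uniform discrete boundary one-arm estimate
\[
\sup_n \PP\bigl[\dist(\eta^n,x)<\eps\bigr]\le \rho_x(\eps),\qquad \rho_x(\eps)\xrightarrow[\eps\to 0]{}0.
\]
This is obtained from the strong boundary RSW for critical FK-Ising of~\cite{ChelkakDuminilHonglerCrossingprobaFKIsing} via the standard chain-of-disjoint-semi-annuli argument around $x$: for $\eta^n$ to reach $B(x,\eps)$ it must cross each semi-annulus $\bigl(B(x,2^{-k}r)\setminus B(x,2^{-(k+1)}r)\bigr)\cap \HH$ for $k=0,\dots, \lceil \log_2(r/\eps)\rceil$, and each such crossing produces a primal or dual arm whose probability is bounded away from $1$ uniformly in $n$ by boundary RSW and primal$/$dual duality.

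The second step is to pass this bound to the limit. Because $\gamma\mapsto \dist(\gamma,x)$ is $1$-Lipschitz on the metric space $(X,d_X)$ defined in~\eqref{eqn::metric_space_curves}, the set $\{\gamma:\dist(\gamma,x)<\eps\}$ is open. The portmanteau theorem for open sets applied to the weakly converging subsequence $\eta^n\to \eta$ yields
\[
\PP[\dist(\eta,x)<\eps]\le \liminf_n \PP[\dist(\eta^n,x)<\eps]\le \rho_x(\eps).
\]
Taking $\eps\to 0$ along a countable sequence gives $\PP[x\in\eta]=\PP[\dist(\eta,x)=0]=0$. Tonelli then forces $\E[\Leb(\eta\cap\R)]=0$, hence $\Leb(\eta\cap\R)=0$ almost surely.

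The hard part is the first step: neither Condition~C2 nor the continuity of the limit curve and its driving function (both furnished by Theorem~\ref{thm::chordal_loewner_cvg}) by themselves preclude a continuum curve with positive one-dimensional trace on $\R$, so the boundary RSW input of~\cite{ChelkakDuminilHonglerCrossingprobaFKIsing} is genuinely indispensable and must be transported to the continuum limit via weak convergence. The remaining ingredients (Tonelli, portmanteau, Lipschitz continuity of the distance functional) are essentially automatic.
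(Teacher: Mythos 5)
Your overall architecture (reduce to a one-point estimate $\PP[x\in\eta]=0$ for $x\neq 0$, then integrate over $x$) is the same as the paper's, and your step 2 (transferring the discrete estimate to the limit; the paper uses an a.s.\ coupling where you use portmanteau on the open set $\{\dist(\gamma,x)<\eps\}$) is fine. The problem is your step 1, and in particular your closing assertion that Condition C2 does not by itself preclude positive trace on $\R$ and that the FK-Ising RSW of \cite{ChelkakDuminilHonglerCrossingprobaFKIsing} is ``genuinely indispensable'' here. That assertion is wrong, and it matters: the lemma is stated under the hypotheses of Theorem~\ref{thm::chordal_loewner_cvg} only, i.e.\ under Condition C2, with no reference to FK-Ising. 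The paper obtains the required estimate directly from C2 via \cite[Proposition~2.6]{KemppainenSmirnovRandomCurves}, which upgrades C2 to $\PP[\eta_n\text{ crosses }Q]\le 10\exp(-\alpha' d_Q)$ for every avoidable quad $Q$ in $\HH$; applying this to the semi-annulus $A_x(2\eps,x/2)$, whose extremal distance is of order $\log(x/\eps)$, gives $\PP[\eta_n\text{ crosses }A_x(2\eps,x/2)]\le 10(\eps/x)^\alpha$ with no lattice-level input at all. Your proof, by contrast, does not prove the lemma as stated, only its instance for FK-Ising exploration paths.

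Even restricted to that instance, your step 1 has unaddressed difficulties. The semi-annuli you chain live in $\HH$ around $x\in\R$, while Theorem~\ref{thm::rsw_strong} applies to discrete quads in $\Omega^{\delta}$; their preimages under $(\phi^{\delta})^{-1}$ are distorted topological quads that must be approximated by genuine $\delta\Z^2$-quads with comparable discrete extremal distance (this is exactly what Lemma~\ref{l.tech} is for, and you never invoke it). Moreover, multiplying the per-annulus bounds requires conditioning on the configuration outside each annulus via the domain Markov property together with uniformity of RSW over all boundary conditions; this is standard but is precisely the content you would have to write out, and it reproduces by hand what \cite[Proposition~2.6]{KemppainenSmirnovRandomCurves} already delivers from C2 in one line. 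So: same skeleton as the paper, but the key estimate should be pulled from Condition C2 rather than re-derived from RSW at the lattice level, both for correctness with respect to the stated hypotheses and to avoid the discretization issues you have glossed over.
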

\begin{proof}
Theorem~\ref{thm::chordal_loewner_cvg} guarantees that $\eta$ is generated by a continuous curve with a continuous Loewner driving function $W$. We only need to check that $\eta\cap\R$ has zero Lebesgue measure. For convenience, we couple all $\eta_n$ and $\eta$ in the same space so that $\eta_n\to\eta$ and $W_n\to W$ locally uniformly almost surely. 

It is sufficient to prove there exists $\alpha>0$ such that, for all $x\in\R$ and $\eps>0$, we have 
\begin{equation}\label{eqn::C2_onepointestimate}
\PP[\eta\cap B(x,\eps)\neq\emptyset]\le 10(\eps/|x|)^{\alpha}. 
\end{equation}
With~\eqref{eqn::C2_onepointestimate} at hand, we see that $\PP[\eta\text{ hits }x]=0$ for all $x\in\R\setminus\{0\}$, thus $\eta\cap\R$ has zero Lebesgue measure. 
We only need to prove~\eqref{eqn::C2_onepointestimate} for $x\ge 4\eps$. Let $A_x(r, R)$ be the semi-annulus in $\HH$ with center at $x$ and inradius $r$ and outradius $R$. It is proved in \cite[Proposition~2.6]{KemppainenSmirnovRandomCurves} that Condition C2 implies the following property: there exists $\alpha'>0$ such that, for any avoidable quad $Q$ in $\HH$, 
\[\PP[\eta_n\text{ crosses }Q]\le 10\exp(-\alpha' d_Q((x_1x_2), (x_2x_4))).\]
We apply this property to $Q=A_x(2\eps, x/2)$, then there exists $\alpha>0$ such that 
\[\PP[\eta_n\text{ crosses }A_x(2\eps, x/2)]\le 10(\eps/x)^{\alpha}.\]
For $T>0$, denote by 
$\|\eta_n-\eta\|_{\infty, T}=\inf\{|\eta_n(t)-\eta(t)|: 0\le t\le T\}$.
Then we have
\begin{align*}
\PP[\eta[0,T]\cap B(x,\eps)\neq\emptyset]&\le \PP[\eta_n\text{ crosses }A_x(2\eps, x/2)]+\PP[\|\eta_n-\eta\|_{\infty, T}\ge \eps]\\
&\le 10(\eps/x)^{\alpha}+\PP[\|\eta_n-\eta\|_{\infty, T}\ge \eps].
\end{align*}
Let $n\to\infty$, we have 
\[\PP[\eta[0,T]\cap B(x,\eps)\neq\emptyset]\le 10(\eps/x)^{\alpha}. \]
Thus 
\begin{align*}
\PP[\eta\cap B(x,\eps)\neq\emptyset]&\le \PP[\eta[0,T]\cap B(x,\eps)\neq\emptyset]+\PP[\eta\cap B(x,\eps)\neq\emptyset, \eta[0,T]\cap B(x,\eps)=\emptyset]\\
&\le 10(\eps/x)^{\alpha}+\PP[\eta\cap B(x,\eps)\neq\emptyset, \eta[0,T]\cap B(x,\eps)=\emptyset]. 
\end{align*}
Let $T\to\infty$, as $\eta$ is transient, the second term goes to zero and we obtain~\eqref{eqn::C2_onepointestimate}. This completes the proof. 
\end{proof}

\subsection{Application to exploration paths of FK-Ising percolation}
\label{subsec::pre_fk}
Let us start by recalling some useful facts on FK-Ising percolation that we will need later in this paper. 
The reader may consult \cite{DCParafermionic} for general background on FK-Ising percolation.

We will consider finite subgraphs $\Omega=(V(\Omega), E(\Omega))\subset \Z^2$. For such a graph, we denote by $\partial \Omega$ the inner boundary of $\Omega$:
\[\partial \Omega=\{x\in V(\Omega): \exists y\not\in V(\Omega) \text{ such that } \{x,y\}\in E(\Z^2)\}.\]  
A configuration $\omega=(\omega_e: e\in E(\Omega))$ is an element of $\{0,1\}^{E(\Omega)}$. If $\omega_e=1$, the edge $e$ is said to be open, otherwise $e$ is said to be closed. The configuration $\omega$ can be seen as a subgraph of $\Omega$ with the same set of vertices $V(\Omega)$, and the set of edges given by open edges $\{e\in E(\Omega): \omega_e=1\}$.  

Given a finite subgraph $\Omega\subset \Z^2$, a boundary condition $\xi$ is a partition $P_1\sqcup \cdots\sqcup P_k$ of $\partial \Omega$. Two vertices are wired in $\xi$ if and only if they belong to the same $P_i$. The graph obtained from the configuration $\omega$ by identifying the wired vertices together in $\xi$ is denoted by $\omega^{\xi}$. Boundary conditions should be understood informally as encoding how sites are connected outside of $\Omega$. Let $o(\omega)$ and $c(\omega)$ denote the number of open can dual edges of $\omega$ and $k(\omega^{\xi})$ denote the number of maximal connected components of the graph $\omega^{\xi}$.  

The probability measure $\phi^{\xi}_{p,q,\Omega}$ of the \textit{random cluster model} model on $\Omega$ with edge-weight $p\in [0,1]$, cluster-weight $q>0$ and boundary condition $\xi$ is defined by 
\[\phi^{\xi}_{p,q,\Omega}[\omega]:=\frac{p^{o(\omega)}(1-p)^{c(\omega)}q^{k(\omega^{\xi})}}{Z^{\xi}_{p,q,\Omega}},\]
where $Z^{\xi}_{p,q,\Omega}$ is the normalizing constant to make $\phi^{\xi}_{p,q,\Omega}$ a probability measure. For $q=1$, this model is simply Bernoulli bond percolation. 

If all the vertices in $\partial \Omega$ are pairwise wired (the partition is equal to $\partial \Omega$), it is called \textit{wired boundary condition}. 
The random cluster model with wired boundary condition on $\Omega$ is denoted by $\phi^{1}_{p,q,\Omega}$. If there is no wiring between vertices in $\partial \Omega$ (the partition is composed of singletons only), it is said to be with dual-wired boundary condition or \textit{free boundary condition}. The random cluster model with free boundary condition on $\Omega$ is denoted by $\phi^{0}_{p,q,\Omega}$. 

We call critical \textit{FK-Ising} model the random cluster model with 
\[q=2, \quad p=p_c(2).\]
For $q=2$, we have a stronger version of the so-called RSW crossing estimates. Given a discrete quad $(Q; a, b, c, d)$, 
we denote by $d_{Q}((ab), (cd))$ the \textit{discrete extremal distance} between $(ab)$ and $(cd)$ in $Q$, see \cite[Section 6]{ChelkakRobustComplexAnalysis}. The discrete extremal distance is uniformly comparable to and converges to its continuous counterpart---the classical extremal distance (see e.g. \cite[Chapter~4]{AhlforsConformalInvariants}). 
\begin{theorem}\label{thm::rsw_strong}
\cite[Theorem 1.1]{ChelkakDuminilHonglerCrossingprobaFKIsing}. 
Fix $q=2$. For each $L>0$ there exists $c(L)>0$ such that, for any quad $(Q; a, b, c, d)$ and any boundary condition $\xi$, if $d_{Q}((ab), (cd))\le L$, then 
\[\phi^{\xi}_{p_c(2), 2, Q}\left[(ab)\leftrightarrow(cd)\right]\ge c(L).\]
\end{theorem}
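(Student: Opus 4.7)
The plan is to reduce the statement, via conformal invariance of the continuum extremal distance together with the uniform comparability of $d_Q$ to its continuum analogue, to a uniform lower bound on the left--right crossing probability in discrete rectangles of aspect ratio bounded by a function of $L$ only, uniformly in the boundary condition $\xi$. Concretely, one uniformises $(Q;a,b,c,d)$ onto such a rectangle with $(ab)$ and $(cd)$ mapped to its short sides, and transfers the crossing probability back via the comparability of discrete and continuum extremal distance; it thus suffices to bound from below, uniformly in $\xi$, the crossing probability in a bounded-aspect rectangle.

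First I would establish the estimate for ``favourable'' boundary conditions, namely Dobrushin (wired on one boundary arc, free on the complement) and the extremal cases $\xi=1$ (fully wired) and $\xi=0$ (fully free). For these, the classical RSW theory of Smirnov and Chelkak--Smirnov \cite{Stas,ChelkakSmirnovIsing}, built on the discrete holomorphic parafermionic observable and its convergence to a holomorphic limit, produces a lower bound $c_0(L)>0$ on the crossing probability. Monotonicity in boundary conditions (CBC) together with the FKG inequality gives the sandwich $\phi^0_{Q}\le \phi^\xi_{Q}\le \phi^1_{Q}$ stochastically, but this alone does \emph{not} yield the uniform lower bound for a fixed crossing event under a general $\xi$: a crossing event may be helped by wirings on one pair of sides while being hurt by wirings on the other pair, so the extremal upper/lower bounds are in the wrong direction.

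The central step, and the main obstacle, is to pass from favourable to arbitrary boundary conditions via a \emph{buffer} construction. I would enlarge $Q$ to $\widetilde{Q}\supset Q$ with a thin annular buffer $\widetilde{Q}\setminus Q$ of controlled shape, and use FKG to force inside the buffer a prescribed pattern made of open primal arcs and open dual arcs that shields the interior of $Q$ from the wiring structure encoded by $\xi$ on $\partial Q$. Self-duality of critical FK-Ising at $q=2$ is what permits both the primal and dual pieces of this pattern to be realised: each is built by concatenating a bounded (in terms of $L$ only) number of crossings of small sub-quads inside the buffer, each individual sub-quad crossing being controlled by the favourable-BC estimate of the previous step. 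This gives a uniform cost $c_1(L)>0$ for the shielding pattern; conditionally on it, the law inside $Q$ stochastically dominates, and is dominated by, measures with controlled boundary conditions, so a final application of the favourable-BC estimate delivers the desired $c(L)$.

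The key difficulty in executing the buffer construction is that an arbitrary partition $\xi$ of $\partial Q$ can identify distant boundary vertices through the wirings in a highly non-planar manner, and the cluster count $k(\omega^\xi)$ in the FK weight faithfully records all these identifications. What makes the argument robust is the observation that once the buffer carries the shielding pattern, the wirings of $\xi$ which remain effective inside $Q$ are only those ``seen'' across the shield; the non-planar wirings outside $Q$ are neutralised irrespective of their combinatorial structure. Combining this shielding with self-duality and the favourable-BC crossing estimates of \cite{ChelkakSmirnovIsing} then yields the claimed uniform bound $c(L)$ of Theorem~\ref{thm::rsw_strong}.
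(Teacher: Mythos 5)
First, note that the paper does not prove Theorem~\ref{thm::rsw_strong} at all: it is imported verbatim as \cite[Theorem~1.1]{ChelkakDuminilHonglerCrossingprobaFKIsing}, so there is no in-paper argument to compare against. Judged on its own merits, your proposal misidentifies where the difficulty lies, and the step you spend most effort on addresses a problem that is not there. The event $(ab)\leftrightarrow(cd)$ is connectivity \emph{inside $Q$ by open edges of $\omega$}; it is an increasing event, and since the free boundary condition is stochastically minimal among all $\xi$ (comparison between boundary conditions, $q\ge 1$), one has $\phi^{\xi}_{p_c,2,Q}[(ab)\leftrightarrow(cd)]\ge \phi^{0}_{p_c,2,Q}[(ab)\leftrightarrow(cd)]$ for every $\xi$. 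Your claim that ``a crossing event may be helped by wirings on one pair of sides while being hurt by wirings on the other pair'' is false: wirings can only help an increasing event. Uniformity in $\xi$ is therefore a one-line reduction to free boundary conditions, and the entire buffer/shielding construction is unnecessary. It is also not sound as written: a pattern mixing open primal arcs and open \emph{dual} arcs is neither increasing nor decreasing, so FKG cannot be used to lower-bound its probability by a product of individual arc probabilities; one would have to explore/condition sequentially and track the induced boundary conditions, which is exactly the kind of delicate bookkeeping your sketch elides.

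The genuine content of the theorem, which your proposal dispatches in one sentence, is the passage from rectangles to arbitrary discrete quads (``topological rectangles'') with only the discrete extremal distance $d_Q((ab),(cd))\le L$ controlled. You propose to ``uniformise $(Q;a,b,c,d)$ onto such a rectangle \dots and transfer the crossing probability back via the comparability of discrete and continuum extremal distance.'' But discrete crossing probabilities are not conformally invariant, and there is no a priori comparison between the crossing probability of a wildly shaped discrete quad and that of its uniformised rectangle; comparability of extremal distances tells you the two quads are conformally comparable, not that their FK-crossing probabilities are. Closing precisely this gap is the point of \cite{ChelkakDuminilHonglerCrossingprobaFKIsing}, whose proof goes through discrete complex analysis (a priori bounds on the fermionic observable and discrete harmonic measure estimates in general quads), not through classical RSW in rectangles plus uniformisation. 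As it stands, your argument would not establish the theorem.
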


\begin{figure}[ht!]
\begin{center}
\includegraphics[width=0.6\textwidth]{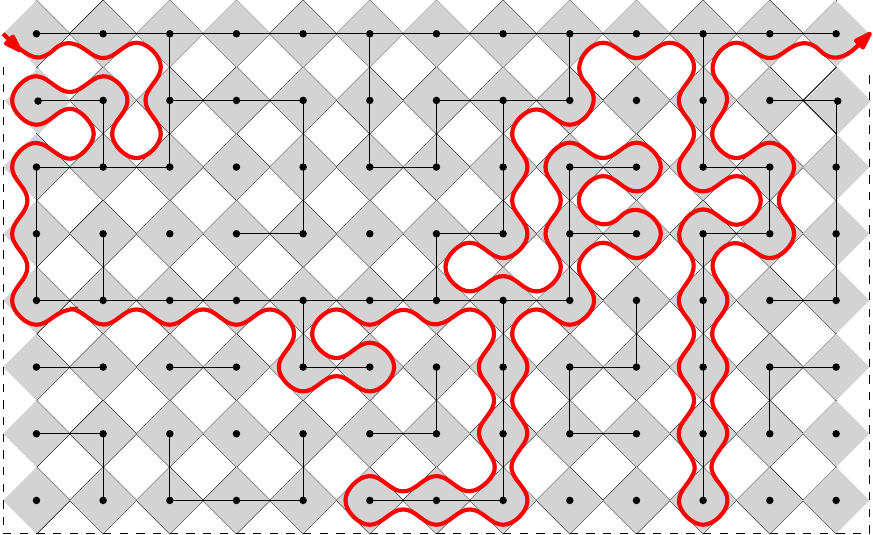}
\end{center}
\caption{\label{fig::fkinterface_Dobrushin}}
\end{figure}

The \textit{medial lattice} $(\Z^2)^{\diamond}$ is the graph with the centers of edges of $\Z^2$ as vertex set, and edges connecting nearest vertices. This lattice is a rotated and rescaled version of $\Z^2$. The vertices and edges of $(\Z^2)^{\diamond}$ are called medial-vertices and medial-edges. We identify the faces of $(\Z^2)^{\diamond}$ with the vertices of $\Z^2$ and $(\Z^2)^*$. A face of $(\Z^2)^{\diamond}$ is said to be black if it corresponds to a vertex of $\Z^2$ and white if it corresponds to a vertex of $(\Z^2)^*$. See more detail and figures in \cite[Section~3]{DCParafermionic}.

 Fix a Dobrushin domain $(\Omega; a, b)$ and consider a configuration $\omega$ together with its dual-configuration $\omega^*$. The \textit{Dobrushin boundary condition} is given by taking edges of $\partial_{ba}$ to be open and the dual-edges of $\partial_{ab}^*$ to be dual-open; in this case, we also say that the boundary condition along $\partial_{ba}$ is wired and the boundary condition along $\partial_{ab}$ is free. 
  Through every vertex of $\Omega^{\diamond}$, there passes either an open edge of $\Omega$ or a dual open edge of $\Omega^*$. Draw self-avoiding loops on $\Omega^{\diamond}$ as follows: a loop arriving at a vertex of the medial lattice always makes a $\pm\pi/2$ turn so as not to cross the open or dual open edges through this vertex. The loop representation contains loops together with a self-avoiding path going from $a^{\diamond}$ to $b^{\diamond}$, see Fig.~\ref{fig::fkinterface_Dobrushin}. This curve is called the \textit{exploration path} in $\Omega^{\diamond}$ from $a^{\diamond}$ to $b^{\diamond}$. For $\delta>0$, we consider the rescaled square lattice $\delta\Z^2$. The definitions of dual and medial Dobrushin domains extend to this context. 

\begin{figure}[ht!]
\begin{center}
\includegraphics[width=0.6\textwidth]{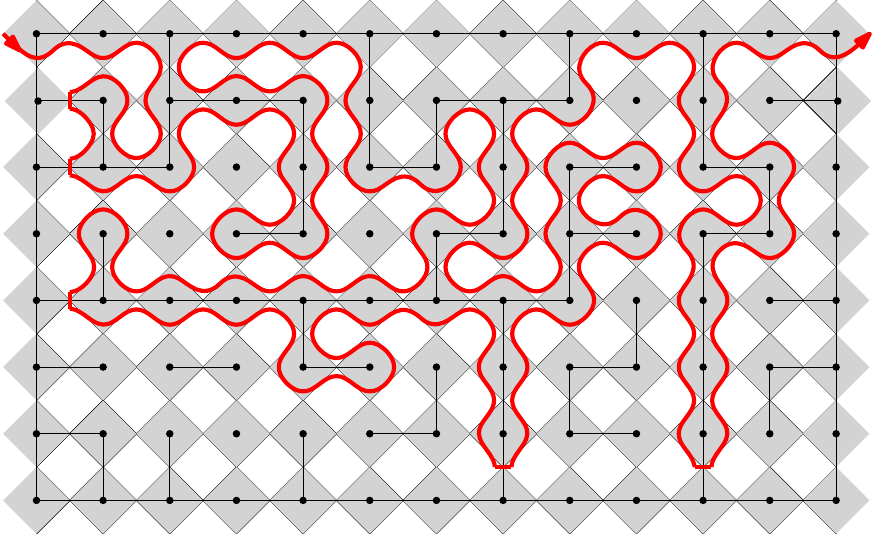}
\end{center}
\caption{\label{fig::fkinterface_wired}}
\end{figure}

\begin{theorem}\label{thm::fkising_cvg_Dobrushin}
\cite[Theorem~2]{CDCHKSConvergenceIsingSLE}.
Suppose $(\Omega; a, b)$ is a bounded simply connected subset $\Omega\subset\C$ with two distinct degenerate prime ends $a, b$ on the boundary. Let $(\Omega^{\delta}; a^{\delta}, b^{\delta})$ be a sequence of discrete Dobrushin domains on $\delta\Z^2$ converging to $(\Omega; a, b)$ in the Carath\'{e}odory sense: fix the conformal maps $\phi: (\Omega; a, b)\to (\HH; 0, \infty)$ and $\phi^{\delta}: (\Omega^{\delta}; a^{\delta}, b^{\delta})\to (\HH; 0, \infty)$ so that $\phi^{\delta}\to \phi$ as $\delta\to 0$ uniformly on compact subsets of $\Omega$. 
\begin{enumerate}
\item[(1)]Then the exploration path $\gamma^{\delta}$ of the critical FK-Ising model with Dobrushin boundary condition in $(\Omega^{\delta}; a^{\delta}, b^{\delta})$ converges in distribution for the topology induced by~\eqref{eqn::metric_space_curves} to chordal $\SLE_{16/3}$ in $\Omega$ from $a$ to $b$. 
\end{enumerate}
Suppose $\gamma$ is an $\SLE_{16/3}$ in $\Omega$ from $a$ to $b$. 
We parameterize $\phi^{\delta}(\gamma^{\delta})$ (resp. $\eta=\phi(\gamma)$) by the half-plane capacity and let $W^{\delta}$ (resp. $W$) be the driving function. Let $\delta_n\to 0$, denote by 
$\gamma^n:=\gamma^{\delta_n}$, $\eta^n:=\phi^{\delta_n}(\gamma^{\delta_n})$ and $W^{n}:=W^{\delta_n}$; and suppose $\{\eta^n\}$ is a convergent subsequence. We also have the followings 
\begin{enumerate}
\item[(2)] $W^n$ converges in distribution to $W$ with the topology of local uniform convergence.   
\item[(3)] $\eta^n$ converges in distribution to $\eta$ with the topology of local uniform convergence. 
\end{enumerate} 
\end{theorem}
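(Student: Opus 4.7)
The plan is to follow the strategy of Chelkak--Duminil-Copin--Hongler--Kemppainen--Smirnov, which combines precompactness of the exploration path family (via the topological framework recalled above) with identification of the limit via Smirnov's fermionic observable. Assertions (2) and (3) of the statement will follow automatically from (1) together with Theorem~\ref{thm::chordal_loewner_cvg}, since that theorem guarantees tightness simultaneously for the curves and their driving functions and that the limits are compatible. Thus the main task is to prove (1) and the identification of the limiting driving function as $\sqrt{16/3}\,B$.

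First I would establish tightness by verifying Condition C2 (Definition~\ref{def::c2_chordal}) for the family $\{\gamma^{\delta}\}$ of FK-Ising exploration paths under Dobrushin boundary conditions. The input here is the strong RSW bound of Theorem~\ref{thm::rsw_strong}, which gives crossing estimates uniform in the boundary conditions. Given an avoidable quad $(Q;x_1,x_2,x_3,x_4)$ in $\Omega_n(\tau)$ with $d_Q((x_1x_2),(x_3x_4))\ge M$, any crossing of $Q$ by $\gamma^{\delta}$ forces a primal or a dual path between the two ``short'' sides of $Q$; conditioning on $\gamma^{\delta}[0,\tau]$ reduces to a crossing statement in the random-cluster model with some boundary conditions on $\Omega_n(\tau)$, and Theorem~\ref{thm::rsw_strong} applied to the dual model shows that, for $M$ large enough, the conditional probability of such a crossing is at most $1/2$. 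Applying Theorem~\ref{thm::chordal_loewner_cvg} then yields joint tightness of $\gamma^{\delta}$, $\eta^{\delta}$ and $W^{\delta}$, and the fact that any subsequential limit is a continuous curve driven by the limit of $W^{\delta}$.

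The second and more delicate step is to identify the law of the limiting driving function. For this I would use Smirnov's fermionic observable $F^{\delta}$ on medial edges of $\Omega^{\delta}$: a complex-valued function, s-holomorphic in the bulk, which arises as a suitably phased signed partition function over configurations such that the given edge lies on the exploration path. The key ingredients are (a) $F^{\delta}$ solves a discrete Riemann--Hilbert boundary value problem with prescribed behavior on the wired and free arcs and at the endpoints $a^{\delta},b^{\delta}$; (b) by the domain Markov property of FK-Ising, $F^{\delta}$ evaluated in $\Omega^{\delta}\setminus\gamma^{\delta}[0,\tau]$ is a martingale in $\tau$; and (c) appropriately normalized, $F^{\delta}$ converges as $\delta\to 0$ to the unique holomorphic solution of the continuous boundary value problem, an explicit function of the conformal image in $\HH$. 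Transferring the martingale property to the limit along a convergent subsequence, and writing the continuum observable in terms of the driving function $W$, one applies It\^o's formula and the optional stopping theorem to identify $W$: the martingale decomposition forces the quadratic variation $d\langle W\rangle_t=\tfrac{16}{3}\,dt$ and the drift to vanish, hence $W_t=\sqrt{16/3}\,B_t$ for some standard Brownian motion $B$. Combined with tightness, this identifies every subsequential limit of $\gamma^{\delta}$ as chordal $\SLE_{16/3}$ from $a$ to $b$ in $\Omega$.

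The hardest step is the convergence of the discrete fermionic observable $F^{\delta}$ to its continuous limit uniformly on compact subsets, including control near the boundary and near the endpoints $a^{\delta}, b^{\delta}$ where $F^{\delta}$ has a prescribed singular behavior. This relies on the discrete complex analysis developed by Chelkak and Smirnov (s-holomorphicity, discrete harmonic conjugates, and boundary modification tricks to absorb the endpoint singularities), and on a uniform regularity estimate for discrete s-holomorphic functions. Once this convergence is established, passing the martingale identity to the limit along jointly convergent subsequences $(\gamma^{\delta_n},W^{\delta_n})\to(\gamma,W)$ and combining with the general results of Section~\ref{subsec::cvg_curves_chordal} closes the proof of (1), (2), and (3).
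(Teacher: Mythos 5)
This theorem is not proved in the paper at all: it is quoted verbatim from \cite[Theorem~2]{CDCHKSConvergenceIsingSLE} (together with \cite{Stas, ChelkakSmirnovIsing}), and your outline — tightness via Condition C2 and strong RSW, identification of the limit via Smirnov's s-holomorphic fermionic observable, its martingale property and convergence to the continuous Riemann--Hilbert problem, then It\^o/optional stopping to pin down $W_t=\sqrt{16/3}\,B_t$, with (2) and (3) following from Theorem~\ref{thm::chordal_loewner_cvg} — is exactly the strategy of those cited works. Your proposal is a correct sketch of that standard argument and takes essentially the same approach as the reference the paper relies on.
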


In the above, we have defined the exploration path with Dobrushin boundary condition. Next, we will introduce the exploration path with wired boundary condition. 
Consider a configuration in $\Omega$ with wired boundary condition and draw its loop representation on $\Omega^{\diamond}$. 
Construct the exploration path from $a^{\diamond}$ to $b^{\diamond}$ as follows. Starting from $a^{\diamond}$, cut open the loop next to $a$ and follow the loop clockwise until one of the following two cases happens: (1) the path reaches the target; (2) the path arrives at a point which is disconnected from the target. If case (1) happens, the path stops. If case (2) happens, cut open the loop nearest to the current position and follow the new loop clockwise until one of the two cases happens, and repeat the same strategy. Continue in this way until the path reaches $b^{\diamond}$. Note that the exploration path continues so that it is not disconnected from the target and that primal cluster is on the left and dual cluster is on the right as long as it is possible. When it is not possible, the path continues so that dual cluster is on the right.  See Fig.~\ref{fig::fkinterface_wired}.

\subsection{Continuous quad vs. discrete quad}
\label{subsec::quad_continuous_discrete}

The strong RSW---Theorem~\ref{thm::rsw_strong}---will play an important role in the proof of Theorem~\ref{thm::cvg_interface_chordal}. Consider FK-Ising model in $\Omega^{\delta}$. Later in Section~\ref{sec::proof_main}, we will need to build appropriate quad in $\Omega^{\delta}$ to apply the strong RSW. However, the geometry of $\Omega^{\delta}$ can be arbitrary complicated, it is not straightforward to come up with quad inside $\Omega^{\delta}$. Our approach is as follows. 
\begin{enumerate}
\item We first map $\Omega^{\delta}$ onto $\HH$ using a conformal map $\phi^{\delta}$.
\item Then we build the appropriate quads in $\HH$.
\item Finally, and this is the main step, we argue that the image under $(\phi^{\delta})^{-1}$ of the continuous quad can be accurately approximated by discrete $\delta\Z^2$-quad in $\Omega^{\delta}$. This is the purpose of Lemma~\ref{l.tech} proved in this section. 
\end{enumerate}

Take $K=[0,2]\times[0,1]$ in $\HH$. Consider four scaled copies of such domains: $K_1=20K$, $\tilde{K}_1=10K, K_2=4K, \tilde{K}_2=2K$. See Fig.~\ref{f.keyNEW}.

\begin{figure}[!htp]
\begin{center}
\includegraphics[width=0.85\textwidth]{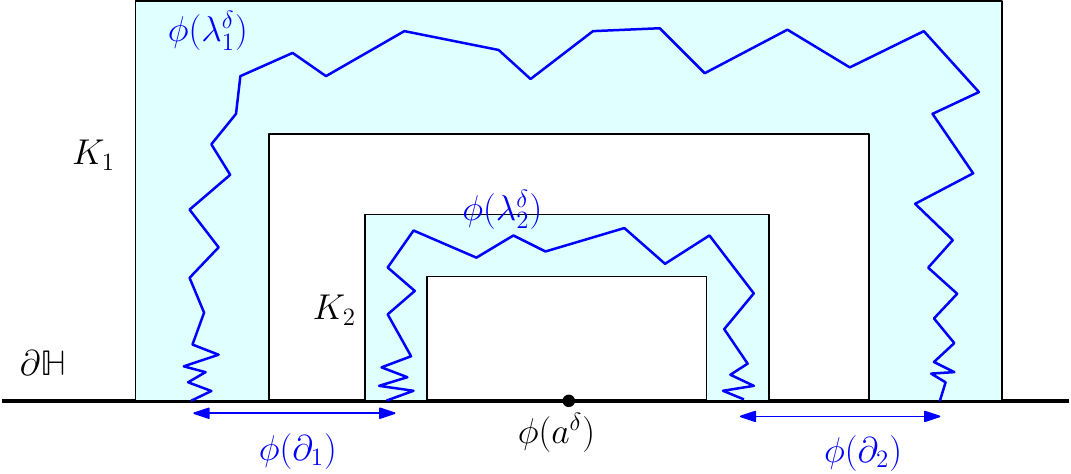}
\end{center}
\caption{}\label{f.keyNEW}
\end{figure}

\begin{lemma}\label{l.tech}
If $\delta$ is sufficiently small (i.e. $\delta \leq \delta_0(\eps,r,R)$), there is a lattice path $\lambda_1=\lambda_1^\delta$ in $\delta \Z^2 \cap \Omega^{\delta}$ which disconnects $\phi^{-1}(\partial K_1)$ from $\phi^{-1}(\partial \tilde K_1)$. Similarly there is a lattice path $\lambda_2=\lambda_2^\delta$ which disconnects  $\phi^{-1}(\partial K_2)$ from $\phi^{-1}(\partial \tilde K_2)$.
See Fig.~\ref{f.keyNEW}.
\end{lemma}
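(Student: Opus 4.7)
The plan is to build $\lambda_1$ by selecting an intermediate separating curve in $\HH$ between $\partial \tilde K_1$ and $\partial K_1$, pulling it back to $\Omega$ via $\phi^{-1}$, and approximating the resulting continuum curve by a lattice path in $\Omega^\delta$ using the Carath\'eodory convergence $\Omega^\delta \to \Omega$. The construction of $\lambda_2$ is identical with $(K_2, \tilde K_2)$ replacing $(K_1, \tilde K_1)$, so I focus on $\lambda_1$.

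First, choose an intermediate rectangle. Since $\tilde K_1 = 10K$ and $K_1 = 20K$, set $K_\beta := 15K$, so that $\tilde K_1 \subsetneq K_\beta \subsetneq K_1$, and let $\Gamma := \partial K_\beta \cap \HH$ denote the three non-real sides of $\partial K_\beta$. Then $\Gamma$ is a Jordan arc in $\HH$ with endpoints on $\R$, lying at positive Euclidean distance from both $\partial \tilde K_1 \cap \HH$ and $\partial K_1 \cap \HH$. Pulling back, $C := \phi^{-1}(\Gamma)$ is a continuous curve in $\overline{\Omega}$ whose interior lies in $\Omega$ and whose two endpoints are prime ends on $\partial \Omega$; moreover, any continuous path in $\overline{\Omega}$ joining $\phi^{-1}(\partial \tilde K_1)$ to $\phi^{-1}(\partial K_1)$ must cross $C$.

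Next, thicken and discretize. For every compact sub-arc $C' \Subset C \cap \Omega$, the preimages $\phi^{-1}(\partial \tilde K_1 \cap \HH)$ and $\phi^{-1}(\partial K_1 \cap \HH)$ are at positive distance $d_0 = d_0(C') > 0$ from $C'$, because $\phi^{-1}$ is a homeomorphism on compact subsets of $\HH$. By Carath\'eodory convergence, for $\delta \leq \delta_0$ small enough, the $d_0/3$-neighborhood of $C'$ is contained in $\Omega^\delta$, and one can select a lattice path $\lambda_1'$ in $\delta \Z^2 \cap \Omega^\delta$ lying entirely in this tubular neighborhood and closely shadowing $C'$. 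Finally, the two endpoints of $\lambda_1'$ are extended to $\partial \Omega^\delta$: near each prime-end endpoint of $C$, the curve $\Gamma$ arrives transversally at $\R$, and the Carath\'eodory convergence of the marked discrete triples $(\Omega^\delta; a^\delta, b^\delta, c^\delta)$ guarantees that the corresponding part of $\partial \Omega^\delta$ lies near $\partial \Omega$ in the appropriate chamber. Concatenating the extensions with $\lambda_1'$ yields the desired $\lambda_1$. By construction $\lambda_1$ lies in the open region between the two preimages, so together with the relevant boundary arcs of $\partial \Omega^\delta$ it encloses $\phi^{-1}(\partial \tilde K_1) \cap \Omega^\delta$ while excluding $\phi^{-1}(\partial K_1) \cap \Omega^\delta$, yielding the required disconnection.

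The main obstacle is the last step: $C$ approaches $\partial \Omega$ at its endpoints, and $\phi^{-1}$ may have arbitrary distortion there, so one cannot simply confine the construction to a compact subset of $\Omega$. The resolution is to rely on the Carath\'eodory convergence of the marked domains, which asserts that $\phi^\delta \to \phi$ locally uniformly and that the approximating discrete boundary lies in the correct prime-end chamber; this is exactly what is needed to extend $\lambda_1$ all the way to $\partial \Omega^\delta$ and to guarantee that the topological separation in $\Omega$ transfers to one in $\Omega^\delta$.
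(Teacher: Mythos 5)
Your approach is a direct construction (pull back an intermediate separating arc $\phi^{-1}(\partial K_\beta)$ and shadow it by a lattice path), whereas the paper argues by contradiction via harmonic measure. The constructive route works on compact subsets of $\Omega$, but it has a genuine gap exactly at the point you yourself flag as ``the main obstacle'': extending the lattice path all the way to $\partial\Omega^\delta$. The difficulty is not topological but quantitative. Near $\partial\HH$ the map $\phi^{-1}$ can contract the fixed separation $\dist(\partial K_1,\partial\tilde K_1)\ge\eps$ to an arbitrarily small Euclidean distance, so for a given $\delta$ the two preimages $\phi^{-1}(\partial K_1)$ and $\phi^{-1}(\partial\tilde K_1)$ could a priori both meet a single $3\delta$-square near $\partial\Omega$, in which case no separating lattice path exists in that region at all. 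Carath\'eodory convergence cannot rule this out: it gives locally uniform convergence only on compact subsets of $\Omega$ and controls neither the boundary distortion of $\phi^{-1}$ nor the fine geometry of $\partial\Omega^\delta$ there. So the sentence ``the resolution is to rely on the Carath\'eodory convergence'' names the obstacle without removing it.

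What is actually needed --- and what the paper supplies --- is the estimate that for $\delta\le\delta_0(\eps,r,R)$ no $3\delta$-square $Q_\delta$ can intersect both preimages. The paper's argument: if such a $Q_\delta$ existed, $\phi(Q_\delta)$ would have diameter at least $\eps$, hence harmonic measure at least of order $\eps$ seen from a fixed interior point; by conformal invariance $Q_\delta$ itself would then carry harmonic measure of order $\eps$ in $\Omega^\delta$, while monotonicity of harmonic measure in $B(0,R)$ together with $\dist(Q_\delta,0)\gtrsim r$ (Koebe) bounds it by $C_{R,r}(\log(1/\delta))^{-1}$ --- a contradiction for small $\delta$. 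Once this separation is known, the existence of the lattice path follows from discrete planar duality: if two sets in $\delta\Z^2\cap\Omega^\delta$ cannot be disconnected by a lattice path, they must meet a common $3\delta$-square. If you wish to keep your constructive scheme you would still need to establish this quantitative estimate near the boundary, at which point the contrapositive argument is the shorter route.
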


Note here that there are no issues of subtle prime ends as $\Omega^\delta$ is a $\delta\Z^2$ domain.

\begin{proof}
The proof relies on easy considerations of harmonic measure. Suppose one cannot find a path disconnecting say  $\phi^{-1}(\partial K_1)$ from $\phi^{-1}(\partial \tilde K_1)$. This means that one can necessarily find a square $Q_\delta$ in $\delta \Z^2$ of side-length $3 \delta$ which intersects $\phi^{-1}(\partial K_1)$ as well as $\phi^{-1}(\partial \tilde K_1)$. As such, the conformal image $\phi(Q_\delta)$ intersects $\partial K_1$ and $\partial \tilde K_1$ and its diameter needs to be larger than $\dist(\partial K_1, \partial \tilde K_1) \geq \eps$. In particular the harmonic measure of $\phi(Q_\delta)$ seen from 0 in $\U$ (for the Brownian motion stopped when first exiting $\U \setminus f(Q_\delta)$) is larger than $\frac 1 {100} \eps$ (by easy considerations on Brownian motion). Now, by conformal invariance of harmonic measure, the harmonic measure of the square $Q_\delta$ seen from 0 (for the B.M. stopped when first exiting $\Omega^\delta \setminus Q_\delta$) needs to be larger than $\frac 1 {100} \eps$ as well. On the other hand, as  $\Omega^\delta$ is bounded ($\Omega^\delta\subset B(0,R)$), by monotony properties of harmonic measure, the above harmonic measure is smaller than the harmonic measure in $B(0,R)$ of $Q_\delta$ seen from 0. As the distance from $Q_\delta$ to the origin is larger than $\Omega(r)$ (which follows for example from K\"obe's 1/4 theorem), this later harmonic measure is smaller than $ C_{R,r} (\log\frac{1}{\delta})^{-1}$.
\end{proof} 

\begin{remark}\label{r.suboptimal}
Note that it is possible to obtain much better bounds on how small $\delta$ needs to be (with slightly more technical proofs though, this is why we sticked to that one). For example one way is  to consider the extremal length in the annulus $A_1:= K_1 \setminus \tilde K_1$ from one of the arcs of $A_1$ intersecting $\partial \U$ to the other symmetric arc. This extremal length is clearly bounded from above by some constant $M<\infty$. If a path as in Lemma~\ref{l.tech} did not exist, then by designing an appropriate $\rho$-intensity on $\phi^{-1}(A_1)$ and using Beurling's estimate together with K\"{o}be's 1/4 Theorem, one can show that the extremal length (which is conformally invariant) would need to be larger than 
$\Omega(1) \log(\frac {\eps^2}{20 \delta})$ ($\eps^2$ comes from Beurling here)  which would yield a much better control on $\delta=\delta(\eps)$ in Lemma~\ref{l.tech}. 
\end{remark}

\subsection{On degenerate prime ends}

In the course of our proof, we will need to apply the above convergence Theorem \ref{thm::fkising_cvg_Dobrushin} at multiple occasions along the exploration procedure. In order to apply Theorem~\ref{thm::fkising_cvg_Dobrushin}, we need to make sure that the tip of the exploration path ($a^\delta \to a$), as well as the marked point at the end of the dual arc ($b^\delta \to b$) are degenerate prime ends a.s. This will follow from the following general Lemma.

\begin{lemma}\label{lem::degenerate_prime_end}
Let $\Omega$ be a bounded Jordan domain with some interior point $x_0$ and let $\gamma:[0,T] \to \bar{\Omega}$ be a continuous curve which avoids $x_0$. Then for any $t>0$, the conformal map $f: \U \to \Omega(t)$ (where $\Omega(t)$ is the connected component of $x_0$ in $\Omega\setminus \gamma([0,t])$ can be continuously extended to $\bar{\U}$. In particular all points on $\partial\Omega(t)$ are degenerate prime ends. (N.B. $\Omega(t)$ may not be a Jordan domain anymore). 
\end{lemma}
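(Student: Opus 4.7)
The plan is to apply Carath\'eodory's boundary-extension theorem: for a bounded simply connected domain $D \subset \C$, a conformal map $f: \U \to D$ extends to a continuous map $\bar \U \to \bar D$ if and only if $\partial D$ is locally connected (see e.g.\ Pommerenke, \emph{Boundary Behaviour of Conformal Maps}, Theorem~2.1). The ``in particular'' clause on degenerate prime ends is then immediate from the existence of such an extension, since continuity of $\bar f$ forces each $\zeta \in \partial \U$ to be sent to a single point, so every prime end of $\Omega(t)$ has singleton principal impression, i.e.\ is degenerate.

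First I would verify that $\Omega(t)$ is a bounded simply connected domain. Boundedness is immediate from $\Omega(t) \subset \Omega$. Simple connectedness is not automatic from the hypotheses alone (a curve winding around $x_0$ inside $\Omega$ could produce an annular component), but it does hold in the relevant setting where the exploration curve starts on $\partial\Omega$: in that case $\gamma([0,t]) \cup \partial\Omega$ is connected, the complement $\overline{\Omega \setminus \Omega(t)} \cup (\C \setminus \Omega)$ is connected, and hence $\C \setminus \Omega(t)$ is connected, which is equivalent to simple connectedness of $\Omega(t)$.

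Next I would observe that $\partial\Omega(t) \subset \partial\Omega \cup \gamma([0,t])$: any $p \in \partial\Omega(t) \cap \Omega$ is a limit of $\Omega(t)$-points while not lying in $\Omega(t)$, so it cannot be interior to any other connected component of $\Omega\setminus\gamma([0,t])$ and must therefore lie on $\gamma([0,t])$. Both $\partial\Omega$ (a Jordan curve) and $\gamma([0,t])$ (a continuous image of $[0,t]$) are Peano continua by the Hahn--Mazurkiewicz theorem, hence locally connected, and a direct verification shows that the union of two closed locally connected subsets of a metric space is again locally connected. The main obstacle is that $\partial\Omega(t)$ is only a \emph{closed subset} of this union, and closed subsets of locally connected sets are not automatically locally connected.

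To bridge this gap I would invoke Torhorst's theorem (see e.g.\ Newman, \emph{Elements of the Topology of Plane Sets of Points}): if $K \subset S^2$ is a Peano continuum then the boundary of every connected component of $S^2 \setminus K$ is locally connected. I would apply it with $K := (S^2 \setminus \Omega) \cup \gamma([0,t])$. By the Schoenflies theorem $S^2 \setminus \Omega$ is a closed topological disk and hence locally connected, and its union with the Peano continuum $\gamma([0,t])$ is compact and locally connected by the union fact above. Connectedness of $K$ holds as soon as $\gamma([0,t]) \cap \partial\Omega \neq \emptyset$, which is precisely the setting in which the simple-connectedness argument of the previous paragraph applies. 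Since $S^2 \setminus K = \Omega \setminus \gamma([0,t])$ and $\Omega(t)$ is one of its connected components, Torhorst's theorem yields the local connectedness of $\partial\Omega(t)$, and Carath\'eodory's theorem then furnishes the desired continuous extension of $f$ to $\bar\U$.
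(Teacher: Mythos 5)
Your proof is correct, but it takes a genuinely different route from the paper's. The paper also reduces to Carath\'eodory's continuity theorem (citing \cite[Proposition~3.7]{LawlerConformallyInvariantProcesses} and Pommerenke, in the equivalent form ``$\C\setminus\Omega(t)$ is locally connected''), but then verifies local connectedness \emph{directly}: it argues by contradiction with a compactness/extraction argument, using only the continuity of the curve whose complement defines $\Omega(t)$, and closes the contradiction with the singleton $K_1=\{x^*\}$. You instead outsource the topological content to Torhorst's theorem on boundaries of complementary components of Peano continua, applied to $K=(S^2\setminus\Omega)\cup\gamma([0,t])$; your supporting steps (Schoenflies, Hahn--Mazurkiewicz, the union of two intersecting closed locally connected sets being locally connected, and the observation that $\partial\Omega(t)$ is merely a closed subset of that union --- which is exactly why the naive argument fails) are all sound. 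The trade-off: the paper's argument is elementary and self-contained, at the cost of some hand-waving in the final step; yours is shorter and cleaner but leans on a heavier classical black box whose availability in the cited form one should check (it is in Whyburn's \emph{Analytic Topology} and Kuratowski; Newman's book contains the relevant material on complementary domains). A genuine merit of your write-up is that you flag what the paper leaves implicit: simple connectedness of $\Omega(t)$, and hence the existence of the conformal map $f$ at all, is not a consequence of the lemma's stated hypotheses and requires the (satisfied-in-application) assumption that $\gamma$ meets $\partial\Omega$ --- the same assumption that makes your $K$ connected.
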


\begin{remark}\label{}
Note that this statement is similar in flavour to the {\em visibility of the tip} statement in Theorem~\ref{thm::chordal_loewner_cvg}--Item(2). But it is independent of it : it does not follow from, nor imply the {\em visibility of the tip} property. 
\end{remark}

This Lemma is not new: see for example Example 3.8 in \cite{LawlerConformallyInvariantProcesses} and its proof in \cite[pp 88--89]{NewmanElementsTopology}. We include a proof below for completeness. 
\begin{proof}[Proof of Lemma~\ref{lem::degenerate_prime_end}]
Following \cite[Proposition~3.7]{LawlerConformallyInvariantProcesses} (see also the continuity theorem p18 in \cite{Pommerenke}), it is equivalent to the fact that $\C\setminus \Omega(t)$ is locally connected for any time $t>0$. 
As we assumed that $\Omega$ is a Jordan domain, $\Omega(t)$ is the connected component of $x_0$ of a continuous curve $\eta: [0,1] \to \C$. Following \cite{LawlerConformallyInvariantProcesses}, a closed set $K\subset \C$ is  {\em locally connected} if for any $\eps>0$ there exists $\delta >0$ such that for any $z,w \in K$ with $|z-w|< \delta$, there exists a connected set $K_1\subset K$ with $z,w\in K_1$ and $\diam(K_1) \leq \eps$. 

Suppose $\C\setminus \Omega(t)$ is not locally connected, i.e. one can find $\eps>0$ and a sequence $\{z_n, w_n\}$ of points such that $|z_n-w_n|\to 0$ which do not satisfies the above property. As $\Omega$ is a bounded set, $\sup_n |z_n| \vee |w_n| <\infty$, we can then extract a convergent subsequence $\{\hat{z}_n, \hat{w}_n\}$ such that $\hat{z}_n, \hat{w}_n\to x^*$. If that point $x^*$ is at positive distance from the curve $\eta$ (in other words if $x^*$ is in $\C\setminus \eta([0,1])$), then it is immediate to reach a contradiction as $\C\setminus \eta([0,1])$ is open. If on the other hand, $x^*$ belongs to the range of $\eta$, then $x^* = \eta (t^*)$ for some $t^*$ and one can find times $u_n,$ (resp. $v_n$) such that $\hat z_n$ (resp. $\hat w_n$) is very close to $\eta (u_n)$ (resp. $\eta(v_n)$) in $\C \setminus \Omega(t)$ (for example by taking the closest points from $\hat z_n,\hat w_n$ to $\eta$).  By extracting further, we can assume $u_n \to u$ and $v_n \to v$. Our hypothesis implies that there is no connected subset $K_1$ in $\C \setminus \Omega(t)$ of diameter less than $\eps/2$ connecting $\eta(u)$ to $\eta(v)$. As $\hat{z}_n, \hat{w}_n \to x^*=\eta(t^*)$, by the continuity of the curve $\eta$, we must have $\eta(u) = \eta(v) = x^*$. This gives us a contradiction by choosing $K_1:= \{x^*\}$. 
\end{proof}

\section{Convergence of renormalized harmonic measure}\label{s.RHM}
Throughout this section, we will assume we are in the same setup as in Theorem~\ref{thm::fkising_cvg_Dobrushin}. We thus have a sequence of domains $(\Omega^n; a^n, b^n)$ ($=(\Omega^{\delta_n}; a^{\delta_n}, b^{\delta_n})$) which converge in Carath\'{e}odory sense to $(\Omega; a ,b)$ and we are given conformal maps  $\phi^n : \Omega^n \to \H$ and $\phi : \Omega \to \H$ satisfying the hypothesis in Theorem~\ref{thm::fkising_cvg_Dobrushin}. Recall the main convergence result from that Theorem is its item (3) on the random curves in $\H$, $\eta^n:=\phi^n(\gamma^n)$ and $\eta:=\phi(\gamma)$ each parametrised by the half-plane capacity in $\H$. Furthermore recall that the Loewner driving function $W$ of $\eta$ is the limit in law of $W^n$, the driving function of $\eta^n$. 

\begin{definition}\label{d.RHM}
For any Borel set $A\subset \R$, we define the \textbf{renormalized harmonic measure} of $A$ seen from infinity to be 
\begin{align*}\label{}
\mathrm{RHM}(A)=\mathrm{RHM}_{\H}(A):= \lim_{y \to \infty} \pi y  \; \FK{}{i\cdot y}{B_\tau \in A}\,,
\end{align*}
where $\tau$ is the first time the Brownian motion started at $i y$ touches $\p \H$. 
The multiplicative factor $\pi$ is there so that $\mathrm{RHM}([0,L])=L$.
By conformal invariance of Brownian motion, we define in the same fashion the \textbf{renormalized harmonic measure} for general hulls $H:= \H \setminus K$ where $K$ is any compact set of the plane as follows: for any subset $A\subset \p H$, we define
\begin{align*}\label{}
\mathrm{RHM}_H(A):= \lim_{y \to \infty} \pi y  \; \FK{}{i\cdot y}{B_{\tau^H} \in A}\,.
\end{align*}
\end{definition}

We now state the main result of this section.

\begin{proposition}\label{pr.thetan}
Assume we are in the same setup as in Theorem~\ref{thm::fkising_cvg_Dobrushin}.  
We also assume (using Skorokhod's representation theorem) that the random curves $\eta^n$ and $\eta$ (each parametrized by half-plane capacity) are coupled on the same probability space so that both $W^n$ and $\eta^n$ a.s. converge locally uniformly to  $W$ and $\eta$.
Let $t \mapsto \theta^n(t)$ (resp. $t\mapsto \theta(t)$) denote the renormalized harmonic measure of the right boundary of $\eta^n([0,t])$ (resp. $\eta([0,t])$) seen from infinity. 
Then $\theta^n$ a.s. converges to $\theta$ locally uniformly. I.e. for any $T>0$, almost surely
\begin{align*}\label{}
\| \theta^n  - \theta \|_{\infty, T} = \sup_{t\in [0,T]} |\theta^n(t) - \theta(t)| \to 0,\quad \text{as }n\to \infty.
\end{align*}
\end{proposition}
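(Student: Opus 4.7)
The plan is to first reduce the claim to the locally uniform convergence of an algebraic quantity associated to the hull. By the conformal invariance of $\mathrm{RHM}$ under hydrodynamically normalized maps, applying $g_t^n$ to the right boundary of $K_t^n := \eta^n([0,t])$ yields the interval $[W_t^n, X_t^n] \subset \R$, where $X_t^n$ denotes the rightmost point of $g_t^n(K_t^n) \cap \R$. Hence $\theta^n(t) = \mathrm{RHM}_{\H}([W_t^n, X_t^n]) = X_t^n - W_t^n$, and similarly $\theta(t) = X_t - W_t$. Since $W^n \to W$ locally uniformly by assumption, the task reduces to proving $X^n \to X$ locally uniformly on each $[0,T]$.

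The key structural observation is that both $X^n$ and $X$ are non-decreasing continuous functions of $t$. Lemma~\ref{lem::chordal_evolution_boundary_point} provides the integral representation $X_t = \int_0^t 2\,ds/(X_s - W_s)$; its hypothesis $\Leb(\eta \cap \R) = 0$ is supplied by Lemma~\ref{lem::processvsrhm} for the limiting curve, and can be verified for each $\eta^n$ (since $\eta^n$ is the conformal image of a lattice path whose intersection with $\p\Omega^n$ consists of countably many lattice sites, mapped by the analytic boundary correspondence to a $\R$-null set). The nonnegativity of the integrand $2/(X_s - W_s)$ forces monotonicity, and absolute continuity of the integral gives continuity of $X$.

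The next step, and the main obstacle, is pointwise convergence $X_t^n \to X_t$ for every fixed $t > 0$. Locally uniform convergence $\eta^n \to \eta$ together with $W^n \to W$ yields Carath\'{e}odory convergence of the slit domains $\H \setminus K_t^n \to \H \setminus K_t$ with basepoint at $\infty$, and hence $g_t^n \to g_t$ locally uniformly off the hulls. To convert this into control of the rightmost boundary contact one combines the visibility of the tip (Theorem~\ref{thm::chordal_loewner_cvg}(2)) with the degenerate prime-ends analysis of Lemma~\ref{lem::degenerate_prime_end} to argue that the rightmost real contact of the hull is a continuous functional under Carath\'{e}odory convergence. I expect this to be the delicate step: the rightmost contact can in principle jump under small perturbations if the hull nearly pinches, so a quantitative uniform-in-$n$ input such as the strong RSW bounds (Theorem~\ref{thm::rsw_strong}) is likely needed to rule out near-pinch configurations at the fixed time $t$.

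Finally, to upgrade to uniform convergence, I invoke P\'{o}lya's theorem: a sequence of non-decreasing functions converging pointwise on $[0,T]$ to a continuous non-decreasing function converges uniformly on $[0,T]$. Applying this to $X^n \to X$ and combining with the uniform convergence $W^n \to W$ on $[0,T]$ yields $\|\theta^n - \theta\|_{\infty,T} \to 0$ almost surely, as required.
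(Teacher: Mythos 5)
Your reduction $\theta^n(t)=X^n_t-W^n_t$ is correct and is in fact the same identity the paper exploits (its proof writes $\theta^n(t)=\bigl(g^n_t(R)-W^n_t\bigr)-\mathrm{RHM}_{\H\setminus\eta^n[0,t]}([0,R])$, and the second term is just $g^n_t(R)-X^n_t$), and the P\'olya upgrade from pointwise to uniform convergence via monotonicity of $X^n$ and continuity of $X$ is a legitimate and even pleasant way to avoid the uniform-in-$t$ bookkeeping. The problem is that the entire content of the proposition sits in the step you defer: pointwise convergence $X^n_t\to X_t$. You correctly sense that this is where the danger lies, but you do not give an argument — you only list candidate tools (visibility of the tip, the prime-end lemma, ``a quantitative uniform-in-$n$ input such as RSW is likely needed''). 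Carath\'eodory convergence of the slit domains gives $g^n_t\to g_t$ locally uniformly \emph{away from the hulls}, which says nothing about the boundary quantity $X^n_t$; and locally uniform convergence $\eta^n\to\eta$ genuinely does not imply convergence of harmonic measures of boundary arcs (a near-touch of $\R_+$ by $\eta$ that becomes an actual touch for $\eta^n$, or vice versa, can move the rightmost contact point macroscopically). So this step cannot be treated as a continuity-of-functionals remark; as written, the proposal proves nothing beyond the (easy) $W$-part of the identity.

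For comparison, the paper closes exactly this gap with a deterministic harmonic-measure argument, not with RSW. It introduces the rightmost point $x^\delta_t\in\R_+$ whose $\delta$-ball meets $\eta[0,t]$, enlarges the hull to $\mathrm{Hull}(\eta[0,t])^{(\delta^2)}\cup B(x^\delta_t,\delta)$, sandwiches $\mathrm{RHM}_{\H\setminus\eta^n[0,t]}([0,R])$ between the RHM in this enlarged-hull complement and $\mathrm{RHM}_{H_t}([0,R])$ for $n\geq n(\delta)$ (using only $\|\eta^n-\eta\|_{\infty,T}<\delta^2$ and domain monotonicity), and then bounds the discrepancy by $O(\delta)$ for the ball plus $O(\sqrt{\delta^2/\delta})=O(\delta^{1/2})$ via Beurling's estimate for trajectories landing $\delta^2$-close to the hull that must still travel distance $\delta$ to reach $[0,R]$; the same argument is run in the reverse direction with a point $x^\delta_{n,t}$ adapted to $\eta^n$. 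Some quantitative mechanism of this kind — Beurling, or an extremal-length/RSW substitute — is indispensable, and supplying it is what your proposal is missing. (A minor additional point: your justification that $\Leb(\eta^n\cap\R)=0$ via ``countably many lattice sites'' is shaky, since the discrete exploration path can run along whole boundary segments; fortunately monotonicity of $X^n$ is a general Loewner fact and does not require the integral representation for $\eta^n$, so your P\'olya step survives without it.)
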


\begin{proof}
Let us fix some time $T>0$. Recall we are in the setup of Theorems~\ref{thm::chordal_loewner_cvg} and~\ref{thm::fkising_cvg_Dobrushin}. By combining hypothesis of Theorem~\ref{thm::chordal_loewner_cvg} with the estimate Lemma~\ref{lem::diamhull_deterministic}, one easily obtains that 
\begin{align}\label{e.diam}
M:= \sup_n \diam (\eta^n[0,T]) 
\vee 
\diam (\eta[0,T]) <\infty \quad\text { a.s.} 
\end{align}

\begin{figure}[!htp]
\begin{center}
\includegraphics[width=0.9\textwidth]{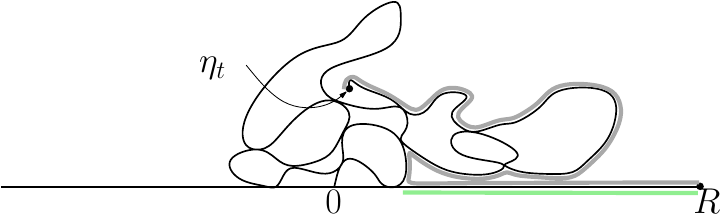}
\end{center}
\caption{We express the harmonic measure on the right of the curve $\eta([0,t])$ as the difference of the harmonic measure of the grey and green arcs.}\label{f.RHM1}
\end{figure}

Our proof will be based on writing the renormalized harmonic function $\theta^n(t)$ as a difference of two quantities: indeed one has for any $t\in [0,T]$, and for any $R > M$, 
\begin{align}\label{e.thetan}
\theta^n(t) = \mathrm{RHM}_{\H \setminus \eta^n[0,t]}(\eta^n(t), R) - \mathrm{RHM}_{\H \setminus \eta^n[0,t]}([0,R]) .
\end{align}
Here $\mathrm{RHM}_{\H \setminus \eta^n[0,t]}(\eta^n(t), R)$ means the renormalized harmonic measure of the right boundary of $\eta^n[0,t]$ union $[0,R]$ seen from infinity. 
See Fig.~\ref{f.RHM1}. 
Let $g^n_t$ be the conformal map from the unbounded connected component of $\HH\setminus\eta^n[0,t]$ onto $\HH$ normalized at $\infty$. 
Now by conformal invariance of RHM, the first term is 
\begin{align*}\label{}
\mathrm{RHM}_{\H \setminus \eta^n[0,t]}(\eta^n(t), R)  & = g^n_t(R) - W^n(t)  \\
& = R + O(\frac {(\diam \eta^n[0,t])^2} R) - W^n(t) \\
& =  R + O(\frac 1 R) - W^n(t)  \text{   by~\eqref{e.diam}} \\
& \to R + O(\frac 1 R) - W(t) \text{   uniformly on }[0,T]\,. 
\end{align*}
Note that we also have 
\begin{align*}\label{}
\mathrm{RHM}_{\H \setminus \eta[0,t]}(\eta(t), R)  & = 
R + O(\frac 1 R) - W(t)\,.
\end{align*}
By letting $R\to \infty$ in the above two displayed equations and using~\eqref{e.thetan}, this concludes our proof modulo the remaining lemma below. 
\end{proof}

\begin{lemma}\label{}
For any $R>M$ (recall~\eqref{e.diam}), 
\begin{align*}\label{}
\mathrm{RHM}_{\H \setminus \eta^n[0,t]}([0,R])   \to  \mathrm{RHM}_{\H \setminus \eta[0,t]}([0,R]) 
\end{align*}
uniformly in $t\in[0,T]$ and the speed of convergence is independent of $R>M$. 
\end{lemma}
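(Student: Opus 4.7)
The plan is to exploit the exact identity
\[
\mathrm{RHM}_{\H\setminus\eta^n[0,t]}([0,R]) = g^n_t(R) - V^n(t),
\]
where $V^n(t)$ denotes the rightmost point of $g^n_t(\eta^n[0,t])\cap\R$ (so that $\theta^n(t)=V^n(t)-W^n(t)$, matching~\eqref{e.thetan}), and analogously $\mathrm{RHM}_{\H\setminus\eta[0,t]}([0,R])=g_t(R)-V(t)$. I proceed in two stages: a reduction using the Laurent expansion of $g^n_t$ at infinity that transfers the uniformity in $R>M$ to a single fixed $R_0$, followed by a Brownian-motion argument for that fixed $R_0$.

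For the reduction, observe that for $R>R_0>M$ the segment $[R_0,R]$ lies entirely to the right of the hull of $\eta^n[0,t]$, so it sits on $\partial(\H\setminus\eta^n[0,t])$ and under $g^n_t$ maps to $[g^n_t(R_0),g^n_t(R)]$; hence
\[
\mathrm{RHM}_{\H\setminus\eta^n[0,t]}([0,R]) - \mathrm{RHM}_{\H\setminus\eta^n[0,t]}([0,R_0]) = g^n_t(R) - g^n_t(R_0).
\]
Since $\eta^n[0,t]\subset B(0,M)$ uniformly and $\hcap = t$, the expansion $g^n_t(z)=z+2t/z+\sum_{k\geq 2}a_k^n(t)\,z^{-k}$ converges for $|z|>2M$ with $|a_k^n(t)|$ bounded uniformly in $n,t\in[0,T]$ by a function of $M$ and $k$. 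The uniform (in $t$) Hausdorff convergence of the hulls --- consequence of $\eta^n\to\eta$ locally uniformly, $W^n\to W$ and Lemma~\ref{lem::diamhull_deterministic} --- yields Carath\'eodory convergence, hence $a_k^n(t)\to a_k(t)$ uniformly in $t$ for each $k$; dominated convergence on the Laurent series then gives $g^n_t(R)\to g_t(R)$ uniformly in $(t,R)\in[0,T]\times[2M,\infty)$. The remaining range $R\in(M,2M]$ is handled by the same Carath\'eodory convergence combined with equicontinuity in $R$. It thus suffices to prove the lemma for a single $R_0>2M$.

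For this fixed $R_0$, I would use the probabilistic definition of the renormalized harmonic measure. The Poisson kernel in $\H$ pushed back through $g^n_t$ gives, for $y\gg M+R_0$,
\[
\pi y\,\PP^{iy}\!\bigl[B_{\tau^n_t}\in[0,R_0]\bigr] = \mathrm{RHM}_{\H\setminus\eta^n[0,t]}([0,R_0]) + O_{M,R_0}(1/y),
\]
with the error uniform in $n$ and $t\in[0,T]$. Fix $y_0$ large enough to make this error smaller than any prescribed $\eps>0$, then couple a single Brownian motion $B$ started at $iy_0$ and stopped at its first exit from each of the two domains $\H\setminus\eta^n[0,t]$ and $\H\setminus\eta[0,t]$. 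Uniform Hausdorff convergence of the hulls, together with a uniform bound on the density of the exit distribution near the endpoints $\{0,R_0\}$ (coming from the Poisson kernel in the image domain $\H$), should yield the required uniform-in-$t$ convergence of $\PP^{iy_0}[B_{\tau^n_t}\in[0,R_0]]$.

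The main obstacle is this coupling step: Brownian paths may approach $\p(\H\setminus\eta[0,t])$ without hitting it, and one must argue that the exit time and point from $\H\setminus\eta^n[0,t]$ are close to those from $\H\setminus\eta[0,t]$. This requires combining the uniform Hausdorff convergence with the boundary regularity of the limiting domain provided by Theorem~\ref{thm::chordal_loewner_cvg} (particularly the visibility of the tip in item~(2), which ensures that $\partial(\H\setminus\eta[0,t])$ is a continuous curve and that the domains converge in Carath\'eodory sense uniformly in $t$), followed by a Beurling-type argument to rule out thin ``passage ways'' in which the Brownian motion could get trapped between the two hulls.
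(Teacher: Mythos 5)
Your first step (the reduction to a single fixed $R_0$) is sound: for $R>R_0>M$ the increment $\mathrm{RHM}_{\H\setminus\eta^n[0,t]}([R_0,R])=g^n_t(R)-g^n_t(R_0)$ indeed converges uniformly in $(t,R)$, e.g.\ because $g^n_t$ solves the Loewner ODE with driving function $W^n\to W$ uniformly on $[0,T]$ and $|g^n_t(z)-z|=O(M^2/|z|)$ for $|z|\geq 2M$; this part is fine (if somewhat more elaborate than necessary -- the paper gets the uniformity in $R$ for free because its error term does not depend on $R$).

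The problem is that your second step, the comparison of the exit distributions from $\H\setminus\eta^n[0,t]$ and $\H\setminus\eta[0,t]$ for the fixed $R_0$, is the entire content of the lemma, and you leave it at ``should yield'' while explicitly naming the obstacle (thin passages between the two hulls) without resolving it. Two concrete difficulties remain. First, neither domain contains the other, so you cannot invoke monotonicity of harmonic measure directly; some intermediate domain must be manufactured. Second, and more importantly, a bare Beurling argument cannot work near the rightmost point where the hull approaches $\R_+$: there a Brownian path can be within distance $o(1)$ of the curve and still legitimately exit on $[0,R_0]$ in \emph{both} domains, so ``close to the hull $\Rightarrow$ unlikely to reach $[0,R_0]$'' is simply false in that region, and this is exactly where the two exit distributions can disagree. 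The paper's proof handles both points at once: it defines $x_t^\delta$ as the rightmost point of $\R_+$ whose $\delta$-ball meets $\eta[0,t]$, forms the smaller domain $H_t^\delta$ by deleting the $\delta^2$-neighborhood of the hull \emph{together with} $B(x_t^\delta,\delta)$, observes that $\eta^n\to\eta$ uniformly forces $\H\setminus\eta^n[0,t]\supset$-type nesting with $H_t^\delta$ for $n\geq n(\delta)$ (restoring monotonicity), and then bounds the discrepancy by $O(\delta)$ for paths exiting on $\partial B(x_t^\delta,\delta)$ plus $O(\sqrt{\delta^2/\delta})=O(\delta^{1/2})$ via Beurling for paths exiting on the $\delta^2$-collar, since the excised ball forces such paths to travel a further distance $\delta$ before they can reach $[0,R]$. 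The excision of $B(x_t^\delta,\delta)$ is precisely the device that closes the gap you identified; without it (or an equivalent construction), your argument does not go through.
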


\begin{figure}[!htp]
\begin{center}
\includegraphics[width=0.9\textwidth]{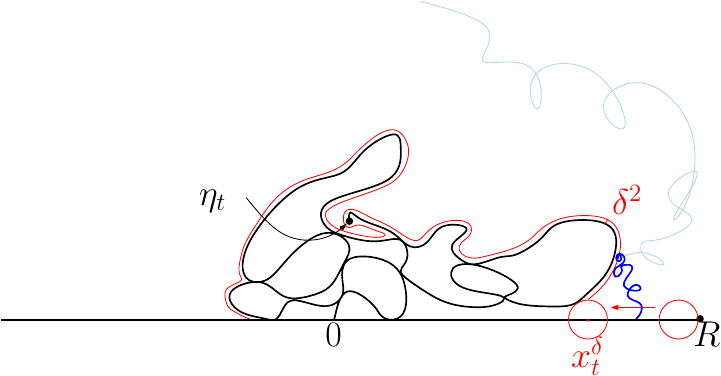}
\end{center}
\caption{The point $x_t^\delta$ can be defined as the center of the $\delta$-ball started at $+\infty$ and shifted towards the origin until it intersects for the first time the curve $\eta([0,t])$. }\label{f.RHM2}
\end{figure}

\begin{proof}
Let $\delta>0$ be some small real number. Define (See Fig.~\ref{f.RHM2})
\[
x_t^\delta := \sup\{ x\in \R_+: \, B(x_t^\delta, \delta) \cap \eta[0,t] \neq \emptyset\}\,,
\]
where $B(x_t^\delta,\delta)$ is the Euclidean ball centred around $x_t^\delta \in \p\H$ of radius $\delta$. 
Let us consider the domains 
\[
\begin{cases}
H_t & := \H \setminus \mathrm{Hull}(\eta[0,t]) \\
H_t^\delta & :=  \H \setminus \bigl( \mathrm{Hull}(\eta[0,t])^{(\delta^2)} \cup B(x_t^\delta, \delta) \bigr)\,,
\end{cases}
\]
where $\mathrm{Hull}(\eta[0,t])^{(\delta^2)}$ is the $\delta^2$-neighborhood in $\H$ of the hull generated by $\eta[0,t]$. See Fig.~\ref{f.RHM2}. 
Clearly, one has $
H_t \subset H_t^\delta$ but we shall not use directly this fact. What we shall use instead is the fact that when $n=n(\delta)$ is sufficiently large then 
\[
H^n_t \subset H_t^\delta
\]
where $H^n_t : = \H \setminus \eta^n[0,t]$. Indeed this follows readily from the fact that $\eta^n \to \eta$ locally uniformly. 
This in turn implies immediately that for $n \geq n(\delta)$, 
\begin{align}\label{}
\mathrm{RHM}_{H_t^\delta}([0,R]) \leq \mathrm{RHM}_{\H \setminus \eta^n[0,t]}([0,R])
\end{align}
Let us now show that there exists some continuous function $f: [0,1] \to [0,1]$ with $f(0)=0$ s.t. 
\begin{align}\label{eqn::RHM_aux}
\mathrm{RHM}_{H_t}([0,R]) \leq \mathrm{RHM}_{H_t^\delta}([0,R])  + f(\delta) 
\end{align}

To show~\eqref{eqn::RHM_aux}, we consider a Brownian motion starting at $\frac 1 u i$ and stopped the first time it hits $\p H_t^\delta$. Let $\tau^\delta$ denote that stopping time. As $H_t \subset H_t^\delta$, one has $\tau^\delta \leq \tau$. Our goal is then to compare $\frac 1 u \FK{}{\frac 1 u i}{B_{\tau} \in [0,R]}$ 
with $\frac 1 u \FK{}{\frac 1 u i}{B_{\tau^\delta} \in [0,R]}$. The difference is given by 
\begin{align*}\label{}
\frac 1 u \FK{}{\frac 1 u i}{B_{\tau^\delta}\in \p \mathrm{Hull}(\eta[0,t])^{(\delta^2)} \setminus B(x_t^\delta, \delta) \text{ and } 
B_\tau \in [0,R]} + 
\frac 1 u \FK{}{\frac 1 u i}{B_{\tau^\delta}\in \p  B(x_t^\delta, \delta)} 
\end{align*}
The second term is less than the RHM of $B(x_t^\delta,\delta)$ in the full $\H$ and is thus bounded from above by $O(\delta)$ as $\delta \to 0$. 
For the first term, notice that
\begin{itemize}
\item $B_{\tau^\delta}$ is at distance $\delta^2$ from $\mathrm{Hull}(\eta[0,t])$ 
\item Because of the definition of $x_t^\delta$ and $B(x_t^\delta, \delta)$, the Brownian motion needs to travel at distance at least $\delta$ between times $\tau^\delta$ and $\tau$ in order to reach $[0,R]$. 
\end{itemize}
Using Beurling's estimate, this happens with probability at most $O(\sqrt{\delta^2/\delta})=O(\delta^{1/2})$. This gives us our desired bound with a continuous function $f$ satisfying $f(\delta)=\Omega(\delta^{1/3})$. 

We have thus shown that 
\begin{align*}\label{}
\mathrm{RHM}_{H_t}([0,R]) & \leq \mathrm{RHM}_{H_t^\delta}([0,R])  + f(\delta)  \\
&  \leq \mathrm{RHM}_{\H \setminus \eta^n[0,t]}([0,R]) + f(\delta)  \text{ for }n\geq n(\delta) 
\end{align*}

Using the exact same proof in the reverse direction (by now defining a point $x_{n,t}^\delta$ which could possibly be very far from $x_t^\delta$), we obtain that 
for $n\geq n(\delta)$, 
\begin{align*}\label{}
\mathrm{RHM}_{\H \setminus \eta^n[0,t]}([0,R]) &  \leq \mathrm{RHM}_{H_t}([0,R]) + f(\delta) 
\end{align*}
which thus concludes the proof. 
\end{proof}

\section{Proof of Theorem~\ref{thm::cvg_interface_chordal}}
\label{sec::proof_main}

\subsection{General setup}
\label{subsec::key_setup}
In this section, we explain the general setup as in Theorem~\ref{thm::cvg_interface_chordal}. 
Suppose $\Omega$ is a bounded simply connected subset of $\C$ with three distinct boundary points (degenerate prime ends) $a, b, c$ in counterclockwise order. Let $(\Omega^{\delta}; a^{\delta}, b^{\delta}, c^{\delta})$ be a sequence of domains on $\delta\Z^2$ converging to $(\Omega; a, b, c)$ in the Carath\'{e}odory sense: 
there exist conformal map $\phi: (\Omega; a, c)\to (\HH; 0, \infty)$ and $\phi^{\delta}: (\Omega^{\delta}; a^{\delta}, c^{\delta}) \to (\HH; 0, \infty)$ so that $\phi^{\delta}\to \phi$ as $\delta\to 0$ uniformly on compact subset of $\Omega$ and $\phi^{\delta}(b^{\delta})\to \phi(b)$.  

Consider the FK-Ising model on $\Omega^{\delta}$ with Dobrushin boundary condition: edges along $\partial_{ba}$ are wired and the dual-edges of $\partial^*_{ab}$ are dual-wired. 
Suppose $\gamma^{\delta}$ is the exploration path from $a^{\delta}$ to $c^{\delta}$, as explained in Section~\ref{subsec::pre_fk}. 
Suppose $\delta_n\to 0$, and denote by $\LA^n=\LA^{\delta_n}$ for $\LA=\Omega, a, b, c, \gamma, \phi, S$ and denote $\eta^n=\phi^{\delta_n}(\gamma^{\delta_n})$. We parameterize $\eta^n$ by the half-plane capacity and denote by $W^n$ the driving function and by $g^n_t$ the corresponding conformal maps in the definition of Loewner chain. Let $\theta^n(t)$ be the renormalized harmonic measure of the right side of $\eta^n[0,t]$ union $[0,\phi^{n}(b^{n})]$ in $\HH\setminus\eta^n[0,t]$ seen from infinity.

\begin{definition}[Definition of stopping times $\{ T_{k}^{n,\eps}, S_{k}^{n,\eps} \}_{k\geq 1}$]\label{d.stop} 
Fix $\eps\ge 10\sqrt{\delta_n}$. Define $T_1^{n,\eps}$ to be the first time that $\theta^n$ is greater than $\eps$ (if $\theta^n(0)=\phi^n(b^n)\ge \eps$, then $T_1^{n, \eps}=0$). Define $S_1^{n,\eps}$ to be the first time after $T_1^{n, \eps}$ that $\gamma^n$ hits the boundary arc $\partial_{bc}$.  
Generally, for $k\ge 1$, let $T_{k+1}^{n,\eps}$ be the first time after $S_k^{n,\eps}$ that $\theta^n$ exceeds $\eps$ and define $S_{k+1}^{n,\eps}$ to be the first time after $T_{k+1}^{n,\eps}$ that $\gamma^n$ hits the boundary arc $\partial_{bc}$.  
\end{definition}

In this way, we decompose the process $\eta^n$ as follows: from time $T_k^{n,\eps}$ to $S_k^{n,\eps}$, the exploration process is similar to the situation when the boundary condition is Dobrushin; from time $S_k^{n,\eps}$ to $T_{k+1}^{n,\eps}$, we know little about the process, and we call this part as \emph{dust}. 

As the sequence $\{\gamma^n\}_n$ satisfies Condition C2 in Definition~\ref{def::c2_chordal} (due to Theorem~\ref{thm::rsw_strong}), from Theorem~\ref{thm::chordal_loewner_cvg}, both sequences $\{\gamma^n\}_n$ and $\{\eta^n\}_n$ are tight. We can extract subsequence, which we still denote by $\{\gamma^n\}_n$ and $\{\eta^n\}_n$, such that 
$W^n$ converges in distribution to $W$ and $\eta^n$ converges in distribution to $\eta$ locally uniformly, and that $\eta$ satisfies the properties in Theorem~\ref{thm::chordal_loewner_cvg}. We couple them on the same probability space so that they converge almost surely. 

For the limiting process $\eta$, recall from Theorem~\ref{thm::chordal_loewner_cvg} that $W$ is its driving function and let $g_t$ be the corresponding conformal maps. Define $\theta(t)$ to be the renormalized harmonic measure of the right boundary of $\eta[0,t]$ union $[0,\phi(b)]$ seen from infinity. Fix $T>0$ large and define 
\[\|\LA^n-\LA\|_{\infty, T}:=\sup\{|\LA^n(t)-\LA(t)|: t\in [0,T]\}, \]
for $\LA=\eta, W, \theta$. 
By Proposition~\ref{pr.thetan}, we know that $\|\theta^n-\theta\|_{\infty, T}\to 0$ almost surely. 

For the limiting process $\eta$, we define the stopping times similarly. Let $T_1^{\eps}$ be the first time that $\theta$ is greater than $\eps$. Define $S_1^{\eps}$ to be the first time after $T_1^{\eps}$ that $\theta$ hits zero. Generally, for $k\ge 1$, define $T_{k+1}^{\eps}$ to be the first time after $S_k^{\eps}$ that $\theta$ exceeds $\eps$ and define $S_{k+1}^{\eps}$ to be the first time after $T_{k+1}^{\eps}$ that $\theta$ hits zero. 

The goal of Theorem~\ref{thm::cvg_interface_chordal} is to identify the law of $\eta$ and our strategy is the following: 
\begin{itemize}
\item First we argue that 
the following two processes are close: 
\[\left(\theta^n(t), T_k^{n,\eps}\le t\le S_k^{n,\eps}\right)\quad\text{and}\quad \left(\theta(t), T_k^{\eps}\le t\le S_k^{\eps}\right).\]
\item Then, using Theorem~\ref{thm::fkising_cvg_Dobrushin}, we shall argue that $\left(\theta^n(t), T_k^{n,\eps}\le t\le S_k^{n,\eps}\right)$ converges in distribution to Bessel excursion, and thus $\left(\theta(t), T_k^{\eps}\le t\le S_k^{\eps}\right)$ is a Bessel excursion.

\item Using Lemma~\ref{lem::approximate_bessel} and Lemma~\ref{lem::chordal_evolution_zeroLeb}, we conclude that $\theta$ is a Bessel process.

\item Finally, we use Lemma~\ref{lem::chordal_evolution_boundary_point} to extract $W$ from $\theta$ and conclude that $\eta$ is an $\SLE_{\kappa}(\kappa-6)$. 
\end{itemize}
 
However, it is quite delicate to make this strategy work. The first issue is that, although the processes $(\eta^n, W^n, \theta^n)$ are close to $(\eta, W, \theta)$, we do not know a priori whether the stopping times $(T_k^{n,\eps}, S_k^{n,\eps})$ are close to the stopping times $(T_k^{\eps}, S_k^{\eps})$. This will be proved in Proposition~\ref{pr.ST} and this turns out to be slightly more technical than one might expect. 

For the second item, the issue is that one needs to argue the processes are not moving much for $\theta^n$ and $\theta$ on 
\[
[T_k^{n,\eps} \wedge T_k^{\eps} , T_k^{n,\eps}  \vee  T_k^{\eps}].
\]
This issue will be solved by equicontinuity, see Section~\ref{subsec::cvg_oneBesselexcursion}.

Another issue concerns the convergence of conditional distribution, or the passage of Markov property to the limit. 
In the discrete, the exploration process $\eta^n$ has domain Markov property 
 and we know $\eta^n$ converges to $\eta$. But the domain Markov property does not pass to $\eta$ for free, as it was pointed out in \cite[Proposition~4.2 and Section~5]{SchrammFirstSLE} in the setting of loop-erased random walk. 
For simplicity, we first discuss the following two pieces
\[X^n:=(\eta^n(t), 0\le t\le T_1^{n,\eps})\quad\text{and}\quad Y^n:=(\eta^n(t), T_1^{n,\eps}\le t\le S_1^{n,\eps}).\] 

Define the conformal map $G^n(\cdot):=g^n_{T_1^{n,\eps}}(\cdot)-W^n(T_1^{n,\eps})$. Note that $G^n$ is a measurable function of $X^n$. In the limiting process $\eta$, we define
\[X:=(\eta(t), 0\le t\le T_1^{\eps})\quad\text{and}\quad Y:=(\eta(t), T_1^{\eps}\le t \le S_1^{\eps}).\]
Define the conformal map $G(\cdot):=g_{T_1^{\eps}}(\cdot)-W(T_1^{\eps})$ and note again that $G$ is a measurable function of $X$. 
At this point, we have $\eta^n\to \eta$ and $S^{n,\eps}_1\to S_1^{\eps}$, and hence we have
the convergence of the concatenation of $(X^n, Y^n)$ to the concatenation of $(X, Y)$ in the metric~\eqref{eqn::metric_space_curves},
and we want to argue that the conditional law of $Y_n$ given $X_n$ converges to the conditional law of $Y$ given $X$. 
However, this is generally false without further condition on $(X^n, Y^n)$, see for example the discussion in \cite{EimearConvergenceDistributionConditionalExpectations}. 

In our setting, we do have further properties below on the pair $(X^n, Y^n)$ which allow us to conclude. 
\begin{itemize}
\item As $(\eta^n, W^n, \theta^n)$ converges to $(\eta, W, \theta)$ and $T_1^{n,\eps}\to T_1^{\eps}$ almost surely, we see $G^n\to G$ in Carath\'{e}odory sense. (This follows for example from Caratheodory kernel theorem).
As $\eta^n\to \eta$, together with equicontinuity and the fact that $T_1^{n,\eps}\to T_1^{\eps}$ a.s. we obtain that $X^n$ converges to $X$.
Consider the $G^n\left(\eta^n|_{t\ge T_1^{n,\eps}}\right)$. The collection 
$\left\{G^n\left(\eta^n|_{t\ge T_1^{n,\eps}}\right)\right\}_n$ satisfies Condition C2 due to Theorem~\ref{thm::rsw_strong}, hence it is tight in the topologies in Theorem~\ref{thm::chordal_loewner_cvg}. Combining with the equicontinuity and $S_1^{n,\eps}\to S_1^{\eps}$, we will show in Lemma~\ref{lem::cvg_GnYn} that $G(Y)$ is the only possible subsequential limit, thus $G^n(Y^n)$ converges to $G(Y)$.

\item Since $Y^n$ is an exploration path in $\HH\setminus X^n$ with Dobrushin boundary condition (stopped at the disconnection time), using Theorem~\ref{thm::fkising_cvg_Dobrushin} and Lemma~\ref{lem::sle_targetindep}, we will obtain in Section~\ref{subsec::cvg_oneBesselexcursion}
that $G^n(Y^n)$ converges to an $\SLE_{\kappa}(\kappa-6)$ in $\HH$ from $0$ to $\infty$ with force point at $\eps$ (stopped at the disconnection time). 
This will be the purpose of Proposition~\ref{prop::cvg_oneBesselexcursion} and the key point there will be that 
$G(Y)$ is independent of $X$. 
\end{itemize}

Going back to the random process $\theta$, we will see in Section~\ref{subsec::cvg_oneBesselexcursion} that by combining these three observations,  one obtains: $(\theta(t), 0\le t\le T_1^{\eps})$ and 
$(\theta(t), T_1^{\eps}\le t\le S_1^{\eps})$ are independent and $(\theta(t), T_1^{\eps}\le t\le S_1^{\eps})$ has the law of Bessel excursion.

For general $k\ge 1$, the above argument applies to the following two pieces 
\[(\eta^n(t), 0\le t\le T_k^{n,\eps})\quad\text{and}\quad (\eta^n(t), T_k^{n,\eps}\le t\le S_k^{n,\eps}).\]
As hinted above, Section~\ref{subsec::cvg_besselprocess} will combine the above analysis together with Lemmas~\ref{lem::approximate_bessel} and~\ref{lem::chordal_evolution_zeroLeb} in order to conclude that $\theta$ is a Bessel process.  

\subsection{Convergence of discrete stopping times to their continuous analogs}\label{ss.ST}
In this section we shall prove the following key result on $\{T_{k}^{n,\eps}, T_k^\eps, S_{k}^{n,\eps}, S_{k}^\eps\}$
\begin{proposition}\label{pr.ST}
Assume we are in the above setup where, in particular, $\eta^n \to \eta$ locally uniformly and $W^n \to W$ locally uniformly.
Then we have for any $k\geq 1$ and as $n\to \infty$, 
\begin{align*}\label{}
\begin{cases}
& T_{k}^{n,\eps} \to T_k^\eps \text{ in probability} \\
& S_{k}^{n,\eps} \to S_k^\eps \text{ in probability} 
\end{cases}
\end{align*}

We shall in fact need the following slightly more precise version. 
For any fixed $T>0$, there exists a sequence $\{\alpha_n\}_n$ converging to zero such that the following holds:
\begin{align*}\label{}
\Pb{\exists k\geq 1,\text{ s.t. } T_k^\eps \leq T -1  \text{ and } |T_{k}^{n,\eps}-T_{k}^\eps| > \alpha_n} \leq \alpha_n  \\
\Pb{\exists k\geq 1,\text{ s.t. } S_k^\eps \leq T -1  \text{ and } |S_{k}^{n,\eps}-S_{k}^\eps| > \alpha_n} \leq \alpha_n 
\end{align*}
\end{proposition}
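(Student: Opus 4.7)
The plan is to proceed by induction on $k$, using the almost-sure uniform convergence $\|\theta^n-\theta\|_{\infty,T}\to 0$ from Proposition~\ref{pr.thetan} as the main analytical input, supplemented by the strong RSW (Theorem~\ref{thm::rsw_strong}) and the curve-level convergence $\eta^n\to\eta$. Each inductive step treats $T_k^{n,\eps}\to T_k^\eps$ and then $S_k^{n,\eps}\to S_k^\eps$; since the arguments are structurally identical at every level (only shifted to restart from the previous stopping time), I focus on $k=1$. The easy direction is the lower bound: if $T_1^{n,\eps}\to t_\ast$ along a subsequence, then by continuity of $\theta^n$ and uniform convergence $\theta(t_\ast)=\lim_n\theta^n(T_1^{n,\eps})=\eps$, whence $t_\ast\geq T_1^\eps$, and the same soft argument using $\theta(s_\ast)=0$ gives $\liminf S_1^{n,\eps}\geq S_1^\eps$.

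The hard direction is $\limsup T_1^{n,\eps}\leq T_1^\eps$, which amounts to ruling out that $\theta$ touches the level $\eps$ tangentially at $T_1^\eps$. I would argue at the discrete level: at time $T_1^{n,\eps}$ the tip of $\eta^n$ (in the $\mathbb{H}$ picture) sits at distance $\Theta(\eps)$ from the image of the force point, and Lemma~\ref{l.tech} lets me build $\delta_n\mathbb{Z}^2$-quads inside $\Omega^n$ whose conformal image straddles a fixed continuous quad in a neighborhood of this configuration. Theorem~\ref{thm::rsw_strong} then gives, uniformly in $n$ and in the past, a positive lower bound on the probability of an FK-crossing that forces $\theta^n$ to exceed $2\eps$ within capacity-time $O(\eps^2)$; a standard bootstrap (restart the attempt conditionally on failure, while $\theta^n$ remains near $\eps$) upgrades this to the statement that $\theta^n$ reaches $2\eps$ within $o(1)$ of $T_1^{n,\eps}$ with probability tending to one. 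Passing to the limit via uniform convergence, $\theta$ strictly exceeds $\eps$ in every positive neighborhood of $T_1^\eps$, which yields the upper bound.

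The corresponding bound $\limsup S_1^{n,\eps}\leq S_1^\eps$ is subtler because $\theta\geq 0$ can only touch and never cross $0$, so the RSW scheme above does not apply directly at the level of $\theta$ alone. I plan instead to exploit the topological characterization: $S_1^{n,\eps}$ is the first time after $T_1^{n,\eps}$ at which $\gamma^n$ hits the arc $\partial_{bc}$. Since $\gamma^n\to\gamma$ in the topology~\eqref{eqn::metric_space_curves}, and Lemma~\ref{lem::degenerate_prime_end} provides the necessary prime-end regularity of the slit domains, hitting a fixed boundary arc is a lower-semicontinuous functional of the curve, and the strong RSW rules out tangential touches of $\partial_{bc}$ by $\gamma$. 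Combined with the fact that $\theta(t)=0$ is equivalent to the force point having been swallowed by $\eta[0,t]$, this yields $S_1^{n,\eps}\to S_1^\eps$ in probability. For the uniform-in-$k$ quantitative version I combine the above rates with an a.s.\ a priori bound $N_\eps(T)<\infty$ on the number of stopping times before $T$ (each completed excursion costs macroscopic half-plane capacity) and apply a union bound over $k\leq N_\eps(T)$, producing the sequence $\alpha_n$. The step I expect to be the main obstacle is the upper bound on $T_k^{n,\eps}$: it is the only place where soft convergence arguments break down and one must genuinely invoke the probabilistic input of FK-Ising — namely the strong RSW combined with Lemma~\ref{l.tech} — to preclude a tangential crossing of $\theta$ at the level $\eps$ in the limit.
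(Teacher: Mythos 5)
Your overall architecture (induction on $k$, a soft argument for one inequality, an RSW-based argument for the other, and a bound on the number of stopping times before $T$ to close the union bound) matches the paper's, and your soft directions $\liminf T_1^{n,\eps}\geq T_1^\eps$ and $\liminf S_1^{n,\eps}\geq S_1^\eps$ are exactly the paper's Lemma~\ref{l.limsup}. However, the central quantitative step in your hard direction is false as stated: you claim that after $T_1^{n,\eps}$ an RSW bootstrap forces $\theta^n$ to exceed $2\eps$ within $o(1)$ time \emph{with probability tending to one}. Since the limit $\theta/\sqrt{\kappa}$ is a Bessel process of dimension $3-8/\kappa\in(1,2)$, started from $\eps$ it returns to $0$ before ever reaching $2\eps$ with probability bounded away from zero, so no such statement can hold; your bootstrap breaks precisely because the proviso ``while $\theta^n$ remains near $\eps$'' fails (the excursion can simply end). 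The correct incarnation of your idea, which is the paper's estimate~\eqref{e.2r}, works at a vanishing scale $r$ around the level $\eps$ rather than at the macroscopic level $2\eps$: one introduces the monotone family $T_1^{n,\eps-2r}\leq T_1^{n,\eps}\leq T_1^{n,\eps+2r}$, applies the strong RSW in nested annuli of scale $2^j r$ around the tip at the \emph{discrete} stopping time $T_1^{n,\eps-2r}$ to get $\PP[T_1^{n,\eps+2r}-T_1^{n,\eps-2r}\geq 4^j r^2]\leq(1-c_0)^j$, and observes that on $\{\|\theta^n-\theta\|_{\infty,T}\leq r\}$ both $T_1^{n,\eps}$ and $T_1^{\eps}$ lie in $[T_1^{n,\eps-2r},T_1^{n,\eps+2r}]$. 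Sending $r\to 0$ after $n\to\infty$ gives both inequalities at once; the fixed-level variant you describe cannot be repaired without this extra parameter.

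The treatment of $S_1^{n,\eps}$ is also too vague on exactly the point the proposition exists to address. Lower semicontinuity of the hitting time of the closed arc $\partial_{bc}$ under uniform convergence of curves yields only the easy inequality $S_1^\eps\leq\liminf S_1^{n,\eps}$; for the hard direction $\limsup S_1^{n,\eps}\leq S_1^\eps$ you must rule out that $\gamma$ reaches $\partial_{bc}$ (equivalently $\theta$ reaches $0$) without $\gamma^n$ hitting it at a nearby time, and the RSW input for this must again be applied at a stopping time of the discrete curve: any event defined through the limiting curve, or through Euclidean proximity of $\gamma^n$ to the arc, either destroys the discrete domain Markov property or fails to control the harmonic measure of the remaining dual arc. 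The paper resolves this by introducing the auxiliary discrete stopping times $\tilde S_1^{n,\eps,2r}$ (first time after $T_1^{n,\eps-2r}$ that $\theta^n\leq 2r$) and $\hat S_1^{n,\eps,2r}$ (first hitting of $\partial_{bc}$ after $T_1^{n,\eps-2r}$), proving $\hat S_1^{n,\eps,2r}-\tilde S_1^{n,\eps,2r}<f(r)$ with probability at least $1-f(r)$ by the same annulus argument, and sandwiching $S_1^{n,\eps}$ and $S_1^\eps$ between them. Your proposal does not contain this mechanism, so the hard direction for $S_1^{n,\eps}$ — and hence the inductive step — is not established.
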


\begin{remark}\label{rem::ruiningMarkovproperty}
This is in the same flavour as \cite[Lemma 3.1]{WernerLecturePercolation} which would correspond to $T_k^{n,\eps} \to T_k^\eps$ a.s., the proof of which was left as a homework exercise. However, we find this exercise not that easy for the following reasons:
\begin{enumerate}
\item[(1)] First, if we stop the joint exploration paths $(\eta^n,\tilde \eta)$ at time $S$, indeed $\eta^n$ is close to disconnecting and it is tempting to conclude by some careful use of RSW. But one important issue is that $S$ is a stopping time for $\tilde \eta$ but not for $\eta^n$. Because of that, we are not allowed to use the discrete domain Markov property and a rather delicate analysis cannot be avoided it seems. 
\item[(2)] Second, stopping the curve when it is close to disconnecting $0$ from $\p \U$ needs to be done with some care. For example, being close in $\| \cdot\|_\infty$ to disconnection does not prevent from having a dual harmonic arc with large harmonic measure seen from $0$. This is why in our proof below, we rely on stopping times built from harmonic measure seen from $0$ instead of Euclidean distance from disconnection. 
\end{enumerate}
\end{remark}

Let us start as a warm-up with the following Lemma.
\begin{lemma}\label{l.FirstST}
Assume for simplicity that we are in the case where $\theta^n(0)=\theta(0)=0$ (no free arc at the beginning of the exploration). For any fixed $T\geq 2$,  We have for any $u>0$,  
\begin{align*}
\Pb{T_1^\eps \leq T-1 \text{ and } |T_1^{n,\eps} - T_1^\eps|>u} \to 0\,, \text{ as $n\to \infty$.}
\end{align*}

\end{lemma}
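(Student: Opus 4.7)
The plan is to combine the uniform convergence $\|\theta^n - \theta\|_{\infty,T} \to 0$ from Proposition~\ref{pr.thetan} with a quantitative non-degeneracy statement for $\theta^n$ near level $\eps$, obtained via the strong RSW (Theorem~\ref{thm::rsw_strong}). Decompose the event of interest as $A_n \cup B_n$, where $A_n := \{T_1^\eps \leq T-1,\,T_1^{n,\eps} < T_1^\eps - u\}$ and $B_n := \{T_1^\eps \leq T-1,\,T_1^{n,\eps} > T_1^\eps + u\}$.

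The event $A_n$ is handled by soft arguments. By continuity of $\theta$ and the definition of $T_1^\eps$, the random variable $\beta := \eps - \sup_{[0, T_1^\eps - u]} \theta$ is strictly positive a.s. on $\{T_1^\eps \leq T-1\}$. As soon as $\|\theta^n - \theta\|_{\infty,T} < \beta$, we have $\theta^n < \eps$ throughout $[0, T_1^\eps - u]$, so $T_1^{n,\eps} \geq T_1^\eps - u$, contradicting $A_n$. Hence $\PP[A_n] \to 0$.

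For $B_n$, note that on this event $\theta^n(s) < \eps$ for all $s \in [0, T_1^\eps + u]$; together with $\theta^n(T_1^\eps) \geq \theta(T_1^\eps) - \alpha_n = \eps - \alpha_n$ (with $\alpha_n := \|\theta^n - \theta\|_{\infty,T}$), this yields
\[
\sup_{s \in [T_1^\eps,\,T_1^\eps + u]} \theta^n(s) \;-\; \theta^n(T_1^\eps) \;\leq\; \alpha_n .
\]
Hence $B_n$ is contained in the event that the oscillation of $\theta^n$ on $[T_1^\eps, T_1^\eps + u]$ is at most $\alpha_n$. Since $\alpha_n \to 0$ a.s., it suffices to establish the following uniform non-degeneracy bound: for every $\rho > 0$ there exists $\zeta > 0$ such that for $n$ large enough,
\[
\PP\bigl[\,\mathrm{osc}_{[T_1^\eps,\,T_1^\eps + u]} \theta^n < \zeta,\; T_1^\eps \leq T-1\,\bigr] \;\leq\; \rho .
\]

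To prove this non-degeneracy, I would introduce auxiliary discrete stopping times $\sigma_n^{(k)} := \inf\{t : \theta^n(t) \geq \eps - 1/k\}$, which by uniform convergence of $\theta^n \to \theta$ and continuity of $\theta$ satisfy $\sigma_n^{(k)} \leq T_1^\eps$ for $n,k$ large and $\sigma_n^{(k)} \to T_1^\eps$ in the iterated limit $n\to\infty$, $k\to\infty$. At each $\sigma_n^{(k)}$ the FK-Ising exploration sits in a Dobrushin configuration in $\Omega^n \setminus \gamma^n[0,\sigma_n^{(k)}]$; working in $\HH$ via $\phi^n$, one places macroscopic quads to the right of the tip separating it from the end of the free arc, and transfers them to genuine $\delta_n \Z^2$-quads via Lemma~\ref{l.tech}. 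Theorem~\ref{thm::rsw_strong} then provides a probability $\geq p_0 > 0$, uniform in $n$ and $k$, that $\gamma^n$ carves within the next $u/N$ units of half-plane capacity a macroscopic hull whose renormalized harmonic measure contribution is $\geq \zeta_0$. Iterating this estimate over $N$ disjoint sub-intervals of $[\sigma_n^{(k)}, \sigma_n^{(k)}+u]$ (choosing $\sigma_n^{(k)}$ within $u/2$ of $T_1^\eps$ so that a positive fraction of these sub-intervals lies inside $[T_1^\eps, T_1^\eps + u]$) via the spatial Markov property bounds the probability of not witnessing a $\geq \zeta_0$ oscillation of $\theta^n$ by $(1-p_0)^{\Omega(N)}$, yielding the required non-degeneracy upon choosing $N$ and $k$ large. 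The main obstacle is precisely this RSW step: one cannot apply the discrete spatial Markov property directly at the continuous stopping time $T_1^\eps$ (which is not a stopping time for $\theta^n$), whence the need for the auxiliary stopping times $\sigma_n^{(k)}$ built from $\theta^n$ alone; and since at $\sigma_n^{(k)}$ the tip and the end of the free arc may be Euclidean-close in $\Omega^n$ after conformal distortion, discrete quads must be produced from continuous $\HH$-quads via Lemma~\ref{l.tech}.
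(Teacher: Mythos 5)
Your overall architecture matches the paper's: the ``too early'' direction follows softly from $\|\theta^n-\theta\|_{\infty,T}\to 0$ (Proposition~\ref{pr.thetan}), while the ``too late'' direction requires a quantitative RSW input applied at stopping times intrinsic to $\eta^n$ --- your $\sigma_n^{(k)}$ plays exactly the role of the paper's $T_1^{n,\eps-2r}$ --- with Lemma~\ref{l.tech} converting continuous quads in $\H$ into genuine lattice quads, and you correctly identify the central pitfall that $T_1^\eps$ is not a stopping time for $\eta^n$.

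There is, however, a gap in your iteration scheme for $B_n$. What must be ruled out is $\sup_{[T_1^\eps,T_1^\eps+u]}\theta^n\le\theta^n(T_1^\eps)+\alpha_n$, i.e.\ that $\theta^n$ fails to rise above the \emph{fixed level} $\eps$ (note also that this is an upward-excursion statement, not an oscillation statement: a large downward oscillation is useless). Your $N$ trials on the sub-intervals of $[\sigma_n^{(k)},\sigma_n^{(k)}+u]$ each produce, on success, an increase of $\theta^n$ by $\zeta_0$ \emph{relative to its value at the start of that sub-interval}; but after the first sub-interval that value is uncontrolled (it may have dropped far below $\eps-1/k$), so a success at a later trial does not force $\theta^n$ to exceed $\eps$, and the bound $(1-p_0)^{\Omega(N)}$ does not control the event you actually need. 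Moreover, if you instead insist that each trial force $\theta^n$ above the absolute level $\eps$, the relevant quads must live at spatial scale $\gtrsim\eps$, so each trial costs capacity $\gtrsim\eps^2$ and the number of trials fitting into $[0,u]$ is bounded, killing the limit $N\to\infty$. The paper's estimate \eqref{e.2r} sidesteps both problems: it works at the single stopping time $T_1^{n,\eps-2r}$, where $\theta^n$ is known to equal $\eps-2r$ up to $O(\sqrt{\delta_n})$, and iterates over dyadic \emph{spatial} annuli $A_j=2^jA_1$ around the tip rather than over time; each dual-crossing event $\LE_j$ forces the level $\eps+2r$ to be reached within capacity $4^jr^2$, giving $\Pb{T_1^{n,\eps+2r}-T_1^{n,\eps-2r}\ge 4^jr^2}\le(1-c_0)^j$, and one concludes via the two-sided sandwich $T_1^{n,\eps-2r}\le T_1^{n,\eps}\le T_1^{n,\eps+2r}$ together with $T_1^{n,\eps-2r}\le T_1^\eps\le T_1^{n,\eps+2r}$ on $\{\|\theta^n-\theta\|_{\infty,T}\le r\}$, letting $r\to 0$ at the end. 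To repair your version you would need to anchor the crossing events to the configuration at the single time $\sigma_n^{(k)}$ so that success forces $\theta^n$ above $\eps-1/k+\zeta_0$ outright, which is essentially what the paper's formulation does.
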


For any $r<\eps/2$, define the stopping times $T_1^{n, \eps-2r}$ and $T_{1}^{n, \eps+ 2r}$ exactly as $T_{1}^{n,\eps}$. By definition and monotony, one clearly has
\begin{align}\label{e.mon}
T_1^{n,\eps- 2r} \leq T_1^{n,\eps} \leq T_1^{n, \eps+ 2r}
\end{align}
Now, let us show that there exists a function $f(r)$ which goes to zero as $r\to 0$, and which is such that uniformly in $n\geq n(r)$, one has 
\begin{align}\label{e.2r}
\Pb{T_1^{n, \eps+ 2r} - T_1^{n,\eps- 2r} \geq f(r)} \leq f(r)
\end{align}

\begin{remark}\label{}
Recall the main issue in the current proof is that any interaction between $\eta$ and $\eta^n$ may ruin the domain Markov property for $\eta^n$. 
The above estimate~\eqref{e.2r} does not involve the limiting curve $\eta$ in the joint coupling and it is therefore much safer to prove such an estimate  using standard arguments. 
\end{remark}

\begin{figure}[!htp]
\begin{center}
\includegraphics[width=0.9\textwidth]{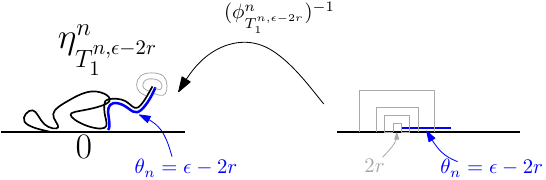}
\end{center}
\caption{Let $g^n_t$ be the conformal map from $\HH\setminus\eta^n[0,t]$ onto $\HH$ normalized at $\infty$. Define $\phi^n_t=g^n_t-W^n_t$.}\label{f.2r}
\end{figure}

\begin{proof}[Proof of Lemma~\ref{l.FirstST}]
We first prove~\eqref{e.2r}, we use the strong RSW in appropriate quads in the discrete domain $\H \setminus \eta^n[0, T_1^{n,\eps-2r}]$ which are defined as conformal images via $(\phi^n_{T_1^{n,\eps-2r}})^{-1}$ of well-chosen rectangular quads in $\H$, see Fig.~\ref{f.2r}.  Namely, denote by $K=[-r,r]\times[0,r]$ and we draw the quads $A_1=(2K)\setminus K$, and $A_j=2^j A_1$ for $j\ge 1$. From Lemma~\ref{l.tech}, we know that the extremal length of the discrete quad approximation of $(\phi^n_{T_1^{n,\eps-2r}})^{-1}(A_j)$ is bounded by universal constants. Denote by $\LE_j$ the event that there is a dual crossing in the discrete quad approximation of $(\phi^n_{T_1^{n,\eps-2r}})^{-1}(A_j)$. Then Theorem~\ref{thm::rsw_strong} gives $\PP[\LE_j]\ge c_0$ for some universal constant $c_0>0$. If $\LE_j$ holds, then 
\[T_1^{n,\eps+2r}\le T_1^{n,\eps-2r}+4^jr^2.\]
Therefore,
\[\PP[T_1^{n,\eps+2r}-T_1^{n,\eps-2r}\ge 4^j r^2]\le (1-c_0)^j. \]
This gives~\eqref{e.2r}. 

Now, using that $\eta^n \to \eta$ locally uniformly and $W^n \to W$ locally uniformly, recall we have by Proposition \ref{pr.thetan} that 
$\| \theta^n -\theta \|_{\infty, T} \to 0$ as $n\to \infty$. In particular, if we define the event 
\begin{align*}\label{}
E^{n,r}:= \{\| \theta^n -\theta \|_{\infty, T}  \leq r \}\,,
\end{align*}
then we have for any $r>0$, $\Pb{E^{n,r}} \to 1$ as $n\to \infty$. 
The main observation which remains in order to prove Lemma \ref{l.FirstST} is that on the event 
\[
 \{\| \theta^n -\theta \|_{\infty, T}  \leq r \}
\]
we must have the inequality
\[
T_1^{n,\eps-2r} \leq T_1^\eps \leq T_1^{n,\eps+2r}
\]
at least if $T_1^{n,\eps+2r}$ is not too big (it needs to be less than $T$ which it does with high probability on the event $T_1^\eps \leq T-1$ thanks to the estimate ~\eqref{e.2r}). 
This together with~\eqref{e.mon} readily implies that on the event $ \{\| \theta^n -\theta \|_{\infty, T}  \leq r \} \cap \{ T_1^{n, \eps+ 2r} - T_1^{n,\eps- 2r} < f(r) \}$, one has $|T_1^{n,\eps}-T_1^{\eps}| < f(r)$ if $T_{1}^\eps \leq T-1$. As $\liminf \Pb{E^{n,r} \cap \{ T_1^{n, \eps+ 2r} - T_1^{n,\eps- 2r} < f(r) \}} \geq 1 -f(r)$, this concludes the proof of Lemma \ref{l.FirstST} by choosing $r$ arbitrarily small. 
\end{proof}

In order to prove Proposition \ref{pr.ST}, we would like to iterate the same idea to the later stopping times $S_{k}^{n,\eps}, T_{k}^{n,\eps}$ etc.

\begin{proof}[Proof of Proposition \ref{pr.ST}]
Let us start by explaining in details how to handle the convergence of the next stopping time, i.e. $S_1^{n,\eps} \overset{\text{Prob.}}\to S_1^{\eps}$. Namely we wish to prove that for any $u>0$, 
\begin{align}\label{e.mon2}
\Pb{S_1^\eps \leq T-1 \text{ and } |S_1^{n,\eps} - S_1^\eps|>u} \to 0\,, \text{ as $n\to \infty$.}
\end{align}
To prove this, we face two (slight) technical difficulties:
\begin{enumerate}
\item[(1)] The first one is that $S_1^{n,\eps}$ will be close to $S_1^\eps$ only if the earlier stopping times $T_1^{n,\eps}$ and $T_1^\eps$ will be close as well. This must appear in the proof somewhere. 
\item[(2)] The second issue is that there is no monotonicity such as the one we used above (namely, $T_1^{n,\eps-2r} \leq T_1^\eps \leq T_1^{n,\eps+2r}$). We will still have an analog of the left inequality, but the R.H.S will be replaced by the inequality $S_1^\eps \leq \liminf_{n\to \infty} S_1^{n,\eps}$ which can be seen as a deterministic statement given the fact that $\theta^n \to \theta$ uniformly on $[0,T]$. 
\end{enumerate}

Let us introduce the following stopping times which will have useful monotony properties:
\begin{align*}\label{}
\begin{cases}
& \tilde S_1^{n,\eps,2r}:= \inf\{ t > T_1^{n,\eps- 2r}, \text{s.t. } \theta^n(t) \leq 2r \} \\
& \hat S_1^{n,\eps,2r}:= \inf\{ t > T_1^{n,\eps- 2r}, \text{s.t. $\gamma^n$ hits the boundary arc $\partial_{bc}$} \}
\end{cases}
\end{align*}
Note first that it always the case that  
\begin{align}\label{e.monoS1}
\tilde S_1^{n,\eps,2r} \leq S_1^{n,\eps}
\end{align}
Also, note that on the event $\{ S_1^\eps \leq T-1\} \cap \{ \| \theta^n -\theta \|_{\infty, T}  \leq r \}$, we have that
\begin{align}\label{e.monoS2}
\tilde S_1^{n,\eps,2r} \leq S_1^{\eps}
\end{align}
Furthermore, exactly as for the estimate~\eqref{e.2r}, one can prove in the same fashion that there exists a function $f(r)$ which goes to zero as $r\to 0$, and which is such that uniformly in $n\geq n(r)$, one has 
\begin{align}\label{e.hat2r}
\Pb{\hat S_1^{n, \eps,2r} - \tilde S_1^{n,\eps,2r} \geq f(r)} \leq f(r)  
\end{align}
Finally, using the estimate~\eqref{e.2r} as well as the equicontinuity of the set of functions $\{ \theta^n \}_n$ restricted to the interval $[0,T]$ (this equicontinuity follows form the uniform convergence of $\theta^n$ towards the continuous $\theta$), we deduce that for $n$ large enough, 
\begin{align}\label{e.equic}
\Pb{\hat S_1^{n,\eps, 2r} = S_1^{n,\eps}} \geq 1 - 2 \, f(r)\,.
\end{align}
Indeed, one term $f(r)$ comes from the possibility that $T_1^{n, \eps+ 2r} \gg T_1^{n,\eps-2r}$ which could prevent the above equality to hold and is dominated thanks to~\eqref{e.2r}, the second term $f(r)$ comes from the unlikely event that $\gamma^n$ would hit the arc $\p_{bc}$ strictly between $T_1^{n, \eps- 2r}$ and  $T_1^{n,\eps}$. This possibility is easily controlled using the equicontinuity of $\{\theta^n\}_n$.  
Combining the above four estimates \eqref{e.monoS1}, \eqref{e.monoS2}, \eqref{e.hat2r}, \eqref{e.equic}, we obtain that for any $u>0$, 
\begin{align*}\label{}
\Pb{S_1^{\eps} - S_1^{n,\eps} < -u} \to 0 \text{  as $n\to \infty$.}
\end{align*}

For the other direction, we rely on a completely different argument (already suggested in \cite[Lemma 3.1]{WernerLecturePercolation}) which is based on the Lemma stated below. Indeed it readily implies that 
\begin{align*}\label{}
\Pb{S_1^{\eps} - S_1^{n,\eps} > +u} \to 0 \text{  as $n\to \infty$.}
\end{align*}
which concludes our proof at least for $S_1^{n,\eps}\overset{\text{Prob.}}\to S_1^{\eps}$. 
\end{proof}

\begin{lemma}\label{l.limsup}
On the event $S_1^\eps \leq T-1$, 
\[
S_1^\eps \leq \liminf S_1^{n,\eps}
\]
\end{lemma}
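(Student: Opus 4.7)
The plan is to proceed by contradiction. Suppose that on an event of positive probability inside $\{S_1^\eps \leq T-1\}$ one has $\liminf_n S_1^{n,\eps} = s < S_1^\eps$, and extract a subsequence $(n_k)$ so that $S_1^{n_k,\eps} \to s$. Since $s < S_1^\eps \leq T-1 < T$, we have $S_1^{n_k,\eps} < T$ for $k$ large, so every convergence statement will take place inside $[0,T]$ where we have the uniform control from Proposition~\ref{pr.thetan}.

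The crucial input is the identity $\theta^n(S_1^{n,\eps}) = 0$ for every $n$. Indeed, by definition $\gamma^n$ touches the arc $\partial_{bc}$ at time $S_1^{n,\eps}$, which in the $\H$-picture means $\eta^n(S_1^{n,\eps}) \in [\phi^n(b^n),+\infty)$. The union of the right side of $\eta^n[0,S_1^{n,\eps}]$ with the free-arc segment $[0,\phi^n(b^n)]$ therefore lies on the boundary of a bounded swallowed component of $\H \setminus \eta^n[0,S_1^{n,\eps}]$ and not on the boundary of the unbounded component; hence its renormalized harmonic measure seen from $\infty$ vanishes. (Equivalently, in the Loewner picture, $S_1^{n,\eps}$ is exactly the swallowing time of $\phi^n(b^n)$, at which $V^n_t - W^n_t \to 0$.)

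With this in hand, apply the uniform convergence $\|\theta^n-\theta\|_{\infty,T}\to 0$ from Proposition~\ref{pr.thetan} together with the continuity of $\theta$:
\[
\theta(s) \;=\; \lim_{k\to\infty}\theta(S_1^{n_k,\eps}) \;=\; \lim_{k\to\infty}\theta^{n_k}(S_1^{n_k,\eps}) \;=\; 0.
\]
Next, by Lemma~\ref{l.FirstST} we have $T_1^{n,\eps}\to T_1^\eps$ in probability, and passing to a further subsequence we may assume the convergence holds almost surely. Since $S_1^{n_k,\eps} \geq T_1^{n_k,\eps}$, this forces $s \geq T_1^\eps$. But $\theta(T_1^\eps) = \eps > 0 = \theta(s)$, so in fact $s > T_1^\eps$. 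Hence $s$ is a time strictly after $T_1^\eps$ at which $\theta$ vanishes, and by the minimality in the definition of $S_1^\eps$ we conclude $S_1^\eps \leq s$, contradicting $s < S_1^\eps$.

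The only nontrivial step is the first one: justifying $\theta^n(S_1^{n,\eps}) = 0$ really does require unpacking the definition of the renormalized harmonic measure and verifying that the collision of $\gamma^n$ with $\partial_{bc}$ encloses $[0,\phi^n(b^n)]$ together with the right flank of the curve in a bounded component of $\H\setminus \eta^n[0,S_1^{n,\eps}]$. Once this is granted, the remainder of the argument is a short routine application of uniform convergence (Proposition~\ref{pr.thetan}) combined with the already-established convergence of the first stopping time (Lemma~\ref{l.FirstST}).
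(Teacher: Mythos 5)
Your proof follows essentially the same route as the paper's: extract a subsequence along which $S_1^{n_k,\eps}$ converges to some $s<S_1^\eps$, use the uniform convergence $\|\theta^n-\theta\|_{\infty,T}\to 0$ together with the continuity of $\theta$ to show $\theta(s)=0$, check that $s> T_1^\eps$, and contradict the minimality of $S_1^\eps$. The one point where you diverge is the claim $\theta^n(S_1^{n,\eps})=0$ exactly, which you correctly identify as the crux: the paper does not assert this, because the discrete exploration path lives on the medial lattice, so ``hitting $\partial_{bc}$'' only brings $\gamma^n$ within $O(\delta_n)$ of the boundary arc and the free arc need not be literally enclosed in a swallowed component at time $S_1^{n,\eps}$. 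The paper instead bounds $\theta^{n_k}(S_1^{n_k,\eps})\le 10\sqrt{\delta_{n_k}}$ via Beurling's estimate (this is also why Definition~\ref{d.stop} imposes $\eps\ge 10\sqrt{\delta_n}$), which is all that is needed since your argument only uses $\theta^{n_k}(S_1^{n_k,\eps})\to 0$. With that substitution your proof is correct; your appeal to Lemma~\ref{l.FirstST} to obtain $s\ge T_1^\eps$ is a harmless variant of the paper's more direct use of $|\theta^{n_k}(T_1^{n_k,\eps})-\eps|\le 10\sqrt{\delta_{n_k}}$ along the extracted subsequence.
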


\begin{proof}
Let us argue by contradiction. Suppose this is not the case, then there exists $\alpha>0$ such that for infinitely many $n_k\in \N$, 
\[
S_1^{n_k,\eps} \leq S_1^\eps - \alpha\,.
\]
Using Beurling's estimate one has
\begin{align*}\label{}
\begin{cases}
&|\theta^{n_k}(T_1^{n_k,\eps}) - \eps| \leq 10\sqrt{\delta_{n_k}} \\
&\theta^{n_k}(S_1^{n_k,\eps})\le 10\sqrt{\delta_{n_k}}
\end{cases}
\end{align*}
By possibly further extracting so that $T_1^{n_k,\eps}$ and $S_1^{n_k,\eps}$ both converge and using the fact that $\theta^{n_k}$ converges uniformly to $\theta$ on $[0,T]$, we thus reach a contradiction, as $S_1^\eps$ should then be smaller than $\lim S_1^{n_k,\eps}$. 
\end{proof}

\begin{proof}[Proof of Proposition \ref{pr.ST} continued]
For the general case, $k\geq 2$, we can proceed inductively on $k\geq 1$. The induction hypothesis being that indeed, $T_j^{n,\eps}\overset{\text{Prob.}}\to T_j^{\eps}$ and $S_j^{n,\eps}\overset{\text{Prob.}}\to S_j^{\eps}$ for all $j \leq k-1$. Then, to propagate the induction hypothesis, we proceed as follows: say we have proved all stopping times converge in probability all the way to $T_k^{n,\eps}\overset{\text{Prob.}}\to T_k^{\eps}$ and we wish to control the next one, i.e. $S_k^{n,\eps}\overset{\text{Prob.}}\to S_k^{\eps}$. For the lower bound, we set up the following stopping times:
\begin{align*}\label{}
\begin{cases}
& \tilde T_2^{n,\eps,2r}:= \inf\{ t> \tilde S_1^{n,\eps,2r}, \theta^n(t)=\eps-2r\} \\
& \hat T_2^{n,\eps,2r}:= \inf\{ t> \tilde S_1^{n,\eps,2r}, \theta^n(t)=\eps+2r\} \\
& \tilde S_2^{n,\eps,2r}:= \inf\{ t> \tilde T_2^{n,\eps,2r}, \theta^n(t)= 2r\} \\ 
& \hat S_2^{n,\eps,2r}:= \inf \{ t>\tilde T_2^{n,\eps,2r},  \text{s.t. $\gamma^n$ hits the boundary arc $\partial_{bc}$}\} \\
& \tilde T_3^{n,\eps,2r}:= \inf\{ t> \tilde S_2^{n,\eps,2r}, \theta^n(t)=\eps-2r\} \\
& \ldots
\end{cases}
\end{align*}
The advantage of these definitions is that stopping times $\tilde T_k$ and $\hat S_k$ (resp. $\tilde S_k$ and $\hat S_k$) are close with high probability, and the following monotonies always hold:
\begin{align*}\label{}
& \tilde T_k^{n,\eps,2r}:= \inf\{ t> \tilde S_{k-1}^{n,\eps,2r}, \theta^n(t)=\eps-2r\}  \leq T_k^\eps \\
& \tilde S_k^{n,\eps,2r}:= \inf\{ t> \tilde T_k^{n,\eps,2r}, \theta^n(t)=\eps-2r\}  \leq S_k^\eps .
\end{align*}

The same proof as the one above implies that for all $k\geq 2$ and $u>0$, 
\begin{align*}\label{}
\Pb{T_k^{\eps} - T_k^{n,\eps} < -u}\vee \Pb{S_k^{\eps} - S_k^{n,\eps} < -u}  \to 0 \text{  as $n\to \infty$.}
\end{align*}

Now, for the upper bound, exactly as in Lemma \ref{l.limsup}, one has for all $k\geq 2$, 
\[ T_k^\eps \leq \liminf T_k^{n,\eps} ,\quad S_k^\eps \leq \liminf S_k^{n,\eps}\]
which concludes the proof that one can iterate from $j\leq k-1$ to $k$ in the same way as for $S_1^{n,\eps} \overset{\text{Prob.}}\to S_1^{\eps}$ above. 

To conclude our proof of Proposition \ref{pr.ST}, one still need to handle a  potentially large number of stopping times. Indeed the main statement in Proposition \ref{pr.ST} provides a control on ALL stopping times $T_k^{n,\eps}$ or $S_k^{n,\eps}$ which arise below $T$. To conclude, we thus rely once again on the equicontinuity of $\{\theta^n\}_n$ on $[0,T]$ (which again follows from $\theta^n \to \theta$ uniformly on $[0,T]$). In particular, there is a random $\delta=\delta(\omega,\eps)>0$ a.s., s.t. for all $n\geq 1$ and any $0\leq s < t \leq T$, with $|s-t| < \delta$
\[
|\theta^n(s) - \theta^n(t)| < \eps/2\,.
\]
This implies readily that one cannot have more than $T \delta^{-1}$ stopping times before time $T$. Now by combining the fact that $\Pb{\delta(\omega,\eps) > \alpha}\to 1$ as $\alpha \searrow 0$ and a straightforward union bound argument, we conclude the proof of Proposition \ref{pr.ST} with a choice of $\{\alpha_n\}_n$ converging sufficiently slowly to zero. 
\end{proof}

\subsection{Convergence in law to one Bessel excursion}
\label{subsec::cvg_oneBesselexcursion}

In this section, we will show the following proposition.   
\begin{proposition}\label{prop::cvg_oneBesselexcursion}
The law of $(\theta(t)/\sqrt{\kappa}, T_1^{\eps}\le t\le S_1^{\eps})$ is the same as  a Bessel process of dimension $3-8/\kappa$ starting from $\eps$ and stopped when it reaches zero where $\kappa=16/3$. Moreover, it is independent of $(\theta(t), t\le T_1^{\eps})$. 
\end{proposition}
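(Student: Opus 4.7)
The plan is to exploit the discrete spatial Markov property of FK-Ising in the image domain under $G^n$, identify the limiting conditional law via the Dobrushin convergence theorem, and observe that this limiting conditional law is deterministic, so that independence in the scaling limit is automatic.

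First I would check that the pieces converge in the relevant topologies: using $\eta^n \to \eta$ locally uniformly, the convergence $T_1^{n,\eps} \to T_1^\eps$ in probability (Proposition~\ref{pr.ST}), and equicontinuity of $\{\eta^n\}$, one gets $X^n \to X$ in the curve topology; using $W^n(T_1^{n,\eps}) \to W(T_1^\eps)$ and Carath\'{e}odory's kernel theorem applied to the hulls $\eta^n[0, T_1^{n,\eps}]$, one gets $G^n \to G$ in the Carath\'{e}odory sense; and $\theta^n \to \theta$ locally uniformly by Proposition~\ref{pr.thetan}. Next, the collection $\{G^n(Y^n)\}_n$ inherits Condition~C2 from $\{\eta^n\}_n$ (via the strong RSW of Theorem~\ref{thm::rsw_strong} transported to the image half-plane), so Theorem~\ref{thm::chordal_loewner_cvg} gives tightness. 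Combining the convergence $S_1^{n,\eps} \to S_1^\eps$ with equicontinuity, a standard identification argument (the content of the announced Lemma~\ref{lem::cvg_GnYn}) shows that every subsequential limit equals $G(Y)$, hence $G^n(Y^n) \to G(Y)$.

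Second, to identify the law, I would invoke the discrete spatial Markov property of FK-Ising: conditionally on the exploration up to $T_1^{n,\eps}$, the image $G^n(Y^n)$ is a Dobrushin exploration path in $G^n(\H \setminus \eta^n[0, T_1^{n,\eps}])$ from $0$ to $G^n(\phi^n(c^n))$, run up to hitting the image of the arc $\partial^n_{bc}$. Since $G^n(\phi^n(b^n)) \to \eps$ and $G^n(\phi^n(c^n)) \to \infty$ as $n \to \infty$, Theorem~\ref{thm::fkising_cvg_Dobrushin} applied in the image domain (whose degenerate prime-end character is granted by Lemma~\ref{lem::degenerate_prime_end}) yields that this conditional law converges weakly to chordal $\SLE_{16/3}$ in $\H$ from $0$ to $\infty$, stopped at the first time it disconnects $\eps$ from $\infty$. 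By the target-independence of Lemma~\ref{lem::sle_targetindep}, this is the same law as $\SLE_{16/3}(16/3-6)$ from $0$ to $\infty$ with force point $\eps$, run up to the disconnection time.

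Since this limiting conditional law is deterministic, a test-function argument applied to the joint law $(X^n, G^n(Y^n))$ (exploiting Skorokhod coupling so that convergences hold a.s., plus bounded-continuous functionals) yields that $G(Y)$ is independent of $X$ and distributed as $\SLE_{16/3}(16/3-6)$ from $0$ to $\infty$ with force point $\eps$, stopped at disconnection. Finally, by the computation recalled in Section~\ref{subsec::pre_chordalchain}, the renormalized harmonic measure $\theta|_{[T_1^\eps, S_1^\eps]}/\sqrt{\kappa}$ attached to such an $\SLE_\kappa(\kappa-6)$ is a Bessel process of dimension $3 - 8/\kappa$ starting from $\eps$ and stopped at zero; as $\theta|_{[T_1^\eps, S_1^\eps]}$ is a measurable functional of $G(Y)$ and $\theta|_{[0, T_1^\eps]}$ of $X$, both assertions of the proposition follow. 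The main obstacle is precisely the one flagged in Section~\ref{subsec::key_setup}, namely the non-automatic passage of conditional laws through the scaling limit; here it is circumvented only because the limiting conditional law does not depend on $X$, which is what makes the independence fall out for free.
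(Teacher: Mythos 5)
Your proposal is correct and follows essentially the same route as the paper: the same decomposition into $X^n$, $Y^n$, $G^n$, the same identification of $G^{n_j}(Y^{n_j})\to G(Y)$ via Condition C2 and the driving functions, the same use of Theorem~\ref{thm::fkising_cvg_Dobrushin} plus target-independence to identify the conditional limit, and the same observation that independence follows because the limiting conditional law is deterministic (the paper's only additional bookkeeping is extracting a Borel--Cantelli subsequence so the stopping times converge a.s.\ on $\{S_1^\eps\le T-1\}$ and then letting $T\to\infty$ to remove that conditioning).
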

We will give a detailed proof of Proposition~\ref{prop::cvg_oneBesselexcursion} in this section, and most of the arguments can be applied verbatim for the future excursions.

From Proposition~\ref{pr.ST}, we have
\[
\PP\left[S_1^{\eps}\le T-1, |T_1^{n,\eps}-T_1^{\eps}|>\alpha_n, |S_1^{n,\eps}-S_1^{\eps}|>\alpha_n\right]\le \alpha_n.
\]
We may choose a subsequence $n_j\to\infty$ such that $\sum_j\alpha_{n_j}<\infty$. Then we have
\[\sum_j\PP\left[S_1^{\eps}\le T-1, |T_1^{n_j,\eps}-T_1^{\eps}|>\alpha_{n_j}, |S_1^{n_j,\eps}-S_1^{\eps}|>\alpha_{n_j}\right]<\infty.\]
By Borel-Cantelli Lemma, we have 
\[
T_1^{n_j,\eps}\to T_1^{\eps},\quad S_1^{n_j, \eps}\to S_1^{\eps},\quad\text{a.s. on }\{S_1^{\eps}\le T-1\}.
\]

\begin{lemma}\label{lem::cvg_GnYn}
Recall the definition of $X^n, Y^n, G^n$ and $X, Y, G$ as in Section~\ref{subsec::key_setup}. On the event $\{S_1^{\eps}\le T-1\}$, the process $G^{n_j}(Y^{n_j})$ converges to $G(Y)$ almost surely. 
\end{lemma}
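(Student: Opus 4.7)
The plan is to identify the limit via Loewner driving functions and invoke the tightness-plus-driving-function characterization of Theorem~\ref{thm::chordal_loewner_cvg}, rather than trying to pass to the limit inside the composition $G^{n_j} \circ Y^{n_j}$ directly (which is delicate because $Y^{n_j}$ starts on the boundary of the domain of $G^{n_j}$ and may touch it further on). Define the shifted driving functions
\[
\tilde W^n(s):=W^n(T_1^{n,\eps}+s)-W^n(T_1^{n,\eps}),\qquad \tilde W(s):=W(T_1^{\eps}+s)-W(T_1^{\eps}).
\]
Since $W^{n_j}\to W$ locally uniformly, $W$ is continuous, and $T_1^{n_j,\eps}\to T_1^{\eps}$ almost surely on $\{S_1^{\eps}\le T-1\}$, one obtains immediately that $\tilde W^{n_j}\to \tilde W$ locally uniformly on compact subsets of $[0,T]$.

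Next, observe that the curve $G^n(\eta^n(T_1^{n,\eps}+\cdot))$ is exactly the chordal Loewner chain in $\H$ (parameterized by half-plane capacity, which coincides with the original time variable since $\eta^n$ itself was so parameterized) driven by $\tilde W^n$. By the discrete domain Markov property of FK-Ising, the conditional law of $\gamma^n|_{[T_1^{n,\eps},\infty)}$ given $\gamma^n|_{[0,T_1^{n,\eps}]}$ is an exploration path in a discrete Dobrushin-type domain, and $G^n\circ \phi^n$ is a conformal map from this domain onto $(\H;0,\infty)$. The uniformity of Theorem~\ref{thm::rsw_strong} in the boundary conditions and domain shape then implies that the family $\{G^{n_j}(\eta^{n_j}(T_1^{n_j,\eps}+\cdot))\}_{j}$ satisfies Condition C2 with a constant independent of $j$ and of the conditioning. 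By Theorem~\ref{thm::chordal_loewner_cvg}(4)--(5), this tightness together with the convergence $\tilde W^{n_j}\to\tilde W$ forces the curves to converge locally uniformly on $[0,T]$ to the Loewner chain in $\H$ driven by $\tilde W$.

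The identification of this limiting chain with $G(\eta(T_1^{\eps}+\cdot))$ is a standard Loewner chain computation: for any $t\ge 0$, the driving function of $s\mapsto g_t(\eta(t+s))-W(t)$ is $W(t+\cdot)-W(t)$, which applied at $t=T_1^{\eps}$ returns exactly $\tilde W$. Hence every subsequential limit of $G^{n_j}(\eta^{n_j}(T_1^{n_j,\eps}+\cdot))$ coincides with $G(\eta(T_1^{\eps}+\cdot))$, and the whole sequence converges locally uniformly on $[0,T]$ almost surely on $\{S_1^{\eps}\le T-1\}$.

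The last step is to restrict to the correct time window. Using $S_1^{n_j,\eps}\to S_1^{\eps}$ almost surely on $\{S_1^{\eps}\le T-1\}$ together with the local uniform continuity of $G(\eta(T_1^{\eps}+\cdot))$ on $[0,T]$ (which follows from the Hölder continuity in Theorem~\ref{thm::chordal_loewner_cvg}(1)) and the local uniform convergence already established, a simple linear reparameterization of time from $[0,S_1^{n_j,\eps}-T_1^{n_j,\eps}]$ onto $[0,S_1^{\eps}-T_1^{\eps}]$ shows that
\[
G^{n_j}(Y^{n_j})=G^{n_j}\bigl(\eta^{n_j}|_{[T_1^{n_j,\eps},S_1^{n_j,\eps}]}\bigr)\longrightarrow G\bigl(\eta|_{[T_1^{\eps},S_1^{\eps}]}\bigr)=G(Y)
\]
in the metric $d_X$ of~\eqref{eqn::metric_space_curves}. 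The main obstacle is verifying that Condition C2 holds uniformly for the conformally transported, conditioned family $\{G^{n_j}(\eta^{n_j}(T_1^{n_j,\eps}+\cdot))\}_j$; this rests on the fact that Theorem~\ref{thm::rsw_strong} provides crossing estimates uniform over all boundary conditions and on the discrete domain Markov property of FK-Ising.
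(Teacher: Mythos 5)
Your proposal is correct and follows essentially the same route as the paper: tightness of the conditioned family $\{G^{n_j}(\eta^{n_j}|_{t\ge T_1^{n_j,\eps}})\}_j$ via Condition C2 and Theorem~\ref{thm::rsw_strong}, identification of the limiting driving function using $W^{n_j}\to W$ locally uniformly together with $T_1^{n_j,\eps}\to T_1^{\eps}$, the equivalence of the curve and driving-function topologies in Theorem~\ref{thm::chordal_loewner_cvg}, and finally $S_1^{n_j,\eps}\to S_1^{\eps}$ to restrict to the correct time window. The only cosmetic difference is that you phrase the identification via the shifted driving functions $\tilde W^{n_j}\to\tilde W$ directly, whereas the paper extracts a convergent subsequence and shows its driving function must agree with $W$ on $[T_1^{\eps},S_1^{\eps}]$; the content is the same.
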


\begin{proof}
Consider the sequence $\left\{G^{n_j}\left(\eta^{n_j}|_{t\ge T_1^{n_j,\eps}}\right)\right\}_j$, it satisfies Condition C2 by Theorem~\ref{thm::rsw_strong}. Then it is tight as in Theorem~\ref{thm::chordal_loewner_cvg}. Suppose $\left\{G^{n_{k_j}}\left(\eta^{n_{k_j}}|_{t\ge T_1^{n_{k_j},\eps}}\right)\right\}_j$ is a converging subsequence and the limit is $\tilde{\eta}$ with driving function $\tilde{W}$. We have the following observation.
\begin{itemize}
\item Applying Theorem~\ref{thm::chordal_loewner_cvg} to $\left\{G^{n_{k_j}}\left(\eta^{n_{k_j}}|_{t\ge T_1^{n_{k_j},\eps}}\right)\right\}_j$, we know that $\left\{W^{n_{k_j}}\right\}_j$, restricted to $\left[T_1^{n_{k_j}, \eps}, T\right]$, converges to $\tilde{W}$ locally uniformly. 
\item Applying Theorem~\ref{thm::chordal_loewner_cvg} to $\eta^n$, we know that $W^n$ converges to $W$ locally uniformly. In particular, $W^n$ converges to $W$ uniformly on $[0,T]$. The uniform convergence implies the equicontinuity of the sequence $\{W^n\}_n$ on the interval $[0,T]$. 
\item By the choice of $n_j$, we have $T_1^{n_j,\eps}\to T_1, S_1^{n_j,\eps}\to S_1^{\eps}$ almost surely on $\{S_1^{\eps}\le T-1\}$.
\end{itemize}
Combining the above three facts, we conclude that $\tilde{W}$ coincides with $W$ on the interval $[T_1^{\eps}, S_1^{\eps}]$. 
By Theorem~\ref{thm::chordal_loewner_cvg} again, $\tilde{\eta}$ is the Loewner chain generated by $(\tilde{W}(t), T_1^{\eps}\le t\le S_1^{\eps})$ and $G(Y)$ is the Loewner chain generated by $(W(t), T_1^{\eps}\le t\le S_1^{\eps})$. Thus $\tilde{\eta}$ coincides with $G(Y)$. As $G(Y)$ is the unique subsequential limit of $\{G^{n_j}(Y^{n_j})\}_j$, we conclude that $G^{n_j}(Y^{n_j})$ converges to $G(Y)$. 
\end{proof}

\begin{proof}[Proof of Proposition~\ref{prop::cvg_oneBesselexcursion}]
Recall the definition of $X^n, Y^n, G^n$ and $X, Y, G$ as in Section~\ref{subsec::key_setup}.

First, we explain the convergence of $G^{n_j}$. 
\begin{itemize}
\item Recall that $G^{n_j}$ is the conformal map from $(\HH\setminus\eta^{n_j}[0, T_1^{n_j,\eps}]; \eta^{n_j}(T_1^{n_j,\eps}),\infty)$ onto $(\HH; 0,\infty)$ and $G$ is the conformal map from $(\HH\setminus\eta[0,T_1^{\eps}]; \eta(T_1^{\eps}), \infty)$ (normalized at $\infty$). As $\eta^n\to \eta, W^n\to W$ and $T_1^{n_j,\eps}\to T_1^{\eps}$, The map $G^{n_j}\to G$ locally uniformly almost surely on $\{S_1^{\eps}\le T-1\}$ by Carath\'{e}odory kernel theorem.  
\item By Proposition~\ref{pr.thetan}, we have $\theta^n\to \theta$ locally uniformly. In particular, the sequence $\{\theta^n\}_n$ is equicontinuous on $[0,T]$. 
As $T_1^{n_j,\eps}\to T_1^{\eps}$, we conclude $\theta^{n_j}(T_1^{n_j,\eps})\to \theta(T_1^{\eps})=\eps$ almost surely on $\{S_1^{\eps}\le T-1\}$. 
\end{itemize}
Combining the above two items, we conclude that $G^{n_j}\to G$ in Carath\'{e}odory sense and the image of the dual arc under $G^{n_j}$ converges to the interval $[0,\eps]$ almost surely on $\{S_1^{\eps}\le T-1\}$. 

Since $Y^n$ is the exploration path in $\HH\setminus X^n$ with Dobrushin boundary condition. Combining the above convergence of $G^{n_j}$ and Theorem~\ref{thm::fkising_cvg_Dobrushin}, the sequence $G^{n_j}(Y^{n_j})$ converges in distribution to $\SLE_{\kappa}$ in $\HH$ from $0$ to $\eps$. By the choice of $n_j$, we also have the convergence of the disconnection time: $S_1^{n_j,\eps}\to S_1^{\eps}$ almost surely on $\{S_1^{\eps}\le T-1\}$. Therefore, $G^{n_j}(Y^{n_j})$ up to the disconnection time converges in distribution to $\SLE_{\kappa}$ in $\HH$ from $0$ to $\eps$ up to the disconnection time. By Lemma~\ref{lem::sle_targetindep}, we conclude that  $G^{n_j}(Y^{n_j})$ (up to the disconnection time) converges in distribution to $\SLE_{\kappa}(\kappa-6)$ (up to the disconnection time) in $\HH$ from $0$ to $\infty$ with force point $\eps$. 

By Lemma~\ref{lem::cvg_GnYn}, we have $G^{n_j}(Y^{n_j})\to G(Y)$ almost surely. Combining with the above analysis, we know that $G(Y)$ has the law of  $\SLE_{\kappa}(\kappa-6)$ (up to the disconnection time) in $\HH$ from $0$ to $\infty$ with force point $\eps$. As $\theta$ is the corresponding renormalized harmonic measure, it has the same law as Bessel process starting from $\eps$ stopped at the first time that it reaches zero conditioned on $\{S_1^{\eps}\le T-1\}$. As this is true for all $T\ge 2$, and $\PP[S_1^{\eps}\le T-1]\to 1$ as $T\to \infty$, we can remove the conditioning.

It thus remains to argue that $G(Y)$ is indeed independent of $X$. Let us show that for any bounded continuous functionals $f$ and $h$ on the space of continuous curves with the topology of local uniform convergence,
one has $\Eb{f(G(Y)) h(X)} = \Eb{f(G(Y))} \Eb{h(X)}$. As we have shown above that $X^{n_j} \to X$ a.s. and (Lemma \ref{lem::cvg_GnYn}) that $G^{n_j}(Y^{n_j}) \to G(Y)$ a.s., we readily have by dominated convergence theorem that 
\begin{align*}\label{}
\Eb{f(G(Y)) h(X)} & = \lim_{j \to \infty} \Eb{f(G^{n_j}(Y^{n_j})) h(X^{n_j})} \\
& = \lim_{j \to \infty} \Eb{ \Eb{f(G^{n_j}(Y^{n_j})) \md X^{n_j}} h(X^{n_j})}
\end{align*}
Now, observe that the above analysis in fact shows more than what we stated. Namely, on the event $K$ that $G^{n_j}\to G$ and $\theta^{n_j}(T_1^{n_j,\eps})\to \theta(T_1^{\eps})=\eps$ (which happens with probability one as shown above), we have as argued above by Theorem~\ref{thm::fkising_cvg_Dobrushin} and Lemma~\ref{lem::sle_targetindep} that the law of $G^{n_j}(Y^{n_j})$ (up to the disconnection time) \underline{given $X^{n_j}$} converges in distribution to $\SLE_{\kappa}(\kappa-6)$ (up to the disconnection time) in $\HH$ from $0$ to $\infty$ with force point $\eps$. This is nothing but saying that for any functional $f$ as above, one has $\Eb{f(G^{n_j}(Y^{n_j})) \md X^{n^j}} \to \Eb{f(\SLE_{\kappa}(\kappa-6))}$ almost surely on the event $K$. 
As $f$ is bounded and $\Pb{K}=1$, again by dominated convergence theorem, one has 
\begin{align*}\label{}
\Eb{f(G(Y)) h(X)} & = \lim_{j \to \infty} \Eb{ \Eb{f(G^{n_j}(Y^{n_j})) \md X^{n_j}} h(X^{n_j})} \\
& = \Eb{ \Eb{f(\SLE_{\kappa}(\kappa-6))}  h(X)} \\
&= \Eb{f(\SLE_{\kappa}(\kappa-6))} \Eb{h(X)} = \Eb{f(G(Y))} \Eb{h(X)}
\end{align*}
which thus concludes the proof. 
\end{proof}

For general $k\ge 1$, by Proposition~\ref{pr.ST}, we have 
\[\PP[S_k^{\eps}\le T-1, \exists \ell\le k \text{ s.t. }|T_{\ell}^{n,\eps}-T_{\ell}^{\eps}|>\alpha_n \text{ or }|S_{\ell}^{n,\eps}-S_{\ell}^{\eps}|>\alpha_n]\le \alpha_n.\]
As $\sum_j\alpha_{n_j}<\infty$, we have 
\[\sum_j\PP[S_k^{\eps}\le T-1, \exists \ell\le k \text{ s.t. }|T_{\ell}^{n_j,\eps}-T_{\ell}^{\eps}|>\alpha_{n_j} \text{ or }|S_{\ell}^{n_j,\eps}-S_{\ell}^{\eps}|>\alpha_{n_j}]<\infty.\]
By the Borel-Cantelli lemma, we have 
\[T_{\ell}^{n_j,\eps}\to T_{\ell}^{\eps}, \quad S_{\ell}^{n_j, \eps}\to S_{\ell}^{n_j,\eps},\quad\text{for all }\ell\le k,\quad\text{a.s. on }\{S_k^{\eps}\le T-1\}.\]
The proof of Proposition~\ref{prop::cvg_oneBesselexcursion} also works for $(\theta(t), T_k^{\eps}\le t\le S_k^{\eps})$.

\begin{corollary}\label{cor::cvg_oneBesselexcursion}
For any $k\ge 1$, the law of $(\theta(t)/\sqrt{\kappa}, T_k^{\eps}\le t\le S_k^{\eps})$ is the same as a Bessel process of dimension $3-8/\kappa$ starting from $\eps$ and stopped when it reaches zero where $\kappa=16/3$. Moreover, it is independent of $(\theta(t), t\le T_k^{\eps})$. 
\end{corollary}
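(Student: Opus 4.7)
The plan is to imitate the argument used for $k=1$ verbatim, the only new content being that we start from $T_k^{n,\eps}$ rather than from $T_1^{n,\eps}$. First I would apply Proposition~\ref{pr.ST} at level $k$, which gives the quantitative bound
\[
\PB{S_k^{\eps}\leq T-1,\ \exists\,\ell\leq k\text{ s.t. }|T_{\ell}^{n,\eps}-T_{\ell}^{\eps}|>\alpha_n\text{ or }|S_{\ell}^{n,\eps}-S_{\ell}^{\eps}|>\alpha_n}\leq \alpha_n.
\]
Pick a subsequence $\{n_j\}$ with $\sum_j\alpha_{n_j}<\infty$, so that by Borel--Cantelli the stopping times $T_\ell^{n_j,\eps}\to T_\ell^{\eps}$ and $S_\ell^{n_j,\eps}\to S_\ell^{\eps}$ almost surely for every $\ell\leq k$ on the event $\{S_k^{\eps}\leq T-1\}$. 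Combined with Proposition~\ref{pr.thetan} and the equicontinuity on $[0,T]$ that comes from the uniform convergence $\theta^{n_j}\to\theta$, this forces $\theta^{n_j}(T_k^{n_j,\eps})\to\theta(T_k^{\eps})=\eps$ on that event.

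Next I set up the natural analogs of $X^n,Y^n,G^n$ attached to time $T_k^{n,\eps}$: let
\[
X^n_k:=(\eta^n(t),\,0\leq t\leq T_k^{n,\eps}),\quad Y^n_k:=(\eta^n(t),\,T_k^{n,\eps}\leq t\leq S_k^{n,\eps}),
\]
and let $G^n_k(\cdot):=g^n_{T_k^{n,\eps}}(\cdot)-W^n(T_k^{n,\eps})$, with $X_k,Y_k,G_k$ the analogous objects for $\eta$. Since $\eta^{n_j}\to\eta$ locally uniformly, $W^{n_j}\to W$ locally uniformly, and $T_k^{n_j,\eps}\to T_k^{\eps}$, Carath\'eodory kernel convergence gives $G_k^{n_j}\to G_k$ locally uniformly on the event $\{S_k^{\eps}\leq T-1\}$. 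The proof of Lemma~\ref{lem::cvg_GnYn} transfers without change (using Theorem~\ref{thm::chordal_loewner_cvg} applied to the tight family $\{G_k^{n_j}(\eta^{n_j}|_{t\geq T_k^{n_j,\eps}})\}$, which still satisfies Condition C2 via Theorem~\ref{thm::rsw_strong}), yielding $G_k^{n_j}(Y_k^{n_j})\to G_k(Y_k)$ almost surely on that event.

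Now the critical input is that, conditionally on $X_k^{n_j}$, the curve $Y_k^{n_j}$ is nothing but the FK-Ising exploration path of the slit discrete Dobrushin domain $(\Omega^{n_j}\setminus\gamma^{n_j}[0,T_k^{n_j,\eps}];\eta^{n_j}(T_k^{n_j,\eps}),c^{n_j})$, run until it first hits the wired arc. This is the whole point of singling out the stopping times $T_k^{n,\eps}$: at that moment a macroscopic dual arc of harmonic mass $\approx\eps$ has been created, and the conformal image under $G_k^{n_j}$ of that dual arc converges to $[0,\eps]$. Therefore Theorem~\ref{thm::fkising_cvg_Dobrushin} applies in the slit domain and the conditional law of $G_k^{n_j}(Y_k^{n_j})$ given $X_k^{n_j}$, stopped at the disconnection time, converges to chordal $\SLE_\kappa$ in $\HH$ from $0$ to $\eps$ stopped at the first hit of the boundary arc beyond $\eps$. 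Lemma~\ref{lem::sle_targetindep} reinterprets this limit as an $\SLE_\kappa(\kappa-6)$ from $0$ to $\infty$ with force point at $\eps$, stopped at the disconnection time; the renormalized harmonic measure of its right side is a Bessel process of dimension $3-8/\kappa$ started at $\eps$, run until it hits $0$. This identifies the law of $(\theta(t)/\sqrt{\kappa},\,T_k^{\eps}\leq t\leq S_k^{\eps})$ conditioned on $\{S_k^{\eps}\leq T-1\}$, and letting $T\to\infty$ removes the conditioning since $\PP[S_k^{\eps}\leq T-1]\to 1$.

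Finally, for the independence from $(\theta(t),\,t\leq T_k^{\eps})$, I follow the dominated-convergence trick used at the end of the proof of Proposition~\ref{prop::cvg_oneBesselexcursion}. For bounded continuous functionals $f,h$,
\[
\Eb{f(G_k(Y_k))h(X_k)}=\lim_j \Eb{\Eb{f(G_k^{n_j}(Y_k^{n_j}))\md X_k^{n_j}}\,h(X_k^{n_j})},
\]
and the inner conditional expectation converges almost surely to the constant $\Eb{f(\SLE_\kappa(\kappa-6))}$, giving factorization. The main obstacle, as in the $k=1$ case, is precisely this passage of the discrete spatial Markov property through the limit: the RSW input of Theorem~\ref{thm::rsw_strong} (used to keep Condition C2 in the slit domains, which have unpredictable geometry) and the Carath\'eodory convergence of $G_k^{n_j}$ to $G_k$ together make the conditional Dobrushin convergence Theorem~\ref{thm::fkising_cvg_Dobrushin} applicable inside the slit discrete domain, and this is what makes the inductive step work.
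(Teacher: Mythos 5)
Your proposal is correct and follows essentially the same route as the paper: the paper likewise extracts a subsequence via Proposition~\ref{pr.ST} and Borel--Cantelli to get almost sure convergence of all stopping times up to level $k$ on $\{S_k^{\eps}\le T-1\}$, and then simply notes that the proof of Proposition~\ref{prop::cvg_oneBesselexcursion} applies verbatim to the $k$-th excursion. Your write-up in fact supplies more detail (the analogs $X_k^n, Y_k^n, G_k^n$, the transfer of Lemma~\ref{lem::cvg_GnYn}, and the Dobrushin exploration in the slit domain) than the paper's one-line reduction.
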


\subsection{Convergence in law to a Bessel process}\label{ss.Bessel}
\label{subsec::cvg_besselprocess}
In this section, we will prove in Proposition~\ref{prop::cvg_Besselprocess} that $\theta$ considered as a whole process is indeed a Bessel process and we will complete the proof of Theorem~\ref{thm::cvg_interface_chordal}.
\begin{proposition}\label{prop::cvg_Besselprocess}
The law of $(\theta(t)/\sqrt{\kappa}, t\ge 0)$ is the same as that of a Bessel process of dimension $3-8/\kappa$ where $\kappa=16/3$. 
\end{proposition}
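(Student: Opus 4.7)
The plan is to apply the approximate-Bessel criterion of Lemma~\ref{lem::approximate_bessel} directly to $\tilde X_t := \theta(t)/\sqrt{\kappa}$ with dimension $d = 3-8/\kappa = 3/2 \in (1,2)$ for $\kappa=16/3$. Since the stopping times $T_k^\eps, S_k^\eps$ defined in Section~\ref{subsec::key_setup} using threshold $\eps$ for $\theta$ coincide as random times with the stopping times $\tilde T_k^{\eps'}, \tilde S_k^{\eps'}$ in Lemma~\ref{lem::approximate_bessel} for $\tilde X$ with threshold $\eps' := \eps/\sqrt{\kappa}$, it suffices to verify the two hypotheses of that lemma for $\tilde X$.

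Hypothesis (1) is a direct consequence of Corollary~\ref{cor::cvg_oneBesselexcursion}: for every $k \ge 1$ and every $\eps > 0$, the excursion $(\theta(t)/\sqrt{\kappa}, T_k^\eps \le t \le S_k^\eps)$ has the law of a Bessel process of dimension $3-8/\kappa$ started from $\eps/\sqrt{\kappa}$ and stopped on hitting zero, and it is independent of the past $(\theta(t), t \le T_k^\eps)$. Thus on each interval $[T_k^\eps, S_k^\eps]$ the process $\tilde X$ evolves according to the Bessel SDE~\eqref{eqn::sde_bessel} independently of its pre-$T_k^\eps$ history, which is exactly the structural assumption of Lemma~\ref{lem::approximate_bessel}(1).

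For hypothesis (2), I would argue via Lemma~\ref{lem::chordal_evolution_boundary_point}. By Lemma~\ref{lem::processvsrhm} the limit curve $\eta$ satisfies $\Leb(\eta \cap \R)=0$ and is generated by a continuous driving function $W$; hence Lemma~\ref{lem::chordal_evolution_boundary_point} applies and yields the integral equation
\begin{equation*}
X_t = \int_0^t \frac{2\,ds}{X_s - W_s},\qquad X_0 = 0^+,
\end{equation*}
where $X_t$ is the rightmost point of $g_t(\eta[0,t]) \cap \R$. The integrand is non-negative and $X_t$ is finite for every $t$, so $\Leb\{s : X_s = W_s\} = 0$ almost surely. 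By monotonicity of $g_t$ on $\R$, the right endpoint $V_t$ of the image of the right side of $\eta[0,t] \cup [0,\phi(b)]$ always satisfies $V_t \ge X_t$, hence $\theta(t) = V_t - W_t \ge X_t - W_t$, and so $\{t : \theta(t) = 0\} \subset \{t : X_t = W_t\}$ has zero Lebesgue measure as required. Lemma~\ref{lem::approximate_bessel} then yields the proposition.

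All of the genuine difficulty has already been absorbed into the work of Sections~\ref{ss.ST} and~\ref{subsec::cvg_oneBesselexcursion}. The independence encoded in hypothesis~(1) is not merely a law-level statement about individual excursions but a true independence at the random time $T_k^\eps$; transporting it from the discrete domain Markov property for $\eta^n$ to the continuum limit required both the convergence of stopping times (Proposition~\ref{pr.ST}) and the careful handling of the convergence of conditional distributions (Proposition~\ref{prop::cvg_oneBesselexcursion}). With those in hand, the remaining step described here is essentially a packaging argument, and the only truly new observation to record at this stage is the geometric one that $\theta$ cannot vanish on a set of positive Lebesgue measure.
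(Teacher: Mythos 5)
Your proof is correct and follows the same overall strategy as the paper's: apply Lemma~\ref{lem::approximate_bessel} to $\theta/\sqrt{\kappa}$, with hypothesis~(1) supplied by Corollary~\ref{cor::cvg_oneBesselexcursion}. The only divergence is in how hypothesis~(2) is verified. The paper gets it in one line from Lemma~\ref{lem::chordal_evolution_zeroLeb}: since $\theta(t)=0$ forces the tip of the curve to lie on $\R$, one has $\Leb\{t:\theta(t)=0\}\le\Leb\{t:\eta(t)\in\R\}=0$. You instead invoke Lemma~\ref{lem::chordal_evolution_boundary_point} (legitimately, via Lemma~\ref{lem::processvsrhm}) and observe that finiteness of $\int_0^t 2\,ds/(X_s-W_s)$ forces $\Leb\{s:X_s=W_s\}=0$, which combined with $\theta\ge X-W\ge 0$ gives the claim. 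Both arguments are valid and of comparable length; yours avoids having to justify the inclusion $\{t:\theta(t)=0\}\subset\{t:\eta(t)\in\R\}$ (which requires a small prime-end argument the paper leaves implicit), at the cost of calling on Lemma~\ref{lem::chordal_evolution_boundary_point}, which the paper reserves for the final step of extracting the SDE for $W$ from the Bessel law of $\theta$.
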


\begin{proof}
In order to apply Lemma~\ref{lem::approximate_bessel} to the process $\theta/\sqrt{\kappa}$, we need to check the two requirements there. The first item holds due to Corollary~\ref{cor::cvg_oneBesselexcursion}. For the second item, Theorem~\ref{thm::chordal_loewner_cvg} guarantees that $\eta$ is a continuous curve with continuous driving function. By Lemma~\ref{lem::chordal_evolution_zeroLeb}, we see $\Leb\{t: \theta(t)=0\}\le \Leb\{t: \eta(t)\in\R\}=0$
which confirms the second item.
\end{proof}

\begin{proof}[Proof of Theorem~\ref{thm::cvg_interface_chordal}]
Recall the notations at the beginning of Section~\ref{subsec::key_setup}. By Theorem~\ref{thm::rsw_strong}, the collection of interfaces $\{\eta^{\delta}\}_{\delta}$ satisfies Condition C2. 
By Theorem~\ref{thm::chordal_loewner_cvg}, the sequence is tight. Suppose $\delta_n\to 0$ and $\{\eta^{\delta_n}\}_n$ is a convergent subsequence and the limit is denoted by $\eta$. Theorem~\ref{thm::chordal_loewner_cvg} also gives that $\eta$ is a continuous curve with continuous driving function $W$. We denote by $\theta_t$ the renormalized harmonic measure of the right side of $\eta[0,t]$ union $[0,\phi(b)]$ in $\HH\setminus\eta[0,t]$ seen from infinity.
By Lemma~\ref{lem::processvsrhm}, we can apply Lemma~\ref{lem::chordal_evolution_boundary_point} to $\eta$, thus
\[\theta_t+W_t=\int_0^t\frac{2ds}{\theta_s},\quad\forall t\ge 0.\]
By Proposition~\ref{prop::cvg_Besselprocess}, we know that $\theta(t)/\sqrt{\kappa}$ is a Bessel process of dimension $3-8/\kappa$. Thus $(W_t, \theta(t)+W_t: t\ge 0)$ solves~\eqref{eqn::chordal_sle_sde}, i.e. by setting $V_t=\theta_t+W_t$, we have 
\[dW_t=\sqrt{\kappa}dB_t+\frac{(\kappa-6)dt}{W_t-V_t},\quad dV_t=\frac{2dt}{V_t-W_t}.\]
Thus $\eta$ is an $\SLE_{\kappa}(\kappa-6)$. As $\SLE_{\kappa}(\kappa-6)$ is the only subsequential limit, we have the convergence of the sequence. 
\end{proof}

\section{Detailed sketch of the convergence to radial $\SLE_{16/3}(16/3-6)$ and the one-arm exponent}
\label{sec::discussion_radial}

The goal of this section is to give a detailed sketch of the different steps needed to adapt the proof in the chordal case to the radial case. This should not be considered a complete proof, in particular in the case of item 4) below whose complete proof would require more topological details. In Subsection \ref{ss.onsager}, we sketch how to derive Onsager's exponent $1/8$ from the convergence of the radial exploration process.

We give a brief discussion on Bernoulli percolation and its one-arm exponent here. The Bernoulli percolation exploration path was introduced by Schramm and was conjectured to converge to $\SLE_6$ in~\cite{SchrammFirstSLE}. Shortly afterwards, Smirnov proved \cite{SmirnovPercolationConformalInvariance} the conformal invariance of the scaling limit of crossing probabilities for Bernoulli site percolation on triangular lattice---Cardy's formula---and gave a strategy to prove convergence to $\mathrm{SLE}_6$. A detailed proof of the convergence towards chordal $\SLE_6$ was provided by Camia-Newman in~\cite{CamiaNewmanPercolation} (see also the lecture notes of Werner \cite{WernerLecturePercolation} which provide a proof closer to the strategy given in \cite{SmirnovPercolationConformalInvariance}). As pointed in~\cite{CamiaNewmanPercolation}, the one-arm exponent of Bernoulli site percolation~\cite{LawlerSchrammWernerOneArmExponent} requires the convergence of the exploration path in the radial case, which does not come for free from the chordal case; and they addressed this gap in \cite{CamiaNewmanPercolationFull}. These technical difficulties in Bernoulli percolation also appear in the case of FK-Ising percolation. On top of them, in FK-Ising percolation, we also need to treat the convergence of renormalized harmonic measure process.

\subsection{On the convergence to radial $\SLE_{16/3}(16/3-6)$}

One possible way to obtain the convergence to radial $\SLE_{16/3}(16/3-6)$ would be to design and analyse a discrete parafermionic observable well-adapted to a radial exploration process. This would be in some sense the approach followed in \cite{KemppainenSmirnovBoundaryTouchingLoopsFKIsing, KemppainenSmirnovFullLimitFKIsing}. Once one has the convergence to chordal $\SLE_{16/3}(16/3-6)$, another natural route is to extract the convergence to radial $\SLE_{16/3}(16/3-6)$ using the fact that the chordal $\SLE_{16/3}(16/3-6)$ is \textbf{target-independent} (Lemma \ref{lem::sle_targetindep}). Even tough very natural, this strategy does not come for free and the following steps need to be addressed in order to prove the convergence to radial $\SLE_{16/3}(16/3-6)$: 

\begin{figure}[ht!]
\begin{center}
\includegraphics[width=0.6\textwidth]{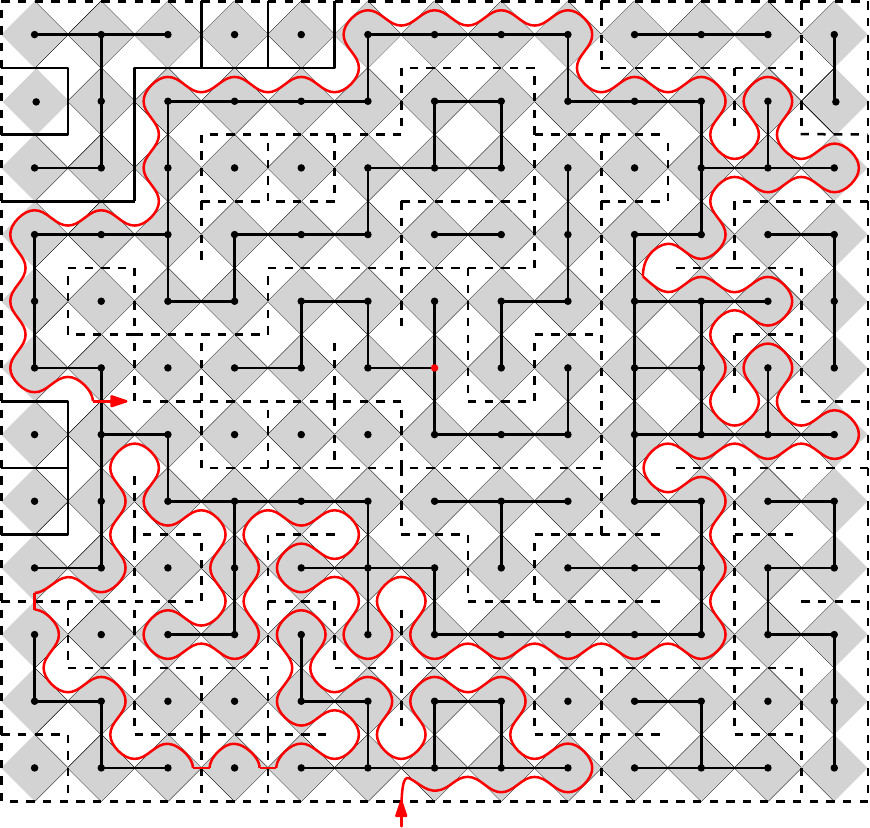}
\end{center}
\caption{\label{fig::fkinterface_radial}}
\end{figure}

\begin{enumerate}
\item First, the powerful topological setup from \cite{KemppainenSmirnovRandomCurves} needs to be adapted to radial curves. In particular, one needs to show radial analogs of statements such as Theorem~\ref{thm::chordal_loewner_cvg}. It turns out that by following closely the proof from \cite{KemppainenSmirnovRandomCurves}, there are no real difficulties on the way for this first item.  
\item Then let us consider the exploration path in the radial case. Fix a domain $\Omega$, a boundary point $a$ and an interior point $z$. Let $(\Omega^{\delta}; z^{\delta}, a^{\delta})$ be a sequence of discrete domains on $\delta\Z^2$ converging to $(\Omega; z, a)$ in the Carath\'{e}odory sense: fix the conformal map $\phi: (\Omega; z, a)\to (\U; 0, 1)$ and $\phi^{\delta}: (\Omega^{\delta}; z^{\delta}, a^{\delta})\to (\U; 0, 1)$ so that $\phi^{\delta}\to\phi$ as $\delta\to 0$ uniformly on compact subset of $\Omega$. Consider the FK-Ising model on $\Omega^{\delta}$ with full free boundary condition. Suppose $\gamma^{\delta}$ is the exploration path from $a^{\delta}$ to $z^{\delta}$ defined as follows. Starting from $a^{\delta}$, cut open the nearest loop and follow the loop counter-clockwise until one of the following three cases happens: (1) the path reaches the target; (2) the connected component containing the target in the complement of the path has fully wired boundary condition; (3)the path arrives at a point which is disconnected from the target. If case (1) or (2) happens, the path stops. If case (3) happens, cut open the loop nearest to the current position and follow the new loop counter-clockwise until one the three cases happens, and repeat the same strategy. Continue in this way until either case (1) or (2) happens, and when it happens, the path stops and we denote this time by $\tau^{\delta}_{ccw}$. Note that the exploration path continues so that it is not disconnected from the target and that primal cluster is on the left and dual cluster is on the right as long as it is possible. When it is not possible, the path continues so that primal cluster is on the left. See Fig.~\ref{fig::fkinterface_radial}.
Suppose $\delta_n\to 0$, and denote by $\LA^n=\LA^{\delta_n}$ for $\LA=\Omega, z, a, \phi, \gamma, \tau$ and denote $\eta^n=\phi^{n}(\gamma^n)$. 
Then, one proceeds exactly as in the chordal case with stopping times $T^{n,\eps}_k, S^{n,\eps}_k$ etc.

\item 
As both $\{\gamma^n\}$ and $\{\eta^n\}$ are tight, one can extract subsequence, which we still denote by $\{\gamma^n\}$ and $\{\eta^n\}$, such that $W^n$ converges in distribution to $W$ and $\eta^n$ converges in distribution to $\eta$ locally uniformly. We couple them on the same probability space so that they converge almost surely. 
Here, one needs to show that for the coupled interfaces $(\eta^n,\eta)$ one has $T_k^{n,\eps}\to T_k^{\eps}$, $S_k^{n,\eps}\to S_k^{\eps}$, and $\tau_{ccw}^n \to \tau_{ccw}$ in probability. As explained above, this step requires some care as the use of a stopping time for the continuous curve $\eta$ will ruin the spatial Markov property for $\eta^n$.
The techniques we used in the chordal case (see Proposition~\ref{pr.ST}) work in the same fashion in the radial setting except one needs to add the following step:

\item

Similarly as in Proposition \ref{pr.thetan}, we need to show in the radial setting the uniform convergence of discrete harmonic measures $\theta^n$. More precisely, 
let $\eta^n=\phi^n(\gamma^n)$ and $\eta=\phi(\gamma)$ be the radial curves conformally mapped into $\U$ and parametrised by their disk--capacity. From the analog of Theorem  \ref{thm::chordal_loewner_cvg} mentioned in item 1., we get that $\eta^n\to \eta$ and $W^n\to W$ locally uniformly. 
Let $t\mapsto \theta^n(t)$  (resp $\theta(t)$) denote the harmonic measure of the wired arc on the left of $\eta^n[0,t]$ (resp. $\eta[0,t]$). With these notations, we need to show that for any fixed $T>0$,
\begin{align}\label{}
\| \theta^n  - \theta \|_{\infty, T} = \sup_{t\in [0,T]} |\theta^n(t) - \theta(t)| \to 0\,.
\end{align}
A slightly different proof as the one we used in Section \ref{s.RHM} is needed here, as the proof in Section \ref{s.RHM} relies  specifically on the geometry of $\H$. One possible way to proceed is to divide the proof in the following two steps:
\begin{enumerate}
\item[a)] Equicontinuity of the family $\{\theta^n\}_n$. As pointed out to us by Avelio Sep\'ulveda, one can obtain the equicontinuity of $\{\theta^n\}_n$ by relying on $\eta^n \to \eta$ locally uniformly together with some harmonic measure considerations. Indeed $\eta^n \to \eta$ locally uniformly implies that for any $T>0$ and $r>0$, there exists $\delta>0$, s.t. for $n$ large enough, $\eta^n([t,t+\delta])$ remain inside the ball $B(\eta^n(t),r)$ for any $t\in[0,T]$. Together with some easy harmonic measure estimates, this implies the equicontinuity of $\{\theta^n\}_n$.  As such it reduces the question to the following second step.

\item[b)] 
Pointwise convergence of $\theta^n \to \theta$. Let us then fix some $t\geq 0$. Consider the $\delta$-neighborhood $O_n^\delta$ of $\eta^n([0,t])$. Let $F_n^\delta \subset \p O_n^\delta$ be the closed set of points on the boundary of $O_n^\delta$ which are at geodesic-distance $\delta$ measured in $\U\setminus \eta^n([0,t])$ from the wired arc of $\eta^n([0,t])$ and which are at Euclidean distance at least $\delta^{1/100}$ from the tip $\eta^n(t)$ as well as from the current force point (last disconnection vertex). We claim that with high probability (as $\delta\to0$), all points in $F_n^\delta$ are at a geodesic-distance at least $\delta^{1/2}$ measured in $\U\setminus \eta^n([0,t])$ from the free arc of $\eta^n([0,t])$: otherwise, one could find a six-arm event for the FK percolation (three-arm event if near the boundary of $\p \U$), in an annulus of inner radius $\delta^{1/2}$ and outer radius $\delta^{1/100}$. This can be shown to be of vanishing probability (vanishing in $\delta \to 0$, uniformly in $n$) using the fact that the six-arm exponent for critical FK-Ising percolation is $>2$. See for example \cite[Lemma~6.1]{CamiaNewmanPercolationFull}. Now, similarly as in Section \ref{s.RHM}, one can use Beurling's estimate to claim that once a Brownian motion in $\U \setminus O_n^\delta$ started at 0 will hit the set $F_n^\delta$, it will hit with very high probability the wired arcs of $\eta^n([0,t])$ as well as $\eta([0,t])$ before intersecting the respective free arcs. One concludes by some easy harmonic measure considerations for the balls of radius $\delta^{1/100}$ around the tip and the force point. More topological details are certainly needed to turn this sketch into a formal proof.

\end{enumerate}

\item As in the chordal case, one needs to justify limits of conditional expectations arising after stopping times. (I.e. the fact the spatial Markov property passes to the scaling limit definitively requires some justification). This step can be handled similarly as in Subsection~\ref{subsec::cvg_oneBesselexcursion}.

\item Finally, one can extract the radial Loewner driving function $W$ from the angle $\theta$ evolving like a $\mathrm{cotan}$-Bessel process on $[0,2\pi]$ by relying on a suitable radial analog of Lemma~\ref{lem::chordal_evolution_boundary_point}.  It is not so immediate to generalize Lemma~\ref{lem::chordal_evolution_boundary_point} to the radial setting, because the assumption $\Leb(\eta\cap\R)=0$ does not suit the radial setting. One possible way is to compromise to an almost sure conclusion (instead of deterministic conclusion): in the chordal setting, one replaces the requirement $\Leb(\eta\cap\R)=0$ by Condition C2, since $\Leb(\eta\cap\R)=0$ holds almost surely under Condition C2 (see Lemma~\ref{lem::processvsrhm}), then the conclusion holds almost surely. This compromised version of Lemma~\ref{lem::chordal_evolution_boundary_point} in the chordal setting is easily generalized to the radial setting.

\end{enumerate}

\subsection{On Onsager's one-arm exponent (equal to $1/8$)}\label{ss.onsager}

We denote by $\mu_{\Lambda_n, \beta_c}^{\oplus}$ the probability measure of the spin-Ising model on $\Lambda_n=[-n,n]^2\subset\Z^2$ with $\oplus$ boundary condition and the critical inverse temperature $\beta_c$. In this section, we will discuss the decay of the expectation of the spin at the origin $\sigma_0$:  as $n\to\infty$,
\begin{equation}\label{eqn::onsager}
\mu_{\Lambda_n, \beta_c}^{\oplus}[\sigma_0]\asymp n^{-\frac{1}{8}}.
\end{equation}
The exponent $1/8$ is one half of $1/4$ which is the power in the decay of the two-point correlation function in the Ising model. 
The history of two-point correlation function dates back to Onsager in 1940s. 
In \cite{McCoyWu}, McCoy and Wu computed many important quantities of the Ising model including the critical exponent $1/4$ in two-point correlation function. 
In \cite{ChelkakHonglerLzyurovConformalInvarianceCorrelationIsing}, the authors derived~\eqref{eqn::onsager} using holomorphic spinor observables and obtained $\lim_n n^{\frac{1}{8}}\mu_{\Lambda_n, \beta_c}^{\oplus}[\sigma_0]$ and its generalizations. See also the useful reference \cite{ChelkakECM}.

By Edwards-Sokal coupling, the expectation in~\eqref{eqn::onsager} is related to the crossing probability in FK-Ising model: 
\[\mu_{\Lambda_n, \beta_c}^{\oplus}[\sigma_0]=\FK{\Lambda_n, p_c(2)}{\mathrm{wired}}{0 \leftrightarrow \p \Lambda_n}. \]
In this section, we briefly outline another derivation of 1/8 assuming the convergence of FK-Ising interface to radial $\SLE_{16/3}(16/3-6)$ (i.e. assuming item 4 above).

First let us point out that the convergence to radial $\SLE_{16/3}(16/3-6)$ only implies a weaker result than Onsager's one: similarly to the one-arm exponent for critical percolation \cite{LawlerSchrammWernerOneArmExponent}, it implies that as $n\to \infty$, 
\begin{align*}\label{}
\FK{\Lambda_n, p_c(2)}{\mathrm{wired}}{0 \leftrightarrow \p \Lambda_n} = n^{-\frac 1 8 +o(1)}\,,
\end{align*}
while there is no sub-polynomial $o(1)$ correction in Onsager's result. The main steps to prove this are as follows:
\begin{enumerate}
\item A computation of the exponent $1/8$ on the continuous level. This corresponds to the one-arm exponent $\tilde{\alpha}_1$ of radial $\SLE_{\kappa}(\kappa-6)$ which was calculated in \cite{SchrammSheffieldWilsonConformalRadii}: 
\[\tilde{\alpha}_1=\frac{(8-\kappa)(3\kappa-8)}{32\kappa}.\]
Note that $\tilde{\alpha}_1=1/8$ when $\kappa=16/3$. 

\item Second, one needs to carefully argue that this continuous one-arm exponent $\tilde{\alpha}_1$ is the same as the exponent describing the crossing probability, for critical FK-Ising percolation, of large macroscopic annuli $\Lambda_n \setminus \Lambda_{rn}$ as $n\to \infty$ and $r\in (0,1)$. This step can be made rigorous in essentially two steps: a) first by showing similarly as in Proposition~\ref{pr.ST} that the discrete disconnection times for the radial exploration process converge to the continuous ones. 
And b) by showing via some separation types of lemmas that the probability of not disconnecting (i.e. $\theta$ not reaching 0) is up to constant the same as connecting $\p \Lambda_n$ to $\p \Lambda_{rn}$. 

\item Finally as for critical percolation ($q=1$), one relies on the \textbf{quasi-multiplicativity} of the discrete one-arm event to conclude (see \cite{LawlerSchrammWernerOneArmExponent} or \cite[Section~4.2]{SmirnovWernerCriticalExponents} in the case $q=1$).
In fact this quasi-multiplicativity of the one-arm event is even rigorously known for all critical random-cluster models with $q\in [1,4]$ thanks to the recent Russo-Seymour-Welsh Theorem proved in \cite[Theorem~7]{DuminilSidoraviciusTassionContinuityPhaseTransition}.

\end{enumerate}

We end this article with a discussion on how to apply our approach to other lattice models. The goal of this approach is to pass the convergence of interface with Dobrushin boundary condition to the convergence with other boundary conditions in local uniform topology on curves (as in Section~\ref{subsec::cvg_curves_chordal}). The approach requires two main ingredients: 
\begin{enumerate}
\item[(a)] the convergence of interface with Dobrushin boundary condition to $\SLE_{\kappa}$;
\item[(b)] the RSW theorem in the discrete model which enables us to apply the topological tool developed in~\cite{KemppainenSmirnovRandomCurves}. 
\end{enumerate}
So far, item (a) is known for the following models: loop-erased random walk (LERW) and uniform spanning tree (UST) \cite{LawlerSchrammWernerLERWUST}; Ising and FK-Ising \cite{CDCHKSConvergenceIsingSLE}; level lines in discrete Gaussian free field (GFF) \cite{SchrammSheffieldDiscreteGFF} and percolation \cite{CamiaNewmanPercolation}. Among these models, item (b) is known for LERW, Ising and FK-Ising, and percolation, and it fails for UST, see~\cite[Section~4]{KemppainenSmirnovRandomCurves}. For level lines in discrete GFF, the RSW crossing estimate is believed to hold, especially thanks to the recent works by Titus Lupu (see for example \cite{lupu})  yet it is not written anywhere to our knowledge. For instance, the convergence of the level line with Dobrushin boundary condition was proved in \cite{SchrammSheffieldDiscreteGFF}; however, in the same article, the authors derived the convergence of the level line with zero boundary condition in the sense of driving function, not in the sense of the convergence in curves, precisely due to the missing of such RSW estimates.

\newcommand{\etalchar}[1]{$^{#1}$}

\end{document}